\newtheorem{theorem}{Theorem}[section]
\newtheorem{remark}{Remark}[section]
\newtheorem{lemma}[theorem]{Lemma}
\def\XXint#1#2#3{{\setbox0=\hbox{$#1{#2#3}{%
\int}$ }
\vcenter{\hbox{$#2#3$ }}\kern-.6\wd0}}
\def\edc{\end{document}}
\numberwithin{equation}{section}
\date{\today}
\begin{document}

\title{\bf Global well-posedness of the nonhomogeneous incompressible Navier-Stokes-Cahn-Hilliard system with Landau Potential
}
\author{Nie, Rui\thanks{E-mail: 202220798@stumail.nwu.edu.cn}\ ; Fang, Li\thanks{E-mail: fangli@nwu.edu.cn}\thanks{Corresponding author} \\
\textit{\small Department of Mathematics and CNS, Northwest University, Xi'an, P. R. China}\\
Guo, Zhenhua\thanks{zhguo@nwu.edu.cn}\\
          \textit{\small School of Mathematics and Information, Guangxi University, Nanning 530004, China.}
}
\date{}
\maketitle
\begin{abstract}
A diffuse-interface model that describes the dynamics of nonhomogeneous incompressible two-phase viscous
flows is investigated in a bounded smooth domain in ${\mathbb R}^3.$ The dynamics of the state variables is described by the nonhomogeneous incompressible Navier-Stokes-Cahn-Hilliard system.  We first give a blow-up criterion of local strong solution to the initial-boundary-value problem for the case of initial density away from zero. After establishing some key a priori with the help of the Landau Potential, we obtain the global existence and decay-in-time of strong solution, provided that the initial date
$\|\nabla u_0\|_{L^{2}(\Omega)}+\|\nabla \mu_0\|_{L^{2}(\Omega)}+\rho_0$ is suitably small.
\vspace{4mm}\\
\noindent{\textbf{Keywords:} Navier-Stokes-Cahn-Hilliard system, Blow-up criterion,  Global strong solutions, Initial-boundary-value problem, Landau potential}

\noindent{\textbf{AMS Subject Classification (2020):} 35A01, 35Q35,76D03}
\end{abstract}

\section{Introduction}

The Navier-Stokes equations for the nonhomogeneous incompressible fluid  coupled nonlinearly with the convective Cahn-Hilliard equation for the order parameter, describes the motion of an isothermal mixture of two immiscible and nonhomogeneous incompressible fluids subject to phase separation. The order parameter represents the difference of the relative concentrations of the two fluids. The model is known as  Navier-Stokes-Cahn-Hilliard system (see \cite{2008-FE,1998-AMW,Gurtin-1996,Heida-2013,Heida-2012}). In this system, the fluid's velocity field and phase field (concentration field) are mutually coupled, the flow of the fluid affects the evolution of the phase, while changes in the phase influence the movement of the fluid. The primary research objective is to simulate and predict fluid and phase transition phenomena in real physical systems, such as bubble formation and droplet separation. Overall, studying the Navier-Stokes-Cahn-Hilliard system not only helps to deepen the understanding of complex two-phase flow phenomena but also advances the development of application technologies in related fields. One of the nonhomogeneous incompressible  Navier-Stokes-Cahn-Hilliard system is the following
\begin{equation}\label{1.1}
	\begin{cases}
		\rho_t+u\cdot\nabla \rho=0,\\
		\rho u_t +\rho(u\cdot\nabla )u-\operatorname{div}(v(\phi)\mathbb{D}u)+\nabla  p=-\operatorname{div}(\nabla \phi\otimes\nabla \phi),\\
		\operatorname{div}u=0,\\
		\rho \phi_t +\rho u\cdot\nabla \phi=\Delta \mu,\\
		\rho\mu=-\Delta \phi+\rho\Psi^{\prime}(\phi)
	\end{cases}
\end{equation}
in $\Omega\times (0,\infty).$ Here, $\Omega$ is a bounded domain in ${\mathbb R}^3$ with a sufficiently smooth boundary. The unknown function $\rho=\rho(x,t),$ $u=u(x,t)$ and $p=p(x,t)$ denote the density, velocity and pressure of the mixture, respectively. $\phi=\phi(x,t)$ and $\mu=\mu(x,t)$ stand for the difference of fluids concentrations and the chemical potential, respectively.
The quantity $v(\phi)$ denotes the kinematic viscosity of the fluid. As usual, the symmetice $\mathbb{D}u$ denotes the deformation tensor, i.e.
\begin{equation*}
	\mathbb{D}u=\frac12[\nabla u+(\nabla u)^t].
\end{equation*}
The term $\nabla \phi$ is known as Korteweg force. The physical-relevant assumption on the function $\Psi$ is that it must have a double-well structure, each of them representing the two phases of the fluid. Physician \cite{Chella-1996,Doi-1997,Gunton-1983} often propose to consider either the logarithmic potential introduced by Cahn and Hilliardin in \cite{1958-Cahn}
$$\Psi(s) = \frac{\theta}{2}[(1+s)\ln(1+s)+(1-s)\ln(1-s)]-\frac{\theta_0}{2} \ \ \ (\forall s\in[-1,1]),$$
where $0<\theta<\theta_0,$ or the Landau potential deviated by Landau and Lifshitz in  \cite{1968-Landau}
\begin{equation}\label{1.2}
	\Psi(s)=\frac{1}{4}(s^{2}-1)^{2} \ \ \  (\forall s\in \mathbb{R}).
\end{equation}
We consider the system \eqref{1.1}-\eqref{1.2} subjected to the classical boundary conditions
\begin{equation}\label{1.3}
	u=0, \ \ \partial_n\mu=\partial_n\phi=0 \ \ \ \rm{on} \ \ \partial\Omega\times(0,T),
\end{equation}
and the initial conditions
\begin{equation}\label{1.4}
	\rho|_{t=0}=\rho_0(x), \ \ \rho u|_{t=0}=m_0(x), \ \ \phi|_{t=0}=\phi_0(x) \ \ \ \rm{in} \ \ \Omega.
\end{equation}
Here, $n$ is the unit outward normal vector on $\partial\Omega,$ and $\partial_n$ denotes the outer normal derivative on $\partial\Omega.$

The important progresses have been made on the well-posedness and dynamic behaviors of the solutions to the Navier-Stokes-Cahn-Hilliard system \eqref{1.1} (see \cite{2009-AH,Abels-2009,2012-AH,2013-AH15,2008-FE,Boyer-1999,2019-CL,Chen-He-Mei-Shi-2018}) and the references therein. For the logarithmic potential, Abels investigated the incompressible Navier-Stokes-Cahn-Hilliard system in a smooth bounded domain with unmatched viscosities firstly in \cite{Abels-2009}, where the existence of global weak (physical) solutions and the existence of strong solutions (global in two dimensions and local in three dimensions) have been given. Later, Gui and Li proved the global well-posedness of the Cauchy problem of the two-dimension incompressible Navier-Stokes-Cahn-Hilliard system with periodic domain by using energy estimates and the logarithmic Sobolev inequality in \cite{Gui-2018}.  Giorgini, Miranville and Temam \cite{Giorgini-2019}  showed the uniqueness of weak solutions and the existence and uniqueness of global strong solutions to the incompressible Navier-Stokes-Cahn-Hilliard system in a two-dimensional  bounded smooth domain, where the initial velocity satisfied some condition. 
The author considered a singular potential like the logarithmic potential in \cite{He-2021}, which is physically relevant for applications in materials science. By a semi-Galerkin approach, she proved the existence of global weak solutions in a two-dimension/three-dimension smooth bounded domain as well as the uniqueness of global weak solutions in a two-dimension smooth bounded domain. In \cite{He-Wu-2021}, He and Wu established the global strong well-posedness for arbitrarily large and regular initial data in the two-dimension smooth bounded domain.

For the case of Landau potential, we refer the reader to \cite{Boyer-1999,Cao-2012,Gal-2010,Gal-2016,Giorgini-2019}.
Under the condition of allowing diffusion coefficient degradation, Boyer proved the existence and uniqueness of solutions to the incompressible Navier-Stokes-Cahn-Hilliard system under shear through order parameter formulation for two dimensions and three dimensions in \cite{Boyer-1999}.
Cao and Gal obtained the global regularity of strong solutions for the Navier-Stokes-Cahn-Hilliard system in two dimension with mixed partial viscosity and mobility in \cite{Cao-2012}, where the authors also proved the global existence and uniqueness of classical solutions.
In \cite{Lam-Wu-2018}, Lam and Wu derived a class of thermodynamically consistent Navier-Stokes-Cahn-Hilliard system for two-phase fluid flows with density contrast, based on a volume averaged velocity. Moreover, the authors proved the existence of global weak solutions to the incompressible Navier-Stokes-Cahn-Hilliard system on a smooth bounded domain in two and three dimensions, and the existence of a unique global strong solution in two dimensions under some technical assumption on the coefficients.

Recently, Giorgini and Temam \cite{Giorgini-2020} considered the case of initial density away from zero and concentration-depending viscosity with free energy potential equal to either the Landau potential or the logarithmic potential. The authors proved the existence of weak solutions to the nonhomogeneous incompressible Navier-Stokes-Cahn-Hilliard system in a two-dimension/three-dimension bounded smooth domain. Moreover, they showed that the existence of strong solutions are local-in-time in three dimensions and global-in-time in two dimensions with bounded and strictly positive density. The question is whether such solutions can be globally well defined. The motivation of this paper is to establish a blow-up criterion for such strong solution only in terms of the velocity and the global well-posedness of the strong solutions for the initial-boundary problem \eqref{1.1}-\eqref{1.4} in a bounded smooth domain in ${\mathbb R}^3,$ then we characterize the long-time behavior of its global strong solutions as $t\rightarrow \infty.$

There are several works (\cite{Cho-2004,Choe-2003,Fan-2008,Huang-2009,Huang-2009-1,Huang-2011,Huang-2010,Sun-Zhang-2018}) on the blow-up criteria for local strong solutions to the compressible Navier-Stokes equations. Fan and Jiang proved in \cite{Fan-2008} for the two-dimension case, if $7\mu>9\lambda,$ then
$$\lim\limits_{T\rightarrow T^*}\big(\sup\limits_{0\leqslant t\leqslant T}\|\rho\|_{L^\infty(\Omega)}+\int_0^T(\|\rho\|_{W^{1,q_0}(\Omega)}+\|\nabla\rho\|_{L^2(\Omega)}^4)   dt\big)=\infty,$$
where $T^*<\infty$ is the maximal time of existence of strong solutions and $q_0>3$ is a constant. Later, Huang \cite{Huang-2009} established a blow-up criterion for the strong solution to compressible Navier-Stokes equations in three dimension under the extra condition on the viscosity coefficients of $7\mu>\lambda$, that is
\begin{equation}\label{1.5}
	\lim\limits_{T\rightarrow T^*}\int_0^T\|\nabla u\|_{L^\infty(\Omega)}   dt=\infty,\end{equation}
which was analogous to the Beal-Kato-Majda criterion for the ideal incompressible flows in \cite{Beal-1984}. For the initial density away from vacuum, that is
$$\inf\limits_{x\in{\mathbb R}^3}\rho_0(x)>0.$$
Huang and Li removed the crucial condition $7\mu>\lambda$ in \cite{Huang-2009-1} and established a blow-up criterion \eqref{1.5} under the physical restrictions $\mu>0$ and $\lambda+\frac{2}{3}\mu\geqslant 0.$ Moreover, Huang, Li and Xin  improved the result to the following blow-up criterion
$$\lim\limits_{T\rightarrow T^*}\int_0^T\|{\mathbb D} u\|_{L^\infty(\Omega)}dt=\infty$$
by allowing vacuum states initially in \cite{Huang-2011}. Especially, Huang and Li and Xin \cite{Huang-2010} proved the following blow-up criterion to the compressible Navier-Stokes equations
\begin{equation}\label{1.5-1}
	\lim\limits_{T\rightarrow T^{*}}
	\left(\|\operatorname{div}u\|_{L^{1}(0,T;L^{\infty}(\mathbb{R}^{3}))}+\|\rho^{\frac{1}{2}}u\|_{L^{s}(0,T;L^{r}(\mathbb{R}^{3}))}\right)=\infty
\end{equation}
and
\begin{equation}\label{1.5-2}
	\lim\limits_{T\rightarrow
		T^{*}}\left(\|\rho\|_{L^{\infty}(0,T;L^{\infty}(\mathbb{R}^{3}))}+\|\rho^{\frac{1}{2}}u\|_{L^{s}(0,T;L^{r}(\mathbb{R}^{3}))}\right)=\infty,
\end{equation}
with $\frac{2}{s}+\frac{3}{r}\leqslant 1$ and $r$ satisfies $3<r\leqslant\infty.$ Later, Sun and Zhang established the blow-up mechanism of local strong solutions to  compressible Navier-Stokes(-Fourier) system.

Motivated by \cite{Huang-2010}, we give a blow-up criterion for the initial-boundary problem \eqref{1.1}-\eqref{1.4} at first. Before we introduce our main result, let us summarize the principle hypotheses and the notations used through out this paper. We suppose that $\Omega\subset {\mathbb R}^3$ is a bounded domain with $C^3-$boundary. The viscosity $v=v(s)\in W^{1,\infty}({\mathbb R})$ is such that
\begin{equation}\label{1.8}
	0<v_*\leqslant  v(s)\leqslant  v^{*}<\infty \mbox{ for all } s\in {\mathbb R},
\end{equation}
where $v_*$ and $v^{*}$ are two strictly positive values. We denote by $C_{0,\sigma}^{\infty}(\Omega)$ the space of divergence free vector fields in $C_0^{\infty}(\Omega)$. We define $H_{\sigma}(\Omega)$ and $V_{\sigma}(\Omega)$ as the closure of $C_{0,\sigma}^{\infty}(\Omega)$ with respect to the $L^{2}(\Omega)$ and $H_{0}^{1}(\Omega)$ norms, respectively. We also use $(\cdot,\cdot)$ and $\|\cdot\|_{L^{2}(\Omega)}$ for the inner product and norm in $H_{\sigma}(\Omega)$ . The space $V_{\sigma}(\Omega)$ is endowed with the inner product and norm $(u,v)_{V_{\sigma}(\Omega)}=(\nabla u,\nabla v)$ and $\|u\|_{V_{\sigma}(\Omega)}=\|\nabla u\|_{L^{2}(\Omega)}.$ 

The blow-up criterion for the local strong solution to the initial-boundary problem \eqref{1.1}-\eqref{1.4} is stated as follows.

\begin{theorem}\label{thm 1.1}
	Let $\Omega$ be a bounded smooth domain in $\mathbb{R}^{3}$ with boundary $\partial \Omega$ uniformly of class $C^3.$ Assume that
	the hypothesi \eqref{1.8} is satisfied and the initial value $(\rho_0,u_0,\phi_0,\mu_0)$ satisfies
	\begin{equation}\label{1.9}
		\begin{cases}
			0<\rho_*\leqslant \rho_0\leqslant \rho^{*}<\infty, \ u_0\in V_{\sigma}(\Omega), \ \phi_0\in H^{2}(\Omega), \\
			\mu_0=-\frac{\Delta \phi_0}{\rho_0}+\Psi^\prime(\phi_0)\in H^{1}(\Omega),\ \partial_n\phi_0=0\mbox{ on }\partial\Omega.
		\end{cases}
	\end{equation}
	Let  $(\rho,u,\phi,\mu)$ be a strong solution to the initial-boundary problem \eqref{1.1}-\eqref{1.4} on $[0,T^*]$ satisfying
	\begin{equation}\label{1.10}
		\begin{cases}
			\rho\in \mathnormal{C}([0,T^*];L^{q}(\Omega))\cap L^{\infty}(\Omega\times (0,T^*))\cap L^{\infty}(0,T^*;H^{-1}(\Omega)),\\
			u\in \mathnormal{C}([0,T^*];V_\sigma(\Omega))\cap L^{2}(0,T^*;H^{2}(\Omega))\cap H^{1}(0,T^*;H_\sigma(\Omega)),\\
			\phi\in \mathnormal{C}([0,T^*];(W^{2,6}(\Omega))_{w})\cap H^{1}(0,T^*;H^{1}(\Omega)),\\
			\mu\in L^{\infty}(0,T^*;H^{1}(\Omega))\cap L^{2}(0,T^*;W^{2,6}(\Omega))
		\end{cases}
	\end{equation}
	for any $q\in[2,\infty],$ where $T^{*}\in(0,+\infty)$ is the maximum time of existence. Then
	\begin{equation}\label{1.11}
		\lim_{T\to T^{*}}\|u\|_{L^{\frac{4r}{r-6}}(0,T;L^{r}(\Omega))}=\infty
	\end{equation}
	for any $r>6.$
\end{theorem}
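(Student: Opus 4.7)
The plan is to argue by contradiction. Assume there exists $M_0<\infty$ with $\sup_{T<T^*}\|u\|_{L^{s_r}(0,T;L^r(\Omega))}\le M_0$, where $s_r:=\frac{4r}{r-6}$ satisfies the scaling identity $\frac{2}{s_r}+\frac{3}{r}=\frac12$. I will show that under this bound every norm appearing in the regularity class \eqref{1.10} stays finite as $t\uparrow T^*$. The local existence theorem applied at $t=T^*$ would then continue the solution strictly past $T^*$, contradicting the maximality of $T^*$ and forcing \eqref{1.11}.

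The easy estimates come first. Since $\operatorname{div}u=0$, the transport equation preserves pointwise bounds on $\rho$, so $\rho_*\le \rho\le \rho^*$ on $[0,T^*)$. Testing the momentum equation against $u$ and coupling with the Cahn--Hilliard pair via $\mu$ and $\phi_t$ produces the usual dissipation identity whose left-hand side controls $\tfrac12\|\sqrt\rho\,u\|_{L^2}^2+\tfrac12\|\nabla\phi\|_{L^2}^2+\int_\Omega \rho\Psi(\phi)$ with dissipation $\int v(\phi)|\D u|^2+\|\nabla\mu\|_{L^2}^2$; the Landau potential enters nonnegatively. This yields $u\in L^\infty(L^2)\cap L^2(V_\sigma)$, $\phi\in L^\infty(H^1)\cap L^\infty(L^4)$, $\nabla\mu\in L^2(L^2)$, and, after using the mean-value constraint, $\mu\in L^2(H^1)$.

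The heart of the argument is to test the momentum equation with $u_t$. All terms but the convective one are handled by the viscosity coercivity, the transport bound on $\rho$, and the stationary Stokes estimate applied to $-\operatorname{div}(v(\phi)\D u)+\nabla p=-\rho u_t-\rho(u\cdot\nabla)u-\operatorname{div}(\nabla\phi\otimes\nabla\phi)$; convection is the term that forces the $L^{s_r}(L^r)$ hypothesis. By H\"older and Gagliardo--Nirenberg,
\[
\Bigl|\int_\Omega \rho(u\cdot\nabla)u\cdot u_t\Bigr|\le C\|\sqrt\rho\, u_t\|_{L^2}\|u\|_{L^r}\|\nabla u\|_{L^2}^{1-3/r}\|\nabla^2 u\|_{L^2}^{3/r}.
\]
Young's inequality with exponents $(2,\,2r/3,\,2r/(r-3))$ absorbs $\|\sqrt\rho\, u_t\|^2$ and $\|\nabla^2u\|^2$ into the left-hand side and leaves the Gronwall coefficient $g(t)=\|u(t)\|_{L^r}^{2r/(r-3)}\|\nabla u(t)\|_{L^2}^2$. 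Since $2r/(r-3)<s_r$, H\"older in time gives $\int_0^{T^*}\|u\|_{L^r}^{2r/(r-3)}dt\le C(T^*)M_0^{2r/(r-3)}$, and Gronwall then closes the estimate, producing $u\in L^\infty(0,T^*;V_\sigma)$, $\sqrt\rho\,u_t\in L^2(0,T^*;L^2)$, and $u\in L^2(0,T^*;H^2)$.

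The Cahn--Hilliard side is treated analogously. Differentiating $\rho\mu=-\Delta\phi+\rho\Psi'(\phi)$ in time and testing with $\mu_t$, together with testing the fourth equation of \eqref{1.1} appropriately, produces an evolution inequality for $\|\sqrt\rho\,\mu_t\|_{L^2}^2+\|\nabla\mu\|_{L^2}^2$; the coupling $\rho u\cdot\nabla\phi$ is controlled by the $u\in L^\infty(V_\sigma)$ bound just obtained, the cubic nonlinearity $\Psi'(\phi)=\phi^3-\phi$ is controlled by $\phi\in L^\infty(L^\infty)$ inherited from $\phi\in L^\infty(H^2)$ in three dimensions, and elliptic regularity for $-\Delta\phi=\rho(\mu-\Psi'(\phi))$ and $\Delta\mu=\rho\phi_t+\rho u\cdot\nabla\phi$ then lifts the bounds to $\phi\in L^\infty(0,T^*;(W^{2,6})_w)$ and $\mu\in L^\infty(0,T^*;H^1)\cap L^2(0,T^*;W^{2,6})$, exhausting the class \eqref{1.10}. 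The main obstacle is the circular coupling: the momentum estimate requires $\nabla\phi\otimes\nabla\phi\in H^1$ and hence $\phi\in H^2$, while the Cahn--Hilliard estimate requires $u\in V_\sigma$. My plan is to break the circle by working with the joint quantity $\|\nabla u\|_{L^2}^2+\|\nabla\mu\|_{L^2}^2+\|\sqrt\rho\,u_t\|_{L^2}^2+\|\sqrt\rho\,\mu_t\|_{L^2}^2$ in a single differential inequality whose driving coefficient, after all cross terms are absorbed via Young's inequality, reduces to the integrable function $t\mapsto\|u(t)\|_{L^r}^{2r/(r-3)}$.
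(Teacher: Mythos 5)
Your overall architecture (contradiction, density bound, basic energy estimate, testing the momentum equation with $u_t$, coupling with the Cahn--Hilliard part, Stokes regularity to convert $\|u\|_{H^2}$ into $\|\sqrt{\rho}u_t\|_{L^2}$, then Gronwall and continuation past $T^*$) matches the paper's strategy, and your treatment of the convective term is essentially the paper's estimate of $I_1$. But there is a genuine gap: you assert that \emph{only} convection needs the $L^{s_r}(L^r)$ hypothesis and that, after absorption, the Gronwall coefficient reduces to $\|u\|_{L^r}^{2r/(r-3)}$ (the Serrin exponent $\tfrac2s+\tfrac3r=1$). You never estimate the terms that are actually responsible for the exponent $\tfrac{4r}{r-6}$ in the statement: testing the momentum equation with $u_t$ produces $\int_\Omega v^{\prime}(\phi)\phi_t|\mathbb{D}u|^2\,dx$ from the concentration-dependent viscosity, and the Korteweg force and Cahn--Hilliard coupling generate the terms $\int\rho\Psi^{\prime\prime}(\phi)\phi_t\,u\cdot\nabla\phi$, $\int\rho\Psi^{\prime}(\phi)u\cdot\nabla\phi_t$, $\int\rho\phi_t\,u\cdot\nabla\mu$, $\int\rho\mu u\cdot\nabla\phi_t$, $\int\rho u\cdot\nabla\phi\,\phi_t$ (the paper's $I_4$--$I_7$, $I_{10}$, $I_{11}$). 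It is precisely $I_4$, estimated through $\|\phi_t\|_{L^6}\|\nabla u\|_{L^3}\|\nabla u\|_{L^2}$ with $\|\nabla u\|_{L^3}$ interpolated between $L^r$ and $H^2$, that forces the coefficient $\|u\|_{L^r}^{4r/(r-6)}$; the paper's Remark 1.1 explicitly states that replacing this by a Serrin-type norm appears impossible for this system. Your proposal provides no argument for these terms beyond "absorbed via Young's inequality", and the circular coupling you acknowledge in the last sentence is exactly where the work lies, so the claim that the driving coefficient is $\|u\|_{L^r}^{2r/(r-3)}$ is unsubstantiated (and would in fact be a strictly stronger theorem than the one stated).

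A second, more technical problem: your plan includes $\|\sqrt{\rho}\,\mu_t\|_{L^2}^2$ in the energy functional by differentiating $\rho\mu=-\Delta\phi+\rho\Psi^{\prime}(\phi)$ in time and testing with $\mu_t$. The regularity class \eqref{1.10} gives $\mu\in L^{\infty}(0,T^*;H^1)\cap L^2(0,T^*;W^{2,6})$ with no control of $\mu_t$, so this quantity is not known to be finite and cannot serve as part of the bootstrap without an additional approximation argument. The paper avoids this by testing $\eqref{1.1}_4$ with $\mu_t$ and immediately rewriting, via the quasi-stationary relation $\eqref{1.1}_5$ and the transport equation, so that the resulting functional is $\int\frac{v(\phi)}{2}|\mathbb{D}u|^2+\frac12\|\nabla\mu\|_{L^2}^2+\int\rho\mu\,u\cdot\nabla\phi$ and the dissipation consists only of $\|\nabla\phi_t\|_{L^2}^2$, $\|\sqrt{\rho}\phi_t\|_{L^2}^2$, $\|\sqrt{\rho}u_t\|_{L^2}^2$; the lower-order term $\int\rho\mu u\cdot\nabla\phi$ is then absorbed by coercivity, and the Gronwall coefficient $1+\|u\|_{L^r}^{4r/(r-6)}+\|\nabla u\|_{L^2}^2+\|\nabla\mu\|_{L^2}^2$ is integrable thanks to assumption \eqref{3.1} together with the basic energy inequality. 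To repair your argument you would need to adopt this (or an equivalent) structure and carry out the estimates of the viscosity and coupling terms, at which point the exponent $\tfrac{4r}{r-6}$ emerges naturally rather than $\tfrac{2r}{r-3}$.
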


\begin{remark}\label{rem-1}
	Compared with \eqref{1.5-1} and \eqref{1.5-2}, the blow-up criterion \eqref{1.11} for the nonhomogeneous incompressible  Navier-Stokes-Cahn-Hilliard system is slightly different in the following twofolds. On one hand, the blow-up criterion \eqref{1.11} is precise, compared to the Serrin's criterion on the Leray-Hopf weak solutions to the three-dimension incompressible Navier-Stokes equations. On the other hand, due to the difference of fluids concentrations and the consideration of the three-dimensional domain and the choice of the potential as the Landau potential $\eqref{1.2}$, a more advanced complexity estimation will be required during the computation process, therefore, it seems impossible to replace the blow-up criterion \eqref{1.11} by  $\|u\|_{L^{s}(0,T;L^{r}(\Omega))}$ with $\frac{2}{s}+\frac{3}{r}\leqslant 1$ and $3<r\leqslant \infty.$
\end{remark}
The next aim of this paper is to establish the global existence of strong solutions to the initial-boundary problem \eqref{1.1}-\eqref{1.4} under some smallness assumptions on the initial data. We prove the existence of global strong solutions to the initial-boundary problem \eqref{1.1}-\eqref{1.4} subject to regular initial data \eqref{1.9} satisfying some suitable bounded condition (see Theorem \ref{thm1.2} blow). The proof is based on the  regularity criterion and an abstract bootstrap argument.

\begin{theorem}\label{thm1.2}
	Let $\Omega$ be a bounded smooth domain in $\mathbb{R}^{3}$ with boundary $\partial \Omega$ uniformly of class $C^3$ and $T>0.$
	Suppose that the hypothesi \eqref{1.8} is satisfied and the initial value $(\rho_0,u_0,\phi_0,\mu_0)$ satisfies \eqref{1.9}. Assume that
	there is a positive number $\varepsilon_0$ depending only on $|\Omega|,v_*,v^*,\|\phi_0\|_{L^2(\Omega)}, \|\mu_0\|_{L^2(\Omega)},$ such that
	\begin{equation}\label{1.12}
		\|\nabla u_0\|_{L^{2}(\Omega)}+\|\nabla \mu_0\|_{L^{2}(\Omega)}+\rho_0 \leqslant \epsilon_0,
	\end{equation}
	Then, the initial-boundary problem \eqref{1.1}-\eqref{1.4} with initial value $(\rho_0,u_0,\phi_0,\mu_0)$, admits a unique global solution $(\rho,u,\phi,\mu)$ on $[0,T]$ satisfying \eqref{1.10}.	Moreover, the following time decay
	\begin{equation}\label{1.13}
		\int_{\Omega}\big(\frac{\rho u^{2}}{2}+\frac{\rho(\phi^{2}-1)^{2}}{4}+\frac{|\nabla \phi|^{2}}{2}\big) dx
		\leqslant C\epsilon_0 e^{-a_0t}+\frac{a_0}{4}\int_\Omega\rho_0dx
	\end{equation}
	holds for any $t\in [0,T]$, where  $a_0=\left(\max\{\frac{\sqrt{2}}{2v_*}, C_0\epsilon_0\}\epsilon_0\right)^{-1}$ and $C_0$ depends only on $|\Omega|$, $v_*$, $v^*$, $\|\mu_0\|_{L^{2}(\Omega)}$, $\|\phi_0\|_{L^{2}(\Omega)}.$
\end{theorem}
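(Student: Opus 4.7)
The strategy combines the blow-up criterion of Theorem~\ref{thm 1.1} with a continuation/bootstrap argument that exploits the smallness \eqref{1.12}. The local existence theory supplies a unique strong solution $(\rho,u,\phi,\mu)$ on a maximal interval $[0,T^*)$; it suffices to show that $\|u\|_{L^{4r/(r-6)}(0,T;L^r(\Omega))}$ remains finite as $T \uparrow T^*$ for some $r>6$, since then Theorem~\ref{thm 1.1} forces $T^* = +\infty$. The starting point is the basic energy identity
\begin{equation*}
\frac{d}{dt}E(t) + \int_\Omega v(\phi)|\mathbb{D}u|^2\,dx + \int_\Omega|\nabla\mu|^2\,dx = 0, \qquad E(t) := \int_\Omega\Bigl(\tfrac{\rho|u|^2}{2} + \tfrac{|\nabla\phi|^2}{2} + \rho\Psi(\phi)\Bigr)dx,
\end{equation*}
obtained by testing the momentum equation with $u$, the Cahn--Hilliard equations with $\mu$, rewriting the Korteweg force as $\operatorname{div}(\nabla\phi\otimes\nabla\phi) = \Delta\phi\,\nabla\phi + \nabla(|\nabla\phi|^2/2)$, and using $\rho\mu = -\Delta\phi + \rho\Psi'(\phi)$ together with the transport equation for $\rho$. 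Since $\Psi\geq 0$, this yields $E(t) \leq E(0)$ throughout $[0,T^*)$.

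Next, introduce the higher-order quantity $\Phi(t) := \|\nabla u(t)\|_{L^2}^2 + \|\nabla\mu(t)\|_{L^2}^2$. Testing the momentum equation with $u_t$, combined with elliptic regularity applied to $\Delta\phi = -\rho\mu + \rho\Psi'(\phi)$ (to bound $\|\phi\|_{H^2}$ and $\|\phi\|_{W^{2,6}}$ by $\|\mu\|_{H^1}$, the basic energy, and $\|\rho\|_{L^\infty}$) and Gagliardo--Nirenberg interpolation, yields an inequality of the schematic form
\begin{equation*}
\frac{d}{dt}\Phi(t) + c_1\bigl(\|\sqrt{\rho}\,u_t\|_{L^2}^2 + \|\Delta\mu\|_{L^2}^2\bigr) \leq C\bigl(\Phi(t)^{1+\delta_1} + \Phi(t)^{1+\delta_2}\bigr), \qquad \delta_i > 0.
\end{equation*}
Setting $T_0 := \sup\{t \in [0,T^*) : \Phi(s) \leq M\epsilon_0 \mbox{ for every } s \in [0,t]\}$ with $M$ chosen from the data, \eqref{1.12} forces $\Phi(0) \leq \epsilon_0^2 < M\epsilon_0$, so $T_0 > 0$; on $[0,T_0]$ the superlinear right-hand side is absorbed into a fraction of the dissipation, integrating to the strict improvement $\Phi(t) \leq M\epsilon_0/2$, so a standard continuity argument forces $T_0 = T^*$. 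Consequently $u \in L^\infty(0,T^*;V_\sigma(\Omega)) \cap L^2(0,T^*;H^2(\Omega))$, from which Sobolev embedding and interpolation bound $\|u\|_{L^{4r/(r-6)}(0,T;L^r)}$ uniformly in $T < T^*$, and Theorem~\ref{thm 1.1} yields $T^* = +\infty$.

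For the decay \eqref{1.13}, the transport equation preserves $\|\rho(t)\|_{L^\infty} \leq \|\rho_0\|_{L^\infty} \leq \epsilon_0$. Combining the basic energy identity with Korn's and Poincar\'e's inequalities (so that $\int_\Omega\rho|u|^2 \leq \|\rho\|_{L^\infty}\|u\|_{L^2}^2 \leq C_P\|\rho\|_{L^\infty}\|\nabla u\|_{L^2}^2$) and the elliptic estimate bounding $\|\nabla\phi\|_{L^2}^2$ and $\int_\Omega\rho\Psi(\phi)$ by $\|\nabla\mu\|_{L^2}^2$ modulo a mean-value remainder of size $\int_\Omega\rho_0$, one arrives at
\begin{equation*}
\frac{d}{dt}E(t) + a_0^{-1}\!\left(E(t) - \tfrac{a_0}{4}\int_\Omega\rho_0\,dx\right) \leq 0,
\end{equation*}
and Gronwall's inequality yields \eqref{1.13}. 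The principal obstacle lies in closing the bootstrap of the second step: the cubic Landau nonlinearity $\Psi'(\phi) = \phi^3 - \phi$, the density-weighted inertia $\rho u_t$, and the Korteweg force in three dimensions must be handled simultaneously, and one has to chain the elliptic regularity of $\phi$ from the $\mu$-equation with Gagliardo--Nirenberg interpolation so that the right-hand side of the $\Phi$-inequality is \emph{genuinely} superlinear in $\Phi$, allowing the smallness of $\epsilon_0$ to propagate forward in time.
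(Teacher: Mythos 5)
Your overall architecture (basic energy law, a higher-order functional built from $\|\nabla u\|_{L^2}^2+\|\nabla\mu\|_{L^2}^2$, a continuity/bootstrap argument exploiting \eqref{1.12}, and a Gronwall argument for the decay) matches the paper's, and routing globality through Theorem \ref{thm 1.1} instead of re-applying Lemma \ref{Existence} at the putative blow-up time is an acceptable variant. However, the core of the proof --- the closed differential inequality for $\Phi$ --- is only asserted, and its claimed form is not what the computation actually delivers. Testing the momentum equation with $u_t$ cannot produce $\frac{d}{dt}\|\nabla\mu\|_{L^2}^2$; to get that term one must test $\eqref{1.1}_4$ with $\mu_t$ (and, to exploit $\Psi''\geqslant-1$, also with $\phi_t$), which forces the modified functional $\int_\Omega\frac{v(\phi)}{2}|\mathbb{D}u|^2\,dx+\frac12\|\nabla\mu\|_{L^2}^2+\int_\Omega\rho\mu\,u\cdot\nabla\phi\,dx$ (whose equivalence with $\Phi$ must itself be checked using smallness), and generates the twelve cross terms $I_1$--$I_{12}$ of \eqref{3.19}. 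Among these, the term $-2\int_\Omega\nabla\mu\cdot\nabla\phi_t\,dx$ unavoidably leaves $C\|\nabla\mu\|_{L^2}^2$ with an $O(1)$ constant on the right-hand side: this is \emph{linear} in $\Phi$ and cannot be ``absorbed into a fraction of the dissipation,'' since the dissipation available at this level is $\|\nabla\phi_t\|_{L^2}^2+\|\sqrt{\rho}\phi_t\|_{L^2}^2+\|\sqrt{\rho}u_t\|_{L^2}^2$ (not $\|\Delta\mu\|_{L^2}^2$, and certainly not $\|\nabla\mu\|_{L^2}^2$). The actual closure is a Gronwall argument whose coefficients are time-integrable and small thanks to the basic energy law ($\int_0^T(\|\nabla u\|_{L^2}^2+\|\nabla\mu\|_{L^2}^2)\,dt\leqslant C\epsilon_0$), combined with a bootstrap on $\int_0^T\|\nabla u\|_{L^2}^4\,dt$; your ``genuinely superlinear right-hand side absorbed by dissipation'' mechanism, as stated, would fail.

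A second concrete gap concerns the initial data: both the smallness propagation and the prefactor $C\epsilon_0$ in \eqref{1.13} require $E(0)=\int_\Omega\bigl(\frac12\rho_0|u_0|^2+\frac12|\nabla\phi_0|^2+\rho_0\Psi(\phi_0)\bigr)dx\leqslant C\epsilon_0$, which does \emph{not} follow directly from \eqref{1.12} (that only controls $\|\nabla u_0\|_{L^2}$, $\|\nabla\mu_0\|_{L^2}$ and $\rho_0$). One must use the compatibility relation $\rho_0\mu_0=-\Delta\phi_0+\rho_0\Psi'(\phi_0)$, multiply by $\phi_0$ and integrate to get $\int_\Omega|\nabla\phi_0|^2\,dx+\int_\Omega\rho_0(\phi_0^2-1)\phi_0^2\,dx=\int_\Omega\rho_0\mu_0\phi_0\,dx$, and then exploit the smallness of $\rho_0$ together with $\|\mu_0\|_{L^2}$, $\|\phi_0\|_{L^2}$ to conclude $\int_\Omega\frac14\rho_0(\phi_0^2-1)^2\,dx+\int_\Omega\frac12|\nabla\phi_0|^2\,dx\leqslant C\epsilon_0$. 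Your proposal invokes $E(t)\leqslant E(0)$ but never establishes that $E(0)$ is small, so neither the starting point of the bootstrap (smallness of the time-integrated dissipation and of $\|\phi\|_{H^1}$) nor the constant in the decay estimate is justified as written.
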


\begin{remark}
	Compared with the previous known results on the global existence of strong solution \cite{2012-AH,2019-CL,Cao-2012,Chen-He-Mei-Shi-2018,Gal-2010,Gui-2018,Qiu-Tang-Wang-2023}, our Theorem \ref{thm1.2} successfully proves for the first time that, given sufficiently small initial conditions $\eqref{1.12}$, the local strong solution to the system $\eqref{1.1}$ in three dimensions can be extended to a global strong solution. Furthermore, it provides insights into the decay behavior of the solution to system $\eqref{1.1}$.
\end{remark}
\begin{remark}
	To the best of our knowledge, the decay-in-time property \eqref{1.13} in Theorem \ref{thm1.2} is proven under the condition of sufficiently small initial data $\eqref{1.12}$ and the choice of Landau potential $\eqref{1.2}$. Additionally, using the energy inequality, it is shown that the decay of the solution is exponential, and it also depends on the initial density, which is new and somewhat surprising, since the known corresponding decay-in-time rates for the strong solution to the incompressible Navier-Stokes-Cahn-Hilliard system are algebraic (see \cite{Gal-2016}).
\end{remark}
The structure of this paper is as follows. We summarize preliminary results in Section 2. Then we establish the blow-up criterion for the local  strong solutions to the initial-boundary problem \eqref{1.1}-\eqref{1.4} in Section 3. Section 4 is devoted to the Proof of Theorem \ref{thm1.2}.

\section{Preliminaries}

At first, we review some elementary inequalities and important lemmas that are used extensively in this paper. The well-known Gagliardo-Nirenberg inequality be used frequently in the later analysis \ (see $\cite{1968Linear}$).
\begin{lemma} \label{lem2-1}
	{\upshape(\textbf{Gagliardo-Nirenberg Inequality}).} Let $\Omega$ be a domain of $\mathbb{R}^{3}$ with smooth boundary $\partial\Omega$.
	Let the positive constants $j$ and $k$  satisfy $0\leqslant  j<k.$ 
	For $1\leqslant  p,r\leqslant\infty$, there exists some generic constants $C>0$ that may depend on $p$ and $r$ such that for any $u\in W_0^{k,p}(\Omega),$  we have
	$$\|D^{j}u\|_{L^{q}(\Omega)}\leqslant  C\|D^{k}u\|^{\theta}_{L^{p}(\Omega)}\|u\|^{1-\theta}_{L^{r}(\Omega)}$$
	where $\frac{j}{k}\leqslant  \theta\leqslant 1$ and $\frac{1}{q}=\frac{j}{n}+\theta(\frac{1}{p}-\frac{k}{n})+\frac{1-\theta}{r}.$
\end{lemma}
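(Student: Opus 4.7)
The plan is to establish this inequality by first reducing from the domain $\Omega$ to the whole space $\mathbb{R}^n$, and then to obtain the full scale-invariant family of interpolation bounds by combining the classical Sobolev embedding with H\"older interpolation between Lebesgue spaces. Because $u\in W^{k,p}_0(\Omega)$ vanishes on $\partial\Omega$ along with its normal derivatives up to order $k-1$ in the trace sense, I would take the zero-extension $\tilde u$ of $u$ outside $\Omega$ so that $\tilde u\in W^{k,p}(\mathbb{R}^n)$ with $\|D^j \tilde u\|_{L^q(\mathbb{R}^n)} = \|D^j u\|_{L^q(\Omega)}$ for $0\leqslant j\leqslant k$. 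Since $C_c^{\infty}(\mathbb{R}^n)$ is dense in $W^{k,p}(\mathbb{R}^n)$ for $1\leqslant p<\infty$, it is enough to prove the inequality for smooth compactly supported functions on $\mathbb{R}^n$ and then pass to the limit.

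I would then carry out the proof in three stages. \emph{Stage one:} settle the base case $k=1$, $j=0$, $r=p$ via the Sobolev inequality $\|u\|_{L^{p^*}(\mathbb{R}^n)}\leqslant C\|\nabla u\|_{L^p(\mathbb{R}^n)}$ with $p^*=np/(n-p)$; this is obtained for $p=1$ through the standard slicing/Loomis--Whitney argument and for general $1<p<n$ by applying the $p=1$ case to $|u|^{\gamma}$ for a judicious exponent $\gamma$. \emph{Stage two:} for $k=1$, $j=0$, and general $r$, interpolate between $L^r$ and $L^{p^*}$: by H\"older's inequality,
\begin{equation*}
\|u\|_{L^{q}(\mathbb{R}^n)} \leqslant \|u\|_{L^{r}(\mathbb{R}^n)}^{1-\theta}\,\|u\|_{L^{p^*}(\mathbb{R}^n)}^{\theta},
\qquad \tfrac{1}{q} = \tfrac{1-\theta}{r}+\tfrac{\theta}{p^*},
\end{equation*}
and inserting the Sobolev bound from Stage one yields exactly the inequality with the prescribed exponent. \emph{Stage three:} handle arbitrary $(j,k)$ by induction on $k-j$. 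Applying the $k=1$ result to $D^{j}u$ gives a bound in terms of $\|D^{j+1}u\|_{L^{p_1}}$ and $\|D^{j}u\|_{L^{r_1}}$ for a carefully chosen pair $(p_1,r_1)$; iterating and repeatedly combining the resulting estimates through logarithmic convexity of the $L^q$ norms eventually trades all intermediate derivatives for the single top-order norm $\|D^{k}u\|_{L^p}$ and the single bottom norm $\|u\|_{L^r}$.

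The technical heart of the argument is verifying that the exponent $\theta$ produced by the chain of H\"older and Sobolev estimates is precisely the one dictated by the scaling relation $\frac{1}{q}=\frac{j}{n}+\theta(\frac{1}{p}-\frac{k}{n})+\frac{1-\theta}{r}$, with $\theta$ consistently lying in $[j/k,1]$. I would ensure this by a dimensional audit: replacing $u(x)$ by $u_\lambda(x)=u(\lambda x)$ in both sides of the target inequality and demanding both sides to scale identically in $\lambda$ forces exactly this $\theta$, which also pins down the admissible range. The main obstacle, as usual, lies in the borderline or degenerate regimes: when $p\geqslant n$ the Sobolev exponent $p^*$ is unavailable, and when $\theta = j/k$ or $\theta = 1$ some steps of the induction become endpoints. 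These cases I would treat by interpolating through an auxiliary exponent $\tilde p<n$ with $\tilde p\to p$, by passing to the limit in the final inequality (the constants remain bounded since $\Omega$ is bounded and $|\Omega|$ appears only through a fixed factor), and by appealing to continuity in $\theta$ to recover the endpoints; this completes the proof.
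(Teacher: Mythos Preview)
The paper does not prove this lemma at all: it is stated as a classical result and attributed to \cite{1968Linear} (Lady\v{z}enskaja--Solonnikov--Ural'ceva) without argument. So there is no ``paper's own proof'' to compare against; your sketch is supplying content the authors deliberately omitted.

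As for the sketch itself, the overall architecture is the standard one and is sound in outline. A few points deserve more honesty, though. In Stage three your induction is the delicate part and you have glossed it: applying the $k=1$ case to $D^{j}u$ gives a bound of $\|D^{j}u\|$ in terms of $\|D^{j+1}u\|$ and $\|D^{j}u\|$ (or $\|D^{j-1}u\|$), not directly in terms of $\|D^{k}u\|$ and $\|u\|$. Eliminating the intermediate derivative norms is exactly where the work lies; the usual route is to first establish the pure $L^p$ interpolation $\|D^{j}u\|_{L^p}\leqslant C\|D^{k}u\|_{L^p}^{j/k}\|u\|_{L^p}^{1-j/k}$ (via the one-dimensional Kolmogorov/Landau inequality along lines, or via integration by parts for $j=1$, $k=2$ and then induction), and only afterwards feed in Sobolev and H\"older to reach general $(p,q,r)$. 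Your phrase ``logarithmic convexity of the $L^q$ norms'' does not by itself kill intermediate derivatives. Also, the endpoint discussion is too optimistic: for $p\geqslant n$ you cannot simply ``interpolate through $\tilde p<n$ with $\tilde p\to p$'' because the Sobolev constant blows up as $\tilde p\uparrow n$; one instead uses Morrey's embedding or the $BMO$/$L^\infty$ substitutes. Finally, note that the inequality as stated in the paper actually fails at certain exceptional endpoints (e.g.\ $k-j-n/p$ a nonnegative integer with $\theta\neq 1$), so a fully rigorous proof would have to exclude those---the paper's statement is slightly imprecise, as is common when this lemma is merely quoted.
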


Next, we introduce the following auxiliary result, which is given in \cite{2004Dynamics}.

\begin{lemma}\label{lem2-2}
	Let $v\in W^{1,2}(\Omega)$, and let $\rho$ be a non-negative function such that
	$$0<M\leqslant\int_{\Omega}\rho   dx, \ \ \int_{\Omega}\rho^{\gamma} dx\leqslant  E_0,$$
	where $\Omega\subset \mathbb{R}^{3}$ is a bounded domain, $M, E_0$ are positive constants, and $\gamma>1$.
	
	Then there exists a constant $c$ depending solely on $M, E_0$ such that
	$$\|v\|_{L^{2}(\Omega)}^{2}\leqslant  c(E_0,M)\big(\|\nabla_x v\|_{L^{2}(\Omega)}^{2}+\big(\int_{\Omega}\rho|v | dx\big)^{2}\big).$$
\end{lemma}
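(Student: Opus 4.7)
The plan is to argue by contradiction through a rescaling--compactness argument. Suppose the conclusion fails for every $c>0$; then for each $n$ there are $v_n \in W^{1,2}(\Omega)$ and $\rho_n \geq 0$ with $\int_\Omega \rho_n\,dx \geq M$ and $\int_\Omega \rho_n^\gamma\,dx \leq E_0$ such that
$$\|v_n\|_{L^2(\Omega)}^2 > n\Bigl(\|\nabla v_n\|_{L^2(\Omega)}^2 + \Bigl(\int_\Omega \rho_n |v_n|\,dx\Bigr)^2\Bigr).$$
Setting $w_n := v_n/\|v_n\|_{L^2(\Omega)}$ and dividing through yields $\|w_n\|_{L^2(\Omega)}=1$, $\|\nabla w_n\|_{L^2(\Omega)} \to 0$, and $\int_\Omega \rho_n |w_n|\,dx \to 0$.

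Next I would extract limits along subsequences. The uniform $W^{1,2}(\Omega)$ bound on $\{w_n\}$ together with Rellich--Kondrachov gives $w_n \to w$ strongly in $L^2(\Omega)$ and weakly in $W^{1,2}(\Omega)$; weak lower semicontinuity forces $\nabla w = 0$, so $w$ is a constant on $\Omega$, and $\|w\|_{L^2}=1$ yields $|w| = |\Omega|^{-1/2} \neq 0$. Simultaneously, $\|\rho_n\|_{L^\gamma(\Omega)} \leq E_0^{1/\gamma}$ with $\gamma>1$ produces a further subsequence with $\rho_n \rightharpoonup \rho$ weakly in $L^\gamma(\Omega)$, $\rho \geq 0$, and testing the weak convergence against the constant function $1$ gives $\int_\Omega \rho\,dx \geq M$.

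Finally I would pass to the limit in the cross term via the decomposition
$$\int_\Omega \rho_n |w_n|\,dx = |w|\int_\Omega \rho_n\,dx + \int_\Omega \rho_n\bigl(|w_n|-|w|\bigr)\,dx.$$
The first summand is bounded below by $|w|M>0$. The second is controlled by H\"older's inequality,
$$\Bigl|\int_\Omega \rho_n\bigl(|w_n|-|w|\bigr)\,dx\Bigr| \leq E_0^{1/\gamma}\,\bigl\||w_n|-|w|\bigr\|_{L^{\gamma'}(\Omega)},$$
and the right-hand side tends to $0$ by interpolating the strong $L^2$ convergence $w_n \to w$ against the three-dimensional Sobolev embedding $W^{1,2}(\Omega) \hookrightarrow L^6(\Omega)$ (which supplies a uniform $L^6$ bound on $w_n$). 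Consequently $\liminf_n \int_\Omega \rho_n |w_n|\,dx \geq |w|M > 0$, contradicting $\int_\Omega \rho_n |w_n|\,dx \to 0$, and the lemma follows.

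The main obstacle is closing the H\"older pairing in the final step: one has only weak $L^\gamma$ convergence of $\rho_n$, so the convergence of $w_n$ must be upgraded from $L^2$ to the full dual exponent $L^{\gamma'}$. Three dimensions enter precisely through the embedding $W^{1,2}\hookrightarrow L^6$, which after interpolation delivers strong $L^{\gamma'}$ convergence. Equally crucial is that the limit $w$ turns out to be constant: this lets $|w|$ be pulled outside the integral in the dominant term, so that the surviving mass bound $\int_\Omega \rho\,dx \geq M$ requires only strong convergence of $w_n$ (not of $\rho_n$) to be propagated to the limit.
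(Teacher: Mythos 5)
Your compactness strategy is the standard one (the paper does not prove this lemma itself but cites Feireisl's book, where essentially this contradiction argument appears), and most of your steps are sound: the normalization, Rellich--Kondrachov, the identification of the limit $w$ as a nonzero constant, and the lower bound $|w|\int_\Omega\rho_n\,dx\geqslant |w|M$. The genuine gap is in the final H\"older pairing. You bound $\bigl|\int_\Omega\rho_n(|w_n|-|w|)\,dx\bigr|\leqslant E_0^{1/\gamma}\,\bigl\||w_n|-|w|\bigr\|_{L^{\gamma'}(\Omega)}$ with $\gamma'=\gamma/(\gamma-1)$ and claim the right-hand side vanishes by interpolating the strong $L^2$ convergence against the uniform $L^6$ bound. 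Interpolation yields strong convergence of $w_n$ in $L^q(\Omega)$ only for $q<6$, and $\gamma'<6$ is equivalent to $\gamma>6/5$. The lemma is asserted for every $\gamma>1$; for $1<\gamma\leqslant 6/5$ your step breaks down: $w_n$ is merely bounded in $L^6$, there is no strong $L^{\gamma'}$ convergence (for $\gamma'>6$ the norm $\||w_n|-|w|\|_{L^{\gamma'}}$ need not even be finite), and since boundedness of $\Omega$ only lowers, never raises, the integrability exponent available for $\rho_n$, you cannot reduce the small-$\gamma$ case to the large-$\gamma$ one. So as written the proof establishes the inequality only for $\gamma>6/5$.

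The standard way to close this (and the route taken in the cited source) is to avoid pairing $\rho_n$ against the difference altogether. From $M\leqslant\int_\Omega\rho_n\,dx\leqslant \delta|\Omega|+E_0^{1/\gamma}\,|\{\rho_n\geqslant\delta\}|^{(\gamma-1)/\gamma}$, the choice $\delta=M/(2|\Omega|)$ gives $|A_n|\geqslant\alpha>0$ for $A_n=\{\rho_n\geqslant\delta\}$, with $\alpha$ depending only on $M,E_0,|\Omega|$. Then $\int_\Omega\rho_n|w_n|\,dx\geqslant\delta\int_{A_n}|w_n|\,dx\geqslant\delta\bigl(|w|\,|A_n|-\|w_n-w\|_{L^1(\Omega)}\bigr)\geqslant\delta\bigl(|w|\alpha-o(1)\bigr)$, which stays bounded away from zero and contradicts $\int_\Omega\rho_n|w_n|\,dx\to0$; only strong $L^1$ convergence of $w_n$ is needed, so this works for every $\gamma>1$. (Incidentally, in this paper the lemma is applied with $\rho\in L^\infty$, so your restricted range $\gamma>6/5$ would suffice for the applications, but it does not prove the statement as stated.)
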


For convenience's sake, we state a standard elliptic theory here, which can be found in \cite{Abels-2009}.

\begin{lemma}\label{lem2-3}
	For $1\leqslant  q\leqslant  \infty,$ we set
	$$L^{q}_{(m)}(\Omega)=\left\{f\in L^{q}(\Omega):m(f)=m\right\}\mbox{ with } m(f)=\frac{1}{|\Omega|}\int_{\Omega}f(x) dx. $$
	Let $u\in W^{2,q}(\Omega), \ \Delta u=f$ almost everywhere in a suitable bounded
	domain $\Omega\subset {\mathbb R}^3$, and $\partial_n u|_{\partial\Omega}=0$ in the sense of trace. If $f\in W^{1,q}(\Omega)\cap L^{q}_{(0)}(\Omega)$ and $\partial\Omega\in C^3,$ then $u\in W^{3,q}(\Omega).$ Moreover,
	$$\|u\|_{W^{k+2,q}(\Omega)}\leqslant  C_q\|f\|_{W^{k,q}(\Omega)}$$
	for all any $f\in W^{k,q}(\Omega)\cap L^{q}_{(0)}(\Omega) \ ( k=0,1),$ where the positive constant $C_q$ depends only on $1<q<\infty$ and $\Omega.$
\end{lemma}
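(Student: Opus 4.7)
The plan is to treat this as the classical $W^{k+2,q}$ regularity theory for the Neumann Laplacian, following the Agmon--Douglis--Nirenberg (ADN) scheme with a bootstrap step from $k=0$ to $k=1$.

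For the base case $k=0$: the Neumann problem $\Delta u = f$ with $\partial_n u|_{\partial\Omega}=0$ enjoys the solvability/compatibility condition $\int_\Omega f\,dx = 0$, which is precisely encoded in $f\in L^q_{(0)}(\Omega)$. I would first derive the a priori bound $\|u\|_{W^{2,q}}\leqslant C(\|f\|_{L^q}+\|u\|_{L^q})$ by the standard route: interior Calder\'on--Zygmund estimates together with boundary estimates obtained by locally flattening $\partial\Omega$ via a $C^2$-diffeomorphism and freezing coefficients against the constant-coefficient Laplacian on a half-space (with Neumann condition on the flat piece). Since solutions are only determined modulo constants, the clean estimate $\|u\|_{W^{2,q}}\leqslant C_q\|f\|_{L^q}$ requires normalizing $u$ (e.g.\ by $m(u)=0$) and then absorbing the $\|u\|_{L^q}$ term on the right via the Poincar\'e--Wirtinger inequality, or equivalently via a compactness/contradiction argument ruling out nontrivial harmonic functions with vanishing normal trace and zero mean.

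For the bootstrap step $k=1$ (assuming $f\in W^{1,q}$): the goal is to show $\nabla u\in W^{2,q}$. After straightening the boundary locally by a $C^3$-diffeomorphism, I would apply tangential difference quotients in the tangential coordinate directions. For such a direction $\tau$, the function $v=\partial_\tau u$ formally solves $\Delta v = \partial_\tau f + [\Delta,\partial_\tau]u$ with a Neumann-type boundary condition inherited from the original one (after absorbing commutators involving the flattening metric). Applying the $k=0$ estimate to $v$, with the commutator $[\Delta,\partial_\tau]u$ already controlled in $L^q$ by the $k=0$ step, yields $W^{2,q}$-bounds on all tangential-tangential third derivatives of $u$. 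The third derivatives that involve the normal direction are then extracted algebraically from the equation: in straightened coordinates, $\partial_n^2 u = f - \sum_{(i,j)\neq(n,n)} a_{ij}(x)\partial_i\partial_j u + (\text{l.o.t.})$, so differentiating tangentially once more produces the remaining mixed third derivatives in $L^q$. A partition-of-unity patching then gives $\|u\|_{W^{3,q}}\leqslant C_q\|f\|_{W^{1,q}}$.

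The main obstacle is the asymmetry between tangential and normal differentiation under the Neumann condition: tangential derivatives inherit a usable conormal-type boundary condition, whereas $\partial_n u$ does not, so the purely normal third derivative must be recovered from the PDE rather than from elliptic boundary regularity. This is precisely where $\partial\Omega\in C^3$ is needed: the flattening diffeomorphism must be $C^3$ so that, after change of variables, the Laplacian's coefficients are $C^2$, which is enough to keep the commutators $[\Delta,\partial_\tau]$ acting on $W^{2,q}$ functions in $L^q$ and to preserve the $W^{1,q}$ class of $\partial_\tau f$ under the chain rule. Once the a priori estimate is in place, the existence/regularity conclusion follows by density of smooth data and a standard continuity argument, with the constant $C_q$ depending only on $q$ and the local flattening charts of $\Omega$.
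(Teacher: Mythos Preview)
Your sketch is correct and follows the standard Agmon--Douglis--Nirenberg route to $L^q$ regularity for the Neumann Laplacian. The paper, however, does not prove this lemma at all: it is listed among the preliminaries with a citation to Abels (2009), and is ultimately classical elliptic theory. So there is no in-paper argument to compare against; your proposal simply supplies the details the paper omits by reference. Your observation that the clean estimate $\|u\|_{W^{k+2,q}}\leqslant C_q\|f\|_{W^{k,q}}$ requires a normalization such as $m(u)=0$ is well taken, since constants lie in the kernel of the Neumann Laplacian; in the paper's applications to $\phi$ the zero mode is in effect controlled separately via the basic energy estimate.
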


Now, we introduce a higher regularity for the stationary solution to the following Stokes system
\begin{equation*}
	\begin{cases}
		\partial_t u-\operatorname{div}(v(\phi)\mathbb{D}u)+\nabla p=f\ \ \ \ \mbox{in}\ \Omega\times(0,T),\\
		\operatorname{div}u=0\ \ \ \ \mbox{in}\ \Omega\times(0,T),\\
		u|_{\partial\Omega}=0\ \ \ \ \mbox{on}\ \partial\Omega\times(0,T),\\
		u|_{t=0}=u_0\ \ \ \ \mbox{in}\ \Omega.
	\end{cases}
\end{equation*}
This higher regularity for the stationary solution can be found in \cite{Abels-2009}.

\begin{lemma}\label{lem2-4}
	Let $\Omega$ be a suitable bounded domain in ${\mathbb R}^3$ with $\partial\Omega\in C^3,$  $v\in \mathnormal{C}^{2}(\mathbb{R}), \phi\in W^{1+j,r}(\Omega)\ (j=0,1)$, $r>3$ and $\|\phi\|_{W^{1+j,r}(\Omega)}\leqslant  R.$ Assume that $u\in V_2^{1+j}(\Omega)$ is the solution to the following equation
	$$(v(\phi)Du,D\psi)_{L^{2}(\Omega)}=(f,\psi)_{L^{2}(\Omega)} \ \ \ (\forall \psi\in \mathnormal{C}_{0,\sigma}^{\infty}(\Omega)),$$
	where $f\in H^{s}(\Omega)$ and $s\in[0,j].$ Then $u\in H^{2+s}(\Omega)$ with the property
	$$\|u\|_{H^{2+s}(\Omega)}\leqslant  C(R)\|f\|_{H^{s}(\Omega)},$$
	where $C(R)$ only depends on $\Omega,v,r$, $R>0$, and the space $V_2^{1+j}(\Omega)$ is defined as
	$$V_2^{1+j}(\Omega)=H^{1+j}(\Omega)\cap H^{1}_{0}(\Omega)\cap H^{1}(\Omega)\cap L^{2}_{\sigma}(\Omega) \ \ \ (j=0,1).$$
\end{lemma}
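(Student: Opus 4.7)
The plan is to treat the equation as a perturbation of the classical Stokes system with constant viscosity, for which the higher regularity theory is standard, and then to bootstrap using the structure of the equation.

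First, I would recover a pressure and turn the weak identity into a strong one. By the standard de Rham / inf-sup argument in a smooth bounded domain, there exists $p\in L^{2}(\Omega)$ with $\int_{\Omega}p\,dx=0$ such that
$$-\operatorname{div}\bigl(v(\phi)\mathbb{D}u\bigr)+\nabla p=f\text{ a.e. in }\Omega,\qquad \operatorname{div}u=0,\qquad u|_{\partial\Omega}=0.$$
Since $\operatorname{div}u=0$ we have $2\operatorname{div}(\mathbb{D}u)=\Delta u$, so the equation can be rewritten as
$$-v(\phi)\,\tfrac{1}{2}\Delta u+\nabla p=f+\nabla v(\phi)\cdot\mathbb{D}u,$$
which is the classical Stokes system with variable viscosity plus a lower-order source involving $\nabla v(\phi)$. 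The bound $\|u\|_{V_{\sigma}(\Omega)}\le C\|f\|_{H^{-1}(\Omega)}$ follows immediately by testing with $u$, using Korn's inequality and the lower bound $v(\phi)\ge v_{*}$.

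For the base case $s=j=0$ (so $\phi\in W^{1,r}(\Omega)$ with $r>3$) the key observation is that Sobolev embedding yields $v(\phi)\in C^{\alpha}(\overline{\Omega})$ for some $\alpha>0$ with a bound controlled by $R$, while $\nabla v(\phi)\cdot\mathbb{D}u\in L^{2}$. I would fix a finite partition of unity $\{\chi_{i}\}$ subordinate to balls small enough that the oscillation of $v(\phi)$ on each ball is as small as we like, flatten the boundary in each ball touching $\partial\Omega$ (permitted since $\partial\Omega\in C^{3}$), and in each chart write the equation for $\chi_{i}u$ as a constant-coefficient Stokes problem with viscosity $v(\phi(x_{i}))$ whose right-hand side contains $f$, commutators with $\chi_{i}$ (lower order), and the small perturbation $\bigl(v(\phi(\cdot))-v(\phi(x_{i}))\bigr)\Delta(\chi_{i}u)$. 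Applying Cattabriga's $W^{2,2}$ estimate for the constant-coefficient Stokes system in the half-space and absorbing the small perturbation (possible because the $C^{0}$-oscillation is chosen small), then summing over $i$, produces
$$\|u\|_{H^{2}(\Omega)}+\|\nabla p\|_{L^{2}(\Omega)}\le C(R)\bigl(\|f\|_{L^{2}(\Omega)}+\|u\|_{H^{1}(\Omega)}\bigr),$$
and the lower-order term on the right is absorbed by the basic energy estimate.

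For the higher case $s=j=1$ I would differentiate tangentially. Using tangential difference quotients $\partial_{k}^{h}$ in each boundary chart and testing the weak identity, one obtains in the limit an equation of the same variable-viscosity Stokes type for $\partial_{k}u$, with right-hand side $\partial_{k}f$ plus commutator terms of the form $\partial_{k}\bigl(v(\phi)\bigr)\mathbb{D}u$ and $v'(\phi)\nabla\phi\cdot\mathbb{D}(\partial_{k}u)$; these land in $L^{2}$ thanks to $v\in C^{2}$, $\phi\in W^{2,r}$ with $r>3$ (so $\nabla v(\phi)\in L^{\infty}$ and $\nabla^{2}v(\phi)\in L^{r}$) and the case $s=0$ bound on $u\in H^{2}$. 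Applying the base case to this auxiliary system controls all tangential third derivatives of $u$. The pure normal third derivative is then recovered algebraically from the flattened equation by solving for $\partial_{n}^{2}(\partial_{n}u)$ and using $\operatorname{div}u=0$ to express the remaining normal derivative of the divergence-free part of $u$ in terms of tangential ones. Summing over the partition of unity and absorbing lower-order contributions yields the desired
$$\|u\|_{H^{2+s}(\Omega)}\le C(R)\|f\|_{H^{s}(\Omega)}.$$

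The main obstacle I expect is the careful book-keeping at the boundary in the bootstrap step: specifically, controlling the commutators $[\partial_{k},\,v(\phi)]\mathbb{D}u$ in $L^{2}$ while simultaneously tracking the pressure in a space compatible with the Stokes regularity theorem, and then recovering normal derivatives from tangential ones using only the divergence constraint and the equation. The freedom to choose the balls in the partition of unity as small as needed (so that the $C^{0}$-oscillation of $v(\phi)$ is below the threshold imposed by Cattabriga's constant) is essential; this step is where the Hölder continuity of $v(\phi)$, hence the hypothesis $r>3$, is genuinely used.
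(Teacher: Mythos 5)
You should first note that the paper does not prove this lemma at all: it is imported verbatim as a known regularity result for the variable--viscosity Stokes operator, with a citation to Abels (2009), so there is no in-paper argument to compare against. Your sketch is the standard proof of exactly this kind of statement --- recover the pressure by de Rham, rewrite $-\operatorname{div}(v(\phi)\mathbb{D}u)$ as $-\tfrac12 v(\phi)\Delta u-\nabla v(\phi)\cdot\mathbb{D}u$ using $\operatorname{div}u=0$, localize and freeze the coefficient $v(\phi)$ (its modulus of continuity is controlled by $R$ through $W^{1,r}\hookrightarrow C^{0,1-3/r}$, $r>3$), apply Cattabriga/ADN estimates for the constant-coefficient Stokes system and absorb the small oscillation, then pass to $s=1$ by tangential difference quotients and recover normal derivatives from the equation and the divergence constraint. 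This is essentially the argument in the cited literature, and the overall structure is sound.

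One claim in your base case is imprecise and worth fixing: with only $u\in H^{1}(\Omega)$ and $\nabla v(\phi)=v'(\phi)\nabla\phi\in L^{r}(\Omega)$, the term $\nabla v(\phi)\cdot\mathbb{D}u$ lies in $L^{2r/(r+2)}(\Omega)$, not in $L^{2}(\Omega)$, so you cannot simply put it on the right-hand side as an $L^{2}$ source. The correct handling is to estimate
\begin{equation*}
\|\nabla v(\phi)\cdot\mathbb{D}u\|_{L^{2}(\Omega)}\leqslant \|v'(\phi)\nabla\phi\|_{L^{r}(\Omega)}\|\mathbb{D}u\|_{L^{2r/(r-2)}(\Omega)}\leqslant C(R)\,\|u\|_{H^{2}(\Omega)}^{\theta}\|u\|_{H^{1}(\Omega)}^{1-\theta},
\end{equation*}
with $\theta<1$ because $2r/(r-2)<6$ for $r>3$, and then absorb the $H^{2}$ contribution by Young's inequality inside the a priori estimate (made rigorous by difference quotients or by an approximation of $u$). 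This is exactly where the hypothesis $r>3$ enters a second time, beyond the smallness-of-oscillation point you already identified; with that correction, and the analogous bookkeeping for the pressure in each chart, your outline goes through and reproduces the stated bound $\|u\|_{H^{2+s}(\Omega)}\leqslant C(R)\|f\|_{H^{s}(\Omega)}$.
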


The following well-known Gronwall's Lemma will be frequently used later.

\begin{lemma}(Grownwall inequality \cite{2010PDE})\label{lem2-5}
	
	(i) \ Let $\eta(\cdot)$ be a nonnegative, absolutely continuous function on $[0,T]$ satisfying
	$$\eta^{\prime}(t)\leqslant\phi(t)\eta(t)+\psi(t),$$ where $\phi(t)$ and $\psi(t)$ are nonnegative, summable functions on $[0,T]$. Then
	$$\eta(t)\leqslant exp\big({\int_{0}^{t}\phi(s)ds}\big)\big(\eta(0)+\int_{0}^{t}\psi(s)ds\big),$$
	for all $0\leqslant t \leqslant T$.
	
	(ii) \ In particular, if
	$$\eta^{\prime}\leqslant \phi\eta \ \ \text{on} \ \ [0,T] \ \ \text{and} \ \ \eta(0)=0,$$
	then
	$$\eta \equiv 0 \ \ \text{on} \ \ [0,T].$$
\end{lemma}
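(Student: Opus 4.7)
The plan is to use the classical integrating‑factor trick. Define
\[
\Phi(t) := \exp\!\left(-\int_0^t \phi(s)\,ds\right),
\]
which is strictly positive and absolutely continuous on $[0,T]$ because $\phi$ is summable; moreover $\Phi'(t) = -\phi(t)\Phi(t)$ for almost every $t \in [0,T]$. Since both $\eta$ and $\Phi$ are absolutely continuous, their product $\Phi\eta$ is absolutely continuous, and the product rule holds almost everywhere.

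I would rewrite the hypothesis as $\eta'(t) - \phi(t)\eta(t) \leqslant \psi(t)$ a.e.\ on $[0,T]$, multiply by $\Phi(t) > 0$, and observe that the left‑hand side becomes an exact derivative:
\[
\frac{d}{dt}\bigl[\Phi(t)\eta(t)\bigr] \;=\; \Phi(t)\eta'(t) - \phi(t)\Phi(t)\eta(t) \;\leqslant\; \Phi(t)\psi(t)\quad\text{a.e.\ on }[0,T].
\]
Integrating this inequality over $[0,t]$ (which is legitimate by absolute continuity — the fundamental theorem of calculus applies) yields
\[
\Phi(t)\eta(t) - \eta(0) \;\leqslant\; \int_0^t \Phi(s)\psi(s)\,ds.
\]
Because $\phi \geqslant 0$, we have $\int_0^s \phi \geqslant 0$ and hence $0 < \Phi(s) \leqslant 1$ for $s \in [0,t]$; bounding $\Phi(s)\leqslant 1$ on the right and dividing through by $\Phi(t)$ gives
\[
\eta(t) \;\leqslant\; \Phi(t)^{-1}\!\left(\eta(0) + \int_0^t \psi(s)\,ds\right) \;=\; \exp\!\left(\int_0^t \phi(s)\,ds\right)\!\left(\eta(0) + \int_0^t \psi(s)\,ds\right),
\]
which is exactly the claim of (i).

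For part (ii), I would simply apply (i) with $\psi \equiv 0$ and $\eta(0) = 0$, obtaining $\eta(t) \leqslant 0$ on $[0,T]$. Combined with the standing assumption $\eta \geqslant 0$ (inherited from (i), since (ii) is stated as a special case), this forces $\eta \equiv 0$. There is no real obstacle in this proof; the only point requiring a moment of care is the use of the product rule and the fundamental theorem of calculus for absolutely continuous functions, so that the a.e.\ pointwise differential inequality can be integrated to a genuine inequality of values — but this is standard and available directly from the absolute continuity hypothesis on $\eta$.
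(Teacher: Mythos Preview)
Your proof is correct and is the standard integrating-factor argument for Gronwall's inequality. The paper itself does not supply a proof of this lemma at all; it merely cites Evans \cite{2010PDE}, where precisely this argument appears, so there is nothing further to compare.
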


In the end, we state the existence of local strong solution to the initial-boundary problem \eqref{1.1}-\eqref{1.4}, which is obtained by  Giorgini and Temam in \cite{{Giorgini-2020}}.

\begin{lemma}\label{Existence}
	Let $\Omega$ be a bounded domain of class $\mathnormal{C}^3$ in $\mathbb{R}^{3}$. Assume that $\rho_0\in L^{\infty}(\Omega)$, $u_0 \in V_{\sigma}(\Omega)$ and $\phi_0 \in H^{2}(\Omega)$ are given such that $0 <\rho_*\leqslant \rho_0 \leqslant\rho^{*}$, $\partial_n\phi_0 = 0$ on $\partial\Omega$, and $\mu_0 = -\frac{\Delta\phi_0}{\rho_0}+\Psi^{\prime}(\phi_0)\in H^{1}(\Omega)$. Then, there exist $T^{*} > 0$, depending on the norms of the initial data, and a strong solution $(\rho,u,\phi,\mu)$ to the initial-boundary problem \eqref{1.1}-\eqref{1.4} on $[0,T^*]$ satisfying
	\begin{equation}\label{existence}
		\begin{cases}
			\rho\in \mathnormal{C}([0,T^{*}];L^{r}(\Omega))\cap L^{\infty}(\Omega\times (0,T^{*}))\cap L^{\infty}(0,T^{*};H^{-1}(\Omega)),\\
			u\in \mathnormal{C}([0,T^{*}];V_\sigma(\Omega))\cap L^{2}(0,T^{*};H^{2}(\Omega))\cap H^{1}(0,T^{*};H_\sigma(\Omega)),\\
			\phi\in \mathnormal{C}([0,T^{*}];(W^{2,6}(\Omega))_{w})\cap H^{1}(0,T^{*};H^{1}(\Omega)),\\
			\mu\in L^{\infty}(0,T^{*};H^{1}(\Omega))\cap L^{2}(0,T^{*};W^{2,6}(\Omega))
		\end{cases}
	\end{equation}
	for any $r\in[2,\infty]$.
\end{lemma}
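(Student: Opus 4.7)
The plan is to construct the local strong solution by a semi-Galerkin / linearization scheme followed by a fixed-point contraction, since the system \eqref{1.1} decouples naturally into a transport equation for $\rho$, a non-homogeneous Stokes problem for $u$, and a Cahn-Hilliard-type subsystem for $(\phi,\mu)$ once the other unknowns are frozen. Concretely, I would introduce an iteration $(\rho^n,u^n,\phi^n,\mu^n)\mapsto(\rho^{n+1},u^{n+1},\phi^{n+1},\mu^{n+1})$ defined as follows: first solve the transport equation $\rho^{n+1}_t+u^n\cdot\nabla\rho^{n+1}=0$ with the given initial datum by characteristics, noting that $\divv u^n=0$ preserves the bounds $\rho_*\leqslant\rho^{n+1}\leqslant\rho^*$; then, given $\rho^{n+1}$ and $u^n$, determine $(\phi^{n+1},\mu^{n+1})$ from the coupled system $\rho^{n+1}\phi^{n+1}_t+\rho^{n+1}u^n\cdot\nabla\phi^{n+1}=\Delta\mu^{n+1}$ and $\rho^{n+1}\mu^{n+1}=-\Delta\phi^{n+1}+\rho^{n+1}\Psi'(\phi^{n+1})$ with Neumann conditions; finally, solve the non-homogeneous Stokes problem for $u^{n+1}$ with source $-\divv(\nabla\phi^{n+1}\otimes\nabla\phi^{n+1})-\rho^{n+1}(u^n\cdot\nabla)u^n$, invoking Lemma \ref{lem2-4} for its $H^2$-regularity via the $\phi^{n+1}$-dependent viscosity.

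The a priori estimates would then be derived on a time interval $[0,T^*]$ chosen small enough to absorb the nonlinear terms. I would test the momentum equation with $u^{n+1}_t$ to control $\|\nabla u^{n+1}\|_{L^2}^2+\int\rho^{n+1}|u^{n+1}_t|^2$, then apply Lemma \ref{lem2-4} to obtain $\|u^{n+1}\|_{H^2}$ in terms of the data, the inertial term, and the Korteweg stress. For the Cahn-Hilliard block, I would test $\phi^{n+1}_t$ and $\Delta\mu^{n+1}$ to obtain $H^1$-in-time and $W^{2,6}$-in-space control on $\mu^{n+1}$ (using Lemma \ref{lem2-3} for the Neumann Poisson inversion), while the pointwise relation $\rho^{n+1}\mu^{n+1}+\Delta\phi^{n+1}=\rho^{n+1}\Psi'(\phi^{n+1})$ together with elliptic regularity yields $\phi^{n+1}\in (W^{2,6})_w$. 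The Landau form \eqref{1.2} gives the polynomial bound $|\Psi'(s)|\leqslant C(1+|s|^3)$, and combined with the Gagliardo-Nirenberg inequality (Lemma \ref{lem2-1}) this keeps the cubic nonlinearity under control.

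Having a uniform bound for the iterates in the class \eqref{existence} on a common interval $[0,T^*]$, I would next prove that the map is a contraction in a weaker norm, say
\[
\sup_{[0,T^*]}\bigl(\|\rho^{n+1}-\rho^n\|_{L^2}+\|u^{n+1}-u^n\|_{L^2}+\|\phi^{n+1}-\phi^n\|_{H^1}\bigr)+\int_0^{T^*}\|\nabla(u^{n+1}-u^n)\|_{L^2}^2\,dt,
\]
by writing out the equations for the differences, using the transport identity $\partial_t(\rho^{n+1}-\rho^n)+u^n\cdot\nabla(\rho^{n+1}-\rho^n)=-(u^n-u^{n-1})\cdot\nabla\rho^n$, and absorbing the cubic Landau term through the uniform higher-order bounds already established. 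Passing to the limit in the linear terms is standard; the critical point is that weak-$*$ convergence in $L^\infty(0,T^*;(W^{2,6})_w)$ for $\phi^n$ suffices to pass to the limit in the quadratic Korteweg stress $\nabla\phi\otimes\nabla\phi$ thanks to the compact embedding $W^{2,6}\hookrightarrow\hookrightarrow W^{1,\infty}$ on the $3$-D domain.

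The main obstacle I expect is treating the coupled pair $(\phi^{n+1},\mu^{n+1})$ when the chemical-potential relation carries $\rho$ as a coefficient: this forbids a direct bi-Laplacian formulation and forces one to work with the system form, so the natural energy estimate requires choosing test functions that simultaneously use the transport identity for $\rho^{n+1}$ and the cancellation between $\rho^{n+1}u^n\cdot\nabla\phi^{n+1}$ and the Korteweg stress (a mass-type integration by parts yielding the standard energy dissipation $\int\rho|\phi_t|^2+\int|\nabla\mu|^2$). Getting the $W^{2,6}$-regularity of $\mu$ then rests on Lemma \ref{lem2-3}, and closing the iteration requires that $T^*$ be chosen depending only on $\rho_*,\rho^*,\|u_0\|_{V_\sigma}, \|\phi_0\|_{H^2}, \|\mu_0\|_{H^1}$, as asserted in the statement. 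Uniqueness follows from essentially the same contraction estimate applied to two strong solutions sharing the same data, combined with Gronwall (Lemma \ref{lem2-5}).
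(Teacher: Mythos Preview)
The paper does not prove this lemma at all: it is stated in Section~2 as a preliminary result and attributed directly to Giorgini and Temam \cite{Giorgini-2020}. So there is no ``paper's own proof'' to compare against; your proposal is an independent attempt to supply one.

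That said, your outline contains a genuine gap. In the contraction step you write the difference equation for the density as
\[
\partial_t(\rho^{n+1}-\rho^n)+u^n\cdot\nabla(\rho^{n+1}-\rho^n)=-(u^n-u^{n-1})\cdot\nabla\rho^n,
\]
and propose to estimate $\|\rho^{n+1}-\rho^n\|_{L^2}$. But the hypotheses give only $\rho_0\in L^\infty(\Omega)$, and the regularity class \eqref{existence} for $\rho$ contains no spatial derivatives whatsoever; the term $\nabla\rho^n$ simply does not exist as an $L^p$ function, and integrating by parts merely transfers the problem to $\nabla(\rho^{n+1}-\rho^n)$. A contraction in $L^2$ for the density is therefore not available in this regularity setting. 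One would have to either regularize $\rho_0$ (and then remove the regularization, which is nontrivial because the contraction constant would blow up), or run the density stability in a negative Sobolev norm such as $H^{-1}$, or abandon the fixed-point route entirely.

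In fact the original Giorgini--Temam argument does not proceed by contraction: it uses a semi-Galerkin scheme (finite-dimensional approximation only in the velocity variable, keeping $\rho,\phi,\mu$ continuous) together with uniform a priori estimates and compactness to pass to the limit; uniqueness is handled separately via a stability estimate in weaker norms that avoids differentiating $\rho$. Your a priori estimates for the iterates are broadly in the right spirit, but the convergence mechanism you propose would need to be replaced by compactness, and the low regularity of $\rho$ has to be respected throughout.
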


\section{Blow-up criterion}\label{S3}

Throughout the rest of this subsection, we denote by $C$ a generic constant depending only on depending only on $\Omega, \ v_*, \ v^*,$ $\|\sqrt{\rho_0}u_0\|_{L^2(\Omega)},$  $\|\nabla u_0\|_{L^2(\Omega)},$  $\|\nabla \mu_0\|_{L^2(\Omega)}, \ \|\nabla \phi_0\|_{L^2(\Omega)}.$  The positive constant $C(\rho^*)$ depending on $\Omega$,  $v_*$, $v^*$, $\rho^*$ and $ \|\nabla u_0\|_{L^2(\Omega)},$ $\|\nabla \mu_0\|_{L^2(\Omega)},$ $\|\nabla \phi_0\|_{L^2(\Omega)}.$ Here, we make the positive constant $C(\rho^*)$ special in order to light on our next section.

Let $T^*\in (0,\infty)$ be the maximum time of existence of strong solution $(\rho,u,\phi,\mu)$ to the initial-boundary problem \eqref{1.1}-\eqref{1.4}. Namely, $(\rho,u,\phi,\mu)$ is a strong solution to \eqref{1.1}-\eqref{1.4} in $\Omega\times[0,T]$ for any $T\in (0,T^*),$ but not a strong solution in $\Omega\times[0,T^*].$ Suppose that \eqref{1.11} were false, i.e.
\begin{equation}\label{3.1}
	M:=\|u\|_{L^{\frac{4r}{r-6}}(0,T^*;L^{r}(\Omega))}<\infty.
\end{equation}
The goal is to show that under the assumption \eqref{3.1}, there is a bound $C>0$ such that
\begin{equation}\label{3.2}
	0<\frac{\rho_*}{C}\leqslant \varrho \leqslant C\rho^*,
\end{equation}
and there is a bound $C(\rho^*)>0$ such that
\begin{align}\label{3.3}
		\sup_{t \in [0,T]}\big(\|\nabla u\|_{L^{2}(\Omega)}^{2}+\|\nabla \mu\|_{L^{2}(\Omega)}^{2}+\|\phi\|_{W^{2,6}(\Omega)}^{2} \big)+ \int_{0}^{T}(\| \phi_t\|_{H^{1}(\Omega)}^{2}+\| u_t \|_{L^{2}(\Omega)}^{2}+\|u\|_{H^{2}(\Omega)}^{2}) dt\leqslant  C(\rho^*).
\end{align}

Obviously, the lower bound and the upper bound of the density is obtained easily.

\begin{lemma}\label{lem3-1}
	Under the conditions of Theorem \ref{thm 1.1}, there exists a positive constant $C>0$ such that
	$$0<\frac{\rho_*}{C}\leqslant\rho(x,t)\leqslant C\rho^*$$
	holds for any $(x,t)\in \Omega \times [0,T]$ with $T\in (0,T^{*}).$
\end{lemma}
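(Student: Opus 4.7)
The density satisfies the pure transport equation $\rho_t+u\cdot\nabla\rho=0$ with a divergence-free velocity that vanishes on $\partial\Omega$, so the lemma should really follow from the classical maximum principle for transport with incompressible drift. In fact, I expect to obtain the sharper bound $\rho_*\leqslant\rho(x,t)\leqslant\rho^*$ (which is of course stronger than what is claimed, so the constant $C$ in the statement is only cosmetic).

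The plan is to use a renormalization argument. Given the regularity of $u$ in \eqref{1.10} --- in particular $u\in L^2(0,T;H^2(\Omega))\hookrightarrow L^2(0,T;W^{1,6}(\Omega))$ and $u|_{\partial\Omega}=0$ --- the DiPerna--Lions theory applies, and for every Lipschitz $\beta:\mathbb{R}\to\mathbb{R}$ the composition satisfies
\[
	\partial_t\beta(\rho)+u\cdot\nabla\beta(\rho)=0\qquad\text{in }\mathcal{D}'\bigl(\Omega\times(0,T)\bigr).
\]
Integrating over $\Omega$, using $\operatorname{div}u=0$ and the no-slip boundary condition, the flux term drops out and
\[
	\frac{d}{dt}\int_\Omega\beta(\rho)\,dx=-\int_\Omega u\cdot\nabla\beta(\rho)\,dx=\int_\Omega(\operatorname{div}u)\,\beta(\rho)\,dx=0,
\]
so $\int_\Omega\beta(\rho(t))\,dx=\int_\Omega\beta(\rho_0)\,dx$ for every $t\in[0,T]$.

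I would then pick the nonnegative truncations $\beta_1(s)=(s-\rho^*)_+^2$ and $\beta_2(s)=(\rho_*-s)_+^2$. The hypothesis $\rho_*\leqslant\rho_0\leqslant\rho^*$ in \eqref{1.9} forces $\int_\Omega\beta_i(\rho_0)\,dx=0$, hence the conservation above gives $\int_\Omega\beta_i(\rho(t))\,dx=0$ for all $t\in[0,T]$. This means $(\rho-\rho^*)_+\equiv 0$ and $(\rho_*-\rho)_+\equiv 0$ almost everywhere in $\Omega\times[0,T]$, which is exactly $\rho_*\leqslant\rho(x,t)\leqslant\rho^*$.

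The only point that requires some care is the justification of the renormalization step in the present regularity class; this, however, is routine because $\nabla u\in L^2(0,T;L^6(\Omega))$ is comfortably above the threshold needed by the DiPerna--Lions commutator lemma, and an alternative avenue (at the same cost) is to mollify the transport equation, pass to the limit in the commutator, and then test with a $C^1$ approximation of the truncations $\beta_i$. I therefore do not expect any real obstacle here; the main content of the lemma is just the maximum principle for the transport equation with incompressible drift.
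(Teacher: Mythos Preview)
Your argument is correct and yields the sharper bound $\rho_*\leqslant\rho\leqslant\rho^*$; the paper does not supply a proof at all, simply remarking that ``the lower bound and the upper bound of the density is obtained easily,'' so your renormalization/maximum-principle argument is exactly the standard justification the authors are tacitly invoking.
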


The basic energy inequality of the strong solution to the problem \eqref{1.1}-\eqref{1.4} is stated in the following lemma, whose proof can be found in \cite{Giorgini-2020}. The basic energy inequality plays a crucial role in our analysis, and its proof is given here for the convenience of readers.

\begin{lemma}\label{lem3-2}
	Under the conditions of Theorem \ref{thm 1.1}, the energy inequality
	\begin{align}\label{3.4}
			\sup_{t\in [0,T]}\int_{\Omega}(\frac{\rho u^{2}}{2}+\frac{\rho(\phi^{2}-1)^{2}}{4}+\frac{|\nabla \phi|^{2}}{2}) dx +\int_{0}^{T}\int_{\Omega}(v(\phi)|{\mathbb D} u|^{2}+|\nabla \mu |^{2}) dx dt\leqslant  C
	\end{align}
	holds for any $T\in (0,T^{*}).$
\end{lemma}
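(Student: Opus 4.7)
The plan is to derive an exact energy identity by testing the momentum equation against $u$ and the Cahn--Hilliard equation against $\mu$, then exploit the chemical potential relation $\rho\mu=-\Delta\phi+\rho\Psi'(\phi)$ and the continuity equation $\rho_t+u\cdot\nabla\rho=0$ so that the Korteweg cross terms cancel exactly. All manipulations are justified in the strong-solution class \eqref{1.10}, where every integration by parts uses either $u|_{\partial\Omega}=0$ or $\partial_n\phi=\partial_n\mu=0$.

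First I would test the momentum equation with $u$. Using $\operatorname{div}u=0$, the continuity equation to rewrite $\int_\Omega(\rho u_t+\rho u\cdot\nabla u)\cdot u\,dx$ as $\frac{d}{dt}\int_\Omega\frac{\rho|u|^2}{2}dx$, the symmetry of $\mathbb{D}u$, and the identity
$$\operatorname{div}(\nabla\phi\otimes\nabla\phi)=(\Delta\phi)\nabla\phi+\tfrac12\nabla(|\nabla\phi|^2),$$
which lets the gradient part be absorbed into the pressure, I arrive at
$$\frac{d}{dt}\int_\Omega\frac{\rho|u|^2}{2}dx+\int_\Omega v(\phi)|\mathbb{D}u|^2dx=-\int_\Omega (\Delta\phi)(u\cdot\nabla\phi)\,dx.$$

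Next I would test the Cahn--Hilliard equation $\rho\phi_t+\rho u\cdot\nabla\phi=\Delta\mu$ against $\mu$, use $\partial_n\mu=0$, and then substitute $\rho\mu=-\Delta\phi+\rho\Psi'(\phi)$ in the resulting $\int_\Omega\rho\mu\phi_t\,dx$ and $\int_\Omega\rho\mu(u\cdot\nabla\phi)\,dx$ terms. The piece with $-\Delta\phi$ combined with $\partial_n\phi=0$ produces $\frac{d}{dt}\int_\Omega\frac{|\nabla\phi|^2}{2}dx$, while the piece with $\rho\Psi'(\phi)$ combined with $\rho_t=-\operatorname{div}(\rho u)$ produces $\frac{d}{dt}\int_\Omega\rho\Psi(\phi)\,dx$ (because the material-derivative structure $\int_\Omega(\rho\Psi'(\phi)\phi_t+\rho u\cdot\Psi'(\phi)\nabla\phi)\,dx$ equals $\frac{d}{dt}\int_\Omega\rho\Psi(\phi)\,dx$). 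This yields
$$\frac{d}{dt}\int_\Omega\Bigl[\frac{|\nabla\phi|^2}{2}+\rho\Psi(\phi)\Bigr]dx+\int_\Omega|\nabla\mu|^2dx=\int_\Omega(\Delta\phi)(u\cdot\nabla\phi)\,dx.$$

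Adding the two identities, the two $\int_\Omega(\Delta\phi)(u\cdot\nabla\phi)\,dx$ terms cancel exactly, and recalling $\Psi(\phi)=\frac14(\phi^2-1)^2$ I obtain
$$\frac{d}{dt}\int_\Omega\Bigl[\frac{\rho u^2}{2}+\frac{\rho(\phi^2-1)^2}{4}+\frac{|\nabla\phi|^2}{2}\Bigr]dx+\int_\Omega v(\phi)|\mathbb{D}u|^2dx+\int_\Omega|\nabla\mu|^2dx=0.$$
Integrating in time from $0$ to $T$ and bounding the initial energy using $\rho_0\leqslant\rho^*$, $u_0\in V_\sigma(\Omega)$, and $\phi_0\in H^2(\Omega)\hookrightarrow L^\infty(\Omega)$ gives the claimed inequality. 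The only subtle point, rather than a genuine obstacle, is verifying that the regularity stated in \eqref{1.10} is sufficient to make every integration by parts and every use of the continuity equation rigorous; the key structural observation is the exact cancellation of the Korteweg contribution against the convective transport of $\Psi(\phi)$ allowed by the nonhomogeneous Cahn--Hilliard equation together with the particular definition $\rho\mu=-\Delta\phi+\rho\Psi'(\phi)$.
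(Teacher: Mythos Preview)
Your proof is correct and follows essentially the same route as the paper: test the momentum equation with $u$, the Cahn--Hilliard equation with $\mu$, use the relation $\rho\mu=-\Delta\phi+\rho\Psi'(\phi)$ together with the continuity equation, and observe that the coupling terms cancel exactly to yield the energy identity. The only cosmetic difference is that the paper organizes the cancellation around the pair $\int_\Omega\rho\mu\,u\cdot\nabla\phi\,dx$ and $\int_\Omega\rho_t\Psi(\phi)\,dx$, whereas you organize it around the single term $\int_\Omega(\Delta\phi)(u\cdot\nabla\phi)\,dx$; these are algebraically equivalent.
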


\begin{proof}
	Multiplying $\eqref{1.1}_2$ by $u$ and integrating over $\Omega$, one obtains that
	\begin{equation}\label{3.5}
		\frac{d}{dt}\int_{\Omega}\frac{1}{2}\rho|u |^{2} dx
		+\int_{\Omega}v(\phi)|\mathbb{D}u|^{2} dx=-\int_{\Omega}\operatorname{div}(\nabla\phi\otimes\nabla\phi)\cdot u dx.
	\end{equation}
	Next, one multiplies $\eqref{1.1}_4$ by $\mu$ and integrates the resultant over $\Omega$ to obtain that
	\begin{align}\label{3.6}
			\frac{d}{dt}\int_{\Omega}\big(\frac{1}{2}|\nabla\phi|^{2}+\rho\Psi(\phi)\big) dx+\int_{\Omega}\rho u \cdot\nabla\phi\mu dx+\int_{\Omega}|\nabla\mu|^{2} dx=\int_{\Omega}\rho_t\Psi(\phi)dx
	\end{align}
	by using $\eqref{1.1}_5$ and  $\eqref{1.2}.$ Recalling that
	\begin{align*}
		-\operatorname{div}(\nabla\phi\otimes\nabla\phi)&=-\Delta \phi\nabla\phi-\nabla(\frac{1}{2}|\nabla\phi|^{2})\\
		&=\rho\mu\nabla\phi-\rho\Psi^{\prime}(\phi)\nabla\phi-\nabla(\frac{1}{2}|\nabla\phi|^{2})\\
		&=\rho\mu\nabla\phi-\rho\nabla\Psi(\phi)-\nabla(\frac{1}{2}|\nabla\phi|^{2}),
	\end{align*}
	one deduces from \eqref{3.6} that
	\begin{equation}\label{3.7}
		\frac{d}{dt}\int_{\Omega}\frac{1}{2}\rho|u |^{2} dx+\int_{\Omega}v(\phi)|\mathbb{D}u|^{2} dx=\int_{\Omega}\rho\mu\nabla\phi\cdot u dx-\int_{\Omega}\rho u \cdot \nabla\Psi(\phi) dx.
	\end{equation}
	Summing $\eqref{3.5}$ and $\eqref{3.7}$, one arrives at
	$$\frac{d}{dt}\int_{\Omega}\big(\frac{1}{2}\rho|u |^{2}+\frac{1}{2}|\nabla\phi|^{2}+\rho\Psi(\phi)\big) dx
	+\int_{\Omega}v(\phi)|\mathbb{D}u|^{2} dx+\int_{\Omega}|\nabla\mu|^{2} dx=0.$$
	Integrating the above equation in time, one gets that
	\begin{align}\label{3.8}
		&\int_{\Omega}\big(\frac{1}{2}\rho|u|^{2}+\frac{1}{2}|\nabla\phi|^{2}+\rho\Psi(\phi)\big) dx
		+\int_{0}^{t}\int_{\Omega}\big(v(\phi)| \mathbb{D}u|^{2}+|\nabla\mu|^{2}\big) dxd\tau\nonumber \\
		&=\int_{\Omega}\frac{1}{2}\rho_0 |u_0 |^{2} + \frac{1}{2}|\nabla\phi_0 |^{2}+\rho_0 \Psi(\phi_0) dx
	\end{align}
	for all $t\in [0,T]$. The desired inequality \eqref{3.4} is obtained.
\end{proof}

The key estimates on $\|\phi\|_{H^{1}(\Omega)},  \|\mu\|_{H^{1}(\Omega)}$ and $\|u\|_{H^{2}(\Omega)}$ will be given in the following lemma.

\begin{lemma}\label{lem3-3}
	Under the conditions of Theorem \ref{1.1} and \eqref{3.1}, we can obtain
	\begin{eqnarray}
		&&\sup_{t\in [0,T]}\|\phi\|_{H^{1}(\Omega)}\leqslant  C,\label{3.9}\\
		&&\|\mu\|_{H^{1}(\Omega)}\leqslant  C (1+\|\nabla \mu\|_{L^{2}(\Omega)}),\label{3.10}\\
		&&\|\phi\|_{W^{2,6}(\Omega)}\leqslant  C (1+\|\nabla\mu\|_{L^{2}(\Omega)}),\label{3.11} \\
		&&\|u\|_{H^{2}(\Omega)}\leqslant  C (1+\|\nabla\mu\|_{L^{2}(\Omega)}^{\frac{5}{4}}+\| \sqrt{\rho}u_t \|_{L^{2}(\Omega)}+\|u\|_{L^{6}(\Omega)}^{3}),\label{3.12}
	\end{eqnarray}
	for any $T\in (0,T^{*}).$
\end{lemma}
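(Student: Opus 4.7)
The plan is to establish the four bounds in sequence, drawing on the energy inequality (Lemma \ref{lem3-2}), the density bounds (Lemma \ref{lem3-1}), the elliptic regularity of Lemmas \ref{lem2-3} and \ref{lem2-4}, and the Gagliardo--Nirenberg inequality. Bound \eqref{3.9} is immediate: from $\|\nabla\phi\|_{L^2(\Omega)}\leq C$ and $\int_\Omega\rho(\phi^2-1)^2\,dx\leq C$ (Lemma \ref{lem3-2}) together with the lower bound $\rho\geq \rho_*/C$ (Lemma \ref{lem3-1}), I get $\|\phi^2-1\|_{L^2(\Omega)}\leq C$, hence $\|\phi\|_{L^4(\Omega)}\leq C$, and thus $\|\phi\|_{H^1(\Omega)}\leq C$. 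For \eqref{3.10}, split $\mu=(\mu-\bar\mu)+\bar\mu$ with $\bar\mu=|\Omega|^{-1}\int_\Omega\mu\,dx$: Poincar\'e--Wirtinger bounds $\|\mu-\bar\mu\|_{L^2(\Omega)}\leq C\|\nabla\mu\|_{L^2(\Omega)}$; integrating the constitutive relation $\rho\mu=-\Delta\phi+\rho\Psi'(\phi)$ over $\Omega$ and using $\partial_n\phi=0$ yields $\int_\Omega\rho\mu\,dx=\int_\Omega\rho(\phi^3-\phi)\,dx$, which is bounded by a constant thanks to \eqref{3.9}; finally, since the mass $\int_\Omega\rho\,dx$ is preserved under transport by a divergence-free field and is strictly positive, the identity $\bar\mu\int_\Omega\rho\,dx=\int_\Omega\rho\mu\,dx-\int_\Omega\rho(\mu-\bar\mu)\,dx$ delivers $|\bar\mu|\leq C(1+\|\nabla\mu\|_{L^2(\Omega)})$ and hence \eqref{3.10}.

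For \eqref{3.11}, regard $\phi$ as a solution to the Neumann problem $-\Delta\phi=\rho\mu-\rho\Psi'(\phi)$, whose right-hand side has zero mean (by the identity just used). Apply Lemma \ref{lem2-3} first with $q=2$: using $\|\phi^3\|_{L^2(\Omega)}=\|\phi\|_{L^6(\Omega)}^3\leq C\|\phi\|_{H^1(\Omega)}^3\leq C$ together with \eqref{3.10} I obtain the intermediate bound $\|\phi\|_{H^2(\Omega)}\leq C(1+\|\nabla\mu\|_{L^2(\Omega)})$. Then apply the lemma with $q=6$ and estimate $\|\Delta\phi\|_{L^6(\Omega)}\leq C\|\mu\|_{L^6(\Omega)}+C\|\phi^3\|_{L^6(\Omega)}+C\|\phi\|_{L^6(\Omega)}$: Sobolev gives $\|\mu\|_{L^6(\Omega)}\leq C\|\mu\|_{H^1(\Omega)}\leq C(1+\|\nabla\mu\|_{L^2(\Omega)})$, while the Gagliardo--Nirenberg inequality (Lemma \ref{lem2-1}) furnishes $\|\phi\|_{L^{18}(\Omega)}\leq C\|\phi\|_{W^{2,6}(\Omega)}^{1/6}\|\phi\|_{L^6(\Omega)}^{5/6}$, so that $\|\phi^3\|_{L^6(\Omega)}\leq C\|\phi\|_{W^{2,6}(\Omega)}^{1/2}$. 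Young's inequality then absorbs this half-power into the left-hand side and closes \eqref{3.11}.

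For \eqref{3.12}, first rewrite $-\operatorname{div}(\nabla\phi\otimes\nabla\phi)=-\Delta\phi\nabla\phi-\tfrac12\nabla|\nabla\phi|^2$ and fold the pure gradient into a modified pressure $\tilde p$, turning the momentum equation into the stationary Stokes problem
\[
-\operatorname{div}(v(\phi)\mathbb D u)+\nabla\tilde p=-\rho u_t-\rho(u\cdot\nabla)u-\Delta\phi\nabla\phi.
\]
Since \eqref{3.11} controls $\|\phi\|_{W^{1,6}(\Omega)}$, Lemma \ref{lem2-4} with $s=j=0$ yields $\|u\|_{H^2(\Omega)}\leq C\|f\|_{L^2(\Omega)}$, where $f$ denotes the right-hand side above. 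The inertial term satisfies $\|\rho u_t\|_{L^2(\Omega)}\leq\sqrt{\rho^*}\|\sqrt{\rho}\, u_t\|_{L^2(\Omega)}$. For the convective term, the Gagliardo--Nirenberg interpolation $\|\nabla u\|_{L^3(\Omega)}\leq C\|u\|_{L^6(\Omega)}^{1/2}\|u\|_{H^2(\Omega)}^{1/2}$ gives $\|\rho(u\cdot\nabla)u\|_{L^2(\Omega)}\leq C\|u\|_{L^6(\Omega)}^{3/2}\|u\|_{H^2(\Omega)}^{1/2}$, and Young's inequality extracts $\epsilon\|u\|_{H^2(\Omega)}+C_\epsilon\|u\|_{L^6(\Omega)}^3$. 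The Korteweg contribution $\|\Delta\phi\nabla\phi\|_{L^2(\Omega)}$ is then estimated by a delicate Gagliardo--Nirenberg interpolation involving both the $H^2$- and $W^{2,6}$-bounds of \eqref{3.11}. I expect this last step to be the main obstacle: a direct split such as $\|\Delta\phi\|_{L^6(\Omega)}\|\nabla\phi\|_{L^3(\Omega)}$ only yields an exponent $3/2$ on $1+\|\nabla\mu\|_{L^2(\Omega)}$, so extracting the sharper $5/4$ claimed in \eqref{3.12} requires a finely tuned interpolation between intermediate $L^p$-norms of $\Delta\phi$ and $\nabla\phi$, complemented by a secondary Young-type absorption against the $\epsilon\|u\|_{H^2(\Omega)}$ available on the left-hand side.
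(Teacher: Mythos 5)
Your treatment of \eqref{3.9}--\eqref{3.11} is correct, and in places takes a mildly different but legitimate route from the paper: for \eqref{3.9} you use the pointwise lower bound on $\rho$ from Lemma \ref{lem3-1} to convert $\int_\Omega\rho(\phi^2-1)^2\,dx\leqslant C$ into $\|\phi\|_{L^4(\Omega)}\leqslant C$, whereas the paper bounds $\int_\Omega\rho|\phi|\,dx$ and invokes Lemma \ref{lem2-2}; for \eqref{3.10} you control the mean of $\mu$ via Poincar\'e--Wirtinger and the integrated constitutive relation, whereas the paper tests $\eqref{1.1}_5$ with $\mu$ to get $\int_\Omega\rho\mu^2\,dx\leqslant C(1+\|\nabla\mu\|_{L^2(\Omega)}^2)$ and again uses Lemma \ref{lem2-2}. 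Both variants are fine, and your remark that the right-hand side of the Neumann problem has zero mean (needed for Lemma \ref{lem2-3}) is a point the paper glosses over. Your \eqref{3.11} argument, with $\|\phi\|_{L^{18}(\Omega)}\leqslant C\|\phi\|_{W^{2,6}(\Omega)}^{1/6}\|\phi\|_{L^6(\Omega)}^{5/6}$ and absorption, matches the paper.

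The genuine gap is in \eqref{3.12}, precisely at the Korteweg term, which you explicitly leave open. The missing idea is not a ``finely tuned'' interpolation at all: one only has to estimate $\|\nabla\phi\|_{L^3(\Omega)}$ by Gagliardo--Nirenberg against the \emph{second}-order $L^6$ norm rather than by Lebesgue interpolation between $L^2$ and $L^6$. Indeed, Lemma \ref{lem2-1} applied to $\nabla\phi$ with $j=0$, $k=1$, $p=6$, $r=2$ gives $\theta=\tfrac14$, i.e.
\begin{equation*}
\|\nabla\phi\|_{L^{3}(\Omega)}\leqslant C\,\|\phi\|_{W^{2,6}(\Omega)}^{\frac14}\,\|\nabla\phi\|_{L^{2}(\Omega)}^{\frac34},
\end{equation*}
so that
\begin{equation*}
\|\Delta\phi\,\nabla\phi\|_{L^{2}(\Omega)}\leqslant \|\Delta\phi\|_{L^{6}(\Omega)}\|\nabla\phi\|_{L^{3}(\Omega)}
\leqslant C\,\|\phi\|_{W^{2,6}(\Omega)}^{\frac54}\|\nabla\phi\|_{L^{2}(\Omega)}^{\frac34}
\leqslant C\bigl(1+\|\nabla\mu\|_{L^{2}(\Omega)}^{\frac54}\bigr),
\end{equation*}
using \eqref{3.9} and \eqref{3.11}; this is exactly how the paper obtains the exponent $\tfrac54$, with the same bound for the companion term $\nabla^{2}\phi\cdot\nabla\phi$ (which you instead fold into the pressure, which is also fine for Lemma \ref{lem2-4}). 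Your splitting $\|\nabla\phi\|_{L^3(\Omega)}\leqslant \|\nabla\phi\|_{L^2(\Omega)}^{1/2}\|\nabla\phi\|_{L^6(\Omega)}^{1/2}$ is what produces the weaker exponent $\tfrac32$, and your proposed remedy --- a secondary Young absorption against $\epsilon\|u\|_{H^2(\Omega)}$ --- cannot work, because the Korteweg term contains no factor of $\|u\|_{H^2(\Omega)}$ to absorb; as written your argument only proves \eqref{3.12} with $\|\nabla\mu\|_{L^2(\Omega)}^{3/2}$ in place of $\|\nabla\mu\|_{L^2(\Omega)}^{5/4}$, which is not the stated estimate.
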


\begin{proof}
	
	Firstly, it is deduced from Lemma \ref{lem3-2} that
	$$\int_{\Omega}(\rho\phi^{4}+\rho) dx\leqslant C+2\int_{\Omega}\rho\phi^{2} dx\leqslant C+\int_{\Omega}\frac{1}{2}\rho\phi^{4} dx,$$
	and so
	$$\int_{\Omega}\rho\phi^{4} dx\leqslant C.$$
	Thus, one gets that
	\begin{equation}\label{3.13}\int_{\Omega}\rho|\phi \rvert  dx\leqslant \big(\int_{\Omega}\rho|\phi |^{4} dx\big)^{\frac{1}{4}}\big(\int_{\Omega}\rho dx\big)^{\frac{3}{4}}\leqslant  C.
	\end{equation}
	Combining \eqref{3.4} and \eqref{3.7}, one obtains from Lemma \ref{lem2-2} that
	$$\sup_{t\in [0,T]}\|\phi\|_{H^{1}(\Omega)}\leqslant  C$$
	for any $T\in (0,T^*).$	
	
	Secondly, multiplying $ \mu $ on both sides of equation $\eqref{1.1}_5$ and integrating the result equation over $\Omega$, one has that
	\begin{align}\label{3.14}
		\begin{split}
			\int_{\Omega}\rho\mu^{2}dx &=\int_{\Omega}-\Delta \phi\mu dx+\int_{\Omega}\rho\mu (\phi^{2}-1)\phi dx\\
			&=\int_{\Omega}\nabla\phi\cdot\nabla\mu dx + \int_{\Omega}\rho\mu(\phi^{2}-1)\phi dx\\
			&\leqslant\frac{1}{2}\|\nabla\phi\|^{2}_{L^{2}(\Omega)}+\frac{1}{2}\|\nabla\mu\|^{2}_{L^{2}(\Omega)}+\frac{1}{2}\int_{\Omega}\rho\mu^{2} dx+\frac{1}{2}\|\sqrt{\rho}\phi^{2}\|_{L^{2}(\Omega)}\|\sqrt{\rho}(\phi^{2}-1)\|_{L^{2}(\Omega)}\\
			&\leqslant  C(1+\|\nabla\mu\|_{L^{2}(\Omega)}^{2})+\frac{1}{2}\int_{\Omega}\rho\mu^{2} dx,
		\end{split}
	\end{align}
	which implies
	$$\int_{\Omega}\rho\mu^{2} dx\leqslant C(1+\|\nabla \mu\|_{L^{2}(\Omega)}^{2}).$$
	Moreover,
	$$\int_{\Omega}\rho|\mu | dx\leqslant	\big(\int_{\Omega}\rho dx\big)^{\frac{1}{2}}\big(\int_{\Omega}\rho\mu^{2} dx\big)^{\frac{1}{2}}\leqslant C(1+\|\nabla\mu\|_{L^{2}(\Omega)}).$$
	Using Lemma \ref{lem2-2} again, one gets that
	$$\|\mu\|_{H^{1}(\Omega)}\leqslant  C(1+\|\nabla \mu\|_{L^{2}(\Omega)}).$$
	
	Thirdly, we recall that $\Psi(\phi)=\frac{1}{4}(\phi^2-1)^2$ and then $\Psi^{\prime}(\phi)=(\phi^2-1)\phi.$ Now, one applies Lemma \ref{lem2-3} to $\eqref{1.1}_5$ and then obtains from \eqref{3.3} that
	\begin{align*}
		\|\phi\|_{W^{2,6}(\Omega)}&\leqslant  C(\Omega)\|\rho\mu+\rho\Psi^{\prime}(\phi)\|_{L^{6}(\Omega)}\\
		&\leqslant  C(\Omega)(\|\rho\mu\|_{L^{6}(\Omega)}+\|\rho\Psi^{\prime}(\phi)\|_{L^{6}(\Omega)})\\
		&\leqslant  C (1+\|\nabla \mu\|_{L^{2}(\Omega)}+\|\phi\|^{\frac{1}{3}}_{L^{18}(\Omega)}+\|\phi\|_{L^{6}(\Omega)})\\
		&\leqslant  C (1+\|\nabla \mu\|_{L^{2}(\Omega)})+\eta\|\phi\|_{W^{2,6}(\Omega)}
	\end{align*}
	holds for any fixed $\eta \in (0,1).$ Thus
	$$\|\phi\|_{W^{2,6}(\Omega)}\leqslant  C(1+\|\nabla \mu\|_{L^{2}(\Omega)}).$$

	Finally,  one can apply Lemma \ref{lem2-4} on $\eqref{1.1}_2$ to arrive that
	\begin{align*}
		\|u\|_{H^{2}(\Omega)}
		&\leqslant C(\Omega) \|-\Delta\phi\nabla\phi-\nabla^{2}\phi\cdot\nabla\phi-\rho u_t -\rho(u\cdot\nabla)u\|_{L^{2}(\Omega)}\\
		&\leqslant C(\Omega) (\|\Delta\phi\|_{L^{6}(\Omega)}\|\nabla\phi\|_{L^{3}(\Omega)}
		+\|\nabla^{2}\phi\|_{L^{6}(\Omega)}\|\nabla\phi\|_{L^{3}(\Omega)}+\|\sqrt{\rho} u_t \|_{L^{2}(\Omega)}\\
		&\quad+\|\nabla u\|_{L^{3}(\Omega)}\|u\|_{L^{6}(\Omega)})\\
		&\leqslant C(\Omega) (\|\phi\|_{W^{2,6}(\Omega)}^{\frac{5}{4}}\|\nabla\phi\|_{L^{2}(\Omega)}^{\frac{3}{4}}
		+\|\sqrt{\rho} u_t \|_{L^{2}(\Omega)}+\| u\|_{H^{2}(\Omega)}^{\frac{1}{2}}\|u\|_{L^{6}(\Omega)}^{\frac{3}{2}})\\
		&\leqslant  C(1+\|\nabla\mu\|_{L^{2}(\Omega)}^{\frac{5}{4}}+\|\sqrt{\rho} u_t \|_{L^{2}(\Omega)}
		+\|u\|_{H^{2}(\Omega)}^{\frac{1}{2}}\|u\|_{L^{6}(\Omega)}^{\frac{3}{2}})\\
		&\leqslant  C(1+\|\nabla\mu\|_{L^{2}(\Omega)}^{\frac{5}{4}}
		+\|\sqrt{\rho} u_t \|_{L^{2}(\Omega)}+\|u\|_{L^{6}(\Omega)}^{3})+\eta\|u\|_{H^{2}(\Omega)}
	\end{align*}
	holds for any fixed $\eta \in (0,1).$ Thus, one gets that
	$$\|u\|_{H^{2}(\Omega)}\leqslant  C(1+\|\nabla\mu\|_{L^{2}(\Omega)}^{\frac{5}{4}}+\| \sqrt{\rho}u_t \|_{L^{2}(\Omega)}+\|u\|_{L^{6}(\Omega)}^{3}).$$
	The proof of Lemma \ref{lem3-3} is completed.
\end{proof}

With the help of Lemma \ref{lem3-2} and Lemma \ref{lem3-3}, we are able to obtain the following lemma.

\begin{lemma}\label{lem3-4}
	Under the conditions of Theorem \ref{1.1} and \eqref{3.1}, it holds that
	\begin{align*}
\sup_{t\in [0,T]}\big(\|\nabla u\|^{2}_{L^{2}(\Omega)}+\|\nabla \mu\|^{2}_{L^{2}(\Omega)}+\|\phi\|^{2}_{W^{2,6}(\Omega)} \big)+\int_{0}^{T}\big(\| \phi_t\|_{H^{1}(\Omega)}^{2}+\|  u_t \|_{L^{2}(\Omega)}^{2}+\|u\|_{H^{2}(\Omega)}^{2}\big) dt\leqslant  C
	\end{align*}
	for any $T\in (0,T^{*}).$
\end{lemma}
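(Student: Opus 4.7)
The plan is to derive coupled differential inequalities for $\|\nabla u\|_{L^2}^2$ and $\|\nabla \mu\|_{L^2}^2$, with dissipation quantities $\|\sqrt{\rho}u_t\|_{L^2}^2$ and $\|\nabla\phi_t\|_{L^2}^2$ on the left, and then to close via Gronwall using the assumption \eqref{3.1}. All the other quantities appearing in the conclusion (namely $\|\phi\|_{W^{2,6}}$, $\|u\|_{H^{2}}$, $\|u_t\|_{L^2}$, $\|\phi_t\|_{H^1}$) will follow automatically from Lemma \ref{lem3-3} once these two master quantities are controlled.

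First I would test $\eqref{1.1}_2$ with $u_t$ and rewrite the Korteweg force as $-\operatorname{div}(\nabla\phi\otimes\nabla\phi)=\rho\mu\nabla\phi-\rho\nabla\Psi(\phi)-\nabla(|\nabla\phi|^2/2)$, dropping the gradient against the divergence-free $u_t$. This produces
\begin{equation*}
\int_\Omega \rho|u_t|^2\,dx+\tfrac{1}{2}\tfrac{d}{dt}\int_\Omega v(\phi)|\mathbb{D}u|^2\,dx=\tfrac{1}{2}\int v'(\phi)\phi_t|\mathbb{D}u|^2-\int \rho(u\cdot\nabla)u\cdot u_t+\int(\rho\mu\nabla\phi-\rho\nabla\Psi(\phi))\cdot u_t.
\end{equation*}
In parallel I would test $\eqref{1.1}_4$ with $\mu_t$, use the time derivative of $\eqref{1.1}_5$ (together with $\rho_t=-u\cdot\nabla\rho$ and $\partial_n\phi_t=0$) to replace $\rho\mu_t$ by $-\Delta\phi_t+\rho\Psi''(\phi)\phi_t+\tfrac{\rho_t}{\rho}\Delta\phi$, and integrate by parts. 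This yields a twin identity of the form
\begin{equation*}
\tfrac{1}{2}\tfrac{d}{dt}\|\nabla\mu\|_{L^2}^2+\|\nabla\phi_t\|_{L^2}^2+\int_\Omega\rho\Psi''(\phi)\phi_t^{2}\,dx=-\int_\Omega\rho u\cdot\nabla\phi\,\mu_t\,dx+\int_\Omega\phi_t\tfrac{u\cdot\nabla\rho}{\rho}\Delta\phi\,dx.
\end{equation*}

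Next, I would estimate every term on the right-hand sides using Hölder's inequality, Sobolev embeddings, the density bounds of Lemma \ref{lem3-1}, the basic energy inequality, and the conditional bounds of Lemma \ref{lem3-3}. The critical interpolation is $\|\nabla u\|_{L^{2r/(r-2)}}\leqslant C\|\nabla u\|_{L^2}^{1-3/r}\|u\|_{H^2}^{3/r}$ (whence $\theta=3/r$ in Gagliardo-Nirenberg), so that
\begin{equation*}
\Bigl|\int_\Omega\rho(u\cdot\nabla)u\cdot u_t\,dx\Bigr|\leqslant C\|u\|_{L^r}\|\nabla u\|_{L^2}^{1-3/r}\|u\|_{H^2}^{3/r}\|\sqrt{\rho}u_t\|_{L^2}.
\end{equation*}
Plugging in the estimate $\|u\|_{H^2}\leqslant C(1+\|\nabla\mu\|_{L^2}^{5/4}+\|\sqrt{\rho}u_t\|_{L^2}+\|u\|_{L^6}^3)$ from Lemma \ref{lem3-3} and using Young's inequality with a small parameter absorbs $\|\sqrt{\rho}u_t\|^2$ and $\|\nabla\mu\|^{5/2}$-type powers into the dissipation, at the cost of a coefficient proportional to $\|u\|_{L^r}^{4r/(r-6)}$ — exactly the Serrin-type exponent featured in \eqref{3.1}. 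The convective term $\int\rho u\cdot\nabla\phi\,\mu_t$ and the $\tfrac{1}{2}\int v'(\phi)\phi_t|\mathbb{D}u|^2$ term are treated similarly, using $\|\phi\|_{W^{2,6}}\leqslant C(1+\|\nabla\mu\|_{L^2})$, $\|\mu\|_{H^1}\leqslant C(1+\|\nabla\mu\|_{L^2})$, and reintegrating the $\mu_t$ via duality so the resulting factor lands on $\nabla\phi_t$ which is absorbed. The indefinite term $\int\rho\Psi''(\phi)\phi_t^{2}=\int\rho(3\phi^{2}-1)\phi_t^{2}$ is controlled by $(1+\|\phi\|_{L^{\infty}}^2)\|\sqrt{\rho}\phi_t\|_{L^2}^2$, and $\|\sqrt{\rho}\phi_t\|_{L^2}$ is further controlled by $\|\nabla\phi_t\|_{L^2}$ up to a low-order mean term using Lemma \ref{lem2-2}.

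Adding the two identities and absorbing, one reaches an inequality of the schematic form
\begin{equation*}
\tfrac{d}{dt}\bigl(\|\nabla u\|_{L^2}^2+\|\nabla\mu\|_{L^2}^2\bigr)+c\bigl(\|\sqrt{\rho}u_t\|_{L^2}^2+\|\nabla\phi_t\|_{L^2}^2\bigr)\leqslant g(t)\bigl(1+\|\nabla u\|_{L^2}^2+\|\nabla\mu\|_{L^2}^2\bigr),
\end{equation*}
where $g(t)\in L^1(0,T)$ precisely because $g$ is dominated by a constant multiple of $1+\|u\|_{L^r}^{4r/(r-6)}$, which is integrable by \eqref{3.1}. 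Applying Gronwall's lemma (Lemma \ref{lem2-5}) yields the uniform bound on $\|\nabla u\|_{L^2}^2+\|\nabla\mu\|_{L^2}^2$ and the $L^2_tL^2_x$ bound on $u_t,\nabla\phi_t$; the remaining bounds $\|\phi\|_{W^{2,6}}$, $\|u\|_{H^2}$, and $\|\phi_t\|_{H^1}$ follow directly by substitution into the estimates \eqref{3.11}--\eqref{3.12} of Lemma \ref{lem3-3} and standard elliptic estimates on $\Delta\mu=\rho\phi_t+\rho u\cdot\nabla\phi$.

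I expect the main obstacle to be bookkeeping the exponents so that every coefficient in the Gronwall right-hand side is subordinate to the assumed norm $\|u\|_{L^{4r/(r-6)}(0,T;L^r)}$ after Young's inequality: the interaction between the $\rho\mu\nabla\phi$ force in the momentum equation and the coupling $\rho u\cdot\nabla\phi\,\mu_t$ in the chemical potential identity forces us to simultaneously absorb $\|\sqrt{\rho}u_t\|_{L^2}^2$, $\|\nabla\phi_t\|_{L^2}^2$, and powers of $\|\nabla\mu\|_{L^2}$ arising from $\|u\|_{H^2}$ and $\|\phi\|_{W^{2,6}}$. A secondary difficulty is the non-sign-definite character of $\Psi''(\phi)=3\phi^{2}-1$ for the Landau potential \eqref{1.2}, which, unlike the logarithmic potential, does not supply a coercive contribution and must be dominated by the dissipation of $\phi_t$ through $W^{2,6}$ control of $\phi$.
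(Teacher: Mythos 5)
Your overall architecture (test the momentum equation with $u_t$, derive a companion identity giving $\tfrac{d}{dt}\|\nabla\mu\|_{L^2}^2$ with $\|\nabla\phi_t\|_{L^2}^2$ as dissipation, estimate everything through Lemma \ref{lem3-3}, and close by Gronwall with the Serrin-type norm \eqref{3.1}) is the same as the paper's. However, the way you produce the companion identity has a genuine gap. You replace $\rho\mu_t$ by differentiating \eqref{1.1}$_5$ in time, which leaves on the right-hand side the term $\int_\Omega\phi_t\,\frac{u\cdot\nabla\rho}{\rho}\,\Delta\phi\,dx$ (and, if you treat $\int\rho u\cdot\nabla\phi\,\mu_t$ by the same substitution, a second term of the same type). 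Under the solution class \eqref{1.10} the density is only bounded: there is no control whatsoever on $\nabla\rho$, and none can be generated, since $\rho$ is transported by a velocity that is merely $L^2(0,T;H^2)$. Nor can the term be repaired by writing $\frac{u\cdot\nabla\rho}{\rho}=u\cdot\nabla\ln\rho$ and integrating by parts, because the derivative then lands on $\Delta\phi$, and $\nabla\Delta\phi$ is again uncontrolled (via \eqref{1.1}$_5$ it reintroduces $\nabla\rho$). The paper avoids this trap by never differentiating \eqref{1.1}$_5$ in the form you use: it writes $\rho\phi_t\mu_t=\phi_t(\rho\mu)_t-\rho_t\phi_t\mu$, keeps the combination $\Psi'(\phi)-\mu$ intact (whose gradient involves only $\nabla\phi$ and $\nabla\mu$), uses $\rho_t=-\operatorname{div}(\rho u)$ and integrates by parts so derivatives never fall on $\rho$ or on $\Delta\phi$, and handles the remaining $\mu_t$-term by absorbing $\int_\Omega\rho\mu\,u\cdot\nabla\phi\,dx$ into the energy functional under the time derivative (see \eqref{3.17}--\eqref{3.19} and \eqref{3.22}). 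Your phrase ``reintegrating the $\mu_t$ via duality'' does not supply this mechanism: as written, your identity still contains $\mu_t$, for which no a priori bound is available.

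A secondary, also substantive, problem is your treatment of the indefinite part of $\int_\Omega\rho\Psi''(\phi)\phi_t^2\,dx$. You propose to dominate $\int_\Omega\rho\phi_t^2\,dx$ via Lemma \ref{lem2-2} by $\|\nabla\phi_t\|_{L^2}^2$ plus a mean term and absorb it into the dissipation; but the constant in Lemma \ref{lem2-2} (times $\rho^*$) is not small, so the absorption into the single $\|\nabla\phi_t\|_{L^2}^2$ on the left fails for general $\rho^*$. The paper instead tests \eqref{1.1}$_4$ with $\phi_t$, multiplies that identity by $2$, and adds it, which converts the troublesome $-\int_\Omega\rho\phi_t^2\,dx$ into a positive contribution at the cost of the harmless terms $I_{11}$, $I_{12}$ in \eqref{3.19}. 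With these two structural fixes your estimates of the remaining terms (the interpolation $\|\nabla u\|_{L^{2r/(r-2)}}\lesssim\|\nabla u\|_{L^2}^{1-3/r}\|u\|_{H^2}^{3/r}$, the use of \eqref{3.10}--\eqref{3.12}, and the Gronwall step with $g\lesssim 1+\|u\|_{L^r}^{4r/(r-6)}$) would indeed follow the paper's lines.
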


\begin{proof}
	Multiplying $ u_t $ on both sides of equation $\eqref{1.1}_2$ and integrating the result equation over $\Omega$, one has that
	\begin{align}\label{3.15}
		\begin{split}
			&\frac{1}{2}\frac{d}{dt}\int_{\Omega}v(\phi)|\mathbb{D}u |^{2} dx+\int_{\Omega}\rho|u_t  |^{2} dx+\int_{\Omega}\rho(u\cdot \nabla)u\cdot u_t dx\\
			&=\int_{\Omega}\rho\mu\nabla\phi\cdot u_t  dx-\int_{\Omega}\rho\nabla\Psi(\phi)\cdot u_t  dx+\int_{\Omega}v^{\prime}(\phi) \phi_t |\mathbb{D}u |^{2} dx.
		\end{split}
	\end{align}
	Next, multiplying the equation $\eqref{1.1}_4$ by $ \mu_t $ and integrating over $\Omega$, one finds that
	\begin{equation}\label{3.16}
		\frac{1}{2}\frac{d}{dt}\|\nabla\mu\|_{L^{2}(\Omega)}^{2}+\int_\Omega(\rho \phi_t\mu_t+\rho u\cdot\nabla\phi\mu_t) dx=0.
	\end{equation}
	Using $\eqref{1.1}_1$ and $\eqref{1.1}_5$, one obtains from \eqref{3.16} that
	\begin{align}\label{3.17}
		\begin{split}
			&\frac{d}{dt}\left\{\frac{1}{2}\|\nabla\mu\|_{L^{2}(\Omega)}^{2}+\int_{\Omega}\rho\mu u\cdot\nabla\phi  dx\right\}+\int_{\Omega}|\nabla\phi_t  |^{2} dx+3\int_{\Omega}\rho\phi^{2}\phi^{2}_{t}dx-\int_{\Omega}\rho\phi^{2}_{t} dx\\
			&=-\int_{\Omega}\rho\Psi^{\prime\prime} (\phi) \phi_t  u\cdot\nabla\phi  dx-\int_{\Omega}\rho\Psi^{\prime}(\phi)u\cdot\nabla\phi_t   dx+\int_{\Omega}\rho u_t \cdot\nabla\mu dx\\
			&\quad+\int_{\Omega}\rho(u\cdot\nabla\mu)(u\cdot\nabla\phi) dx+\int_{\Omega}\rho\mu u\cdot\nabla(u\cdot\nabla\phi) dx+\int_{\Omega}\rho\mu \phi_t dx+2\int_{\Omega}\rho\mu u\cdot\nabla\phi_t dx.
		\end{split}
	\end{align}
	Now, one multiplies the equation $\eqref{1.1}_4$ by $ \phi_t $ and integrates the resultant over $\Omega$ to get that
	$$\int_{\Omega}\rho| \phi_t  |^{2} dx=-\int_{\Omega}\rho u\cdot\nabla\phi \phi_t   dx-\int_{\Omega}\nabla\mu\cdot\nabla\phi_t dx.$$
	Since $\Psi^{\prime\prime} (s)=3s^{2}-1\geqslant -1$, one can multiply the above equation by a factor 2 and add the resultant with \eqref{3.17} to arrive at
	\begin{align}\label{3.18}
		\begin{split}
			&\frac{d}{dt}\left\{\frac{1}{2}\|\nabla\mu\|_{L^{2}(\Omega)}^{2}+\int_{\Omega}\rho\mu u\cdot\nabla\phi  dx\right\}+\int_{\Omega}|\nabla\phi_t  |^{2} dx+\int_{\Omega}\rho| \phi_t  |^{2} dx+3\int_{\Omega}\rho\phi^{2}|\phi_t \rvert^{2} dx\\
			&=-\int_{\Omega}\rho\Psi^{\prime\prime} (\phi) \phi_t  u\cdot\nabla\phi  dx-\int_{\Omega}\rho\Psi^{\prime}(\phi)u\cdot\nabla\phi_t dx+\int_{\Omega}\rho u_t \cdot\nabla\mu dx\\
			&\quad+\int_{\Omega}\rho(u\cdot\nabla\mu)(u\cdot\nabla\phi) dx+\int_{\Omega}\rho\mu u\cdot\nabla(u\cdot\nabla\phi) dx+\int_{\Omega}\rho\mu \phi_t dx\\
			&\quad+2\int_{\Omega}\rho\mu u\cdot\nabla\phi_t   dx-2\int_{\rho}u\cdot\nabla\phi \phi_t   dx-2\int_{\Omega}\nabla\mu\cdot\nabla\phi_t dx.
		\end{split}
	\end{align}
	Summing \eqref{3.15} and \eqref{3.18}, one arrives at
	\begin{align}\label{3.19}
		&\frac{d}{dt}\left\{\int_{\Omega}\frac{v(\phi)}{2}|\mathbb{D}u |^{2} dx+\frac{1}{2}\|\nabla\mu\|_{L^{2}(\Omega)}^{2}
		+\int_{\Omega}\rho\mu u\cdot\nabla\phi  dx\right\}\nonumber\\
		&\quad+\int_{\Omega}|\nabla\phi_t  |^{2} dx+\int_{\Omega}\rho| \phi_t  |^{2} dx+\int_{\Omega}\rho| u_t  |^{2} dx+3\int_{\Omega}\rho\phi^{2}| \phi_t \rvert^{2} dx\nonumber\\
		&=\underbrace{-\int_{\Omega}\rho(u\cdot\nabla)u \cdot u_t   dx}_{I_1}+\underbrace{2\int_{\Omega}\rho\mu\nabla\phi\cdot u_t dx}_{I_2}+\underbrace{\int_{\Omega}-\rho\nabla\Psi(\phi)\cdot u_t dx}_{I_3}\nonumber\\
		&\quad+\underbrace{\int_{\Omega}v^{\prime}(\phi) \phi_t |\mathbb{D}u |^{2} dx}_{I_4}+\underbrace{\int_{\Omega}-\rho\Psi^{\prime\prime} (\phi) \phi_t  u\cdot\nabla\phi dx}_{I_5}+\underbrace{\int_{\Omega}-\rho\Psi^{\prime}(\phi)u\cdot\nabla\phi_t dx}_{I_6}\nonumber\\
		&\quad+\underbrace{\int_{\Omega}\rho \phi_t  u\cdot\nabla\mu dx}_{I_7}+\underbrace{\int_{\Omega}\rho(u\cdot\nabla\mu)(u\cdot\nabla\phi) dx}_{I_8}+\underbrace{\int_{\Omega}\rho\mu u\cdot\nabla(u\cdot\nabla\phi) dx}_{I_9}\nonumber\\
		&\quad+\underbrace{2\int_{\Omega}\rho\mu u\cdot\nabla\phi_t   dx}_{I_{10}}+\underbrace{2\int_{\Omega}-\rho u\cdot\nabla\phi \phi_t  dx}_{I_{11}}+\underbrace{2\int_{\Omega}-\nabla\mu\cdot\nabla\phi_t dx}_{I_{12}}.
	\end{align}
	With the help of Lemma \ref{lem3-3}, Sobolev inequality, H$\ddot{\text{o}}$lder inequality and Young inequality, each term on the right hand of \eqref{3.19} is estimated as follows. That is,
	\begin{align*}
		|I_1|
		&=|\int_{\Omega}\rho (u\cdot \nabla)u\cdot u_t  dx |\\
		&\leqslant  C(\rho^{*})\|\sqrt{\rho} u_t \|_{L^{2}(\Omega)}\|u\|_{L^{6}(\Omega)}\|\nabla u\|_{L^{3}(\Omega)}\\
		&\leqslant C(\rho^{*})\|\sqrt{\rho}u_t\|_{L^{2}(\Omega)}\|u\|_{L^{6}(\Omega)}^{\frac{3}{2}}\|u\|_{H^{2}}^{\frac{1}{2}} \\
		&\leqslant\eta\|\sqrt{\rho} u_t \|_{L^{2}(\Omega)}^{2}+C(\rho^{*},\eta)\|u\|_{L^{6}(\Omega)}^{3}\| u\|_{H^{2}(\Omega)}\\
		&\leqslant\eta\|\sqrt{\rho} u_t \|_{L^{2}(\Omega)}^{2}+C(\rho^{*},\eta)(1+\|u\|_{L^{6}(\Omega)}^{3}+\|\nabla \mu\|_{L^{2}(\Omega)}^{\frac{5}{4}}+\|\sqrt{\rho} u_t \|_{L^{2}(\Omega)})\|u\|_{L^{6}(\Omega)}^{3}\\
		&\leqslant\eta\|\sqrt{\rho} u_t \|_{L^{2}(\Omega)}^{2}+C(\rho^{*},\eta)\|\nabla u\|_{L^{2}(\Omega)}^{2}(1+\|u\|_{L^{6}(\Omega)}^{4}+\|\nabla\mu\|_{L^{2}(\Omega)}^{2}),\\
		|I_2|&=2|\int_{\Omega}\rho\mu\nabla\phi\cdot u_t dx| \\
		&\leqslant  C(\rho^{*})\|\sqrt{\rho} u_t \|_{L^{2}(\Omega)}\|\mu\|_{L^{6}(\Omega)}\|\nabla\phi\|_{L^{3}(\Omega)}\\
		&\leqslant  C(\rho^{*})\|\sqrt{\rho} u_t \|_{L^{2}(\Omega)}\|\mu\|_{H^{1}(\Omega)}\|\nabla\phi\|_{L^{2}(\Omega)}^{\frac{3}{4}}\|\phi\|_{W^{2,6}(\Omega)}^{\frac{1}{4}}\\
		&\leqslant\eta\|\sqrt{\rho} u_t \|_{L^{2}(\Omega)}^{2}+C(\rho^{*},\eta)(1+\|\nabla\mu\|_{L^{2}(\Omega)}^{\frac{5}{2}}),\\
		|I_3|&=|\int_{\Omega}\rho\nabla\Psi(\phi)\cdot u_t dx | \\
		&\leqslant  C(\rho^{*})\|\Psi^{\prime}(\phi)\|_{L^{\infty}(\Omega)}\|\nabla\phi\|_{L^{2}(\Omega)}\|\sqrt{\rho} u_t \|_{L^{2}(\Omega)}^{2}\\
		&\leqslant  C(\rho^*)\|\phi^{3}-\phi\|_{L^{\infty}(\Omega)}\|\sqrt{\rho} u_t \|_{L^{2}(\Omega)}^{2}\\
		&\leqslant\eta\|\sqrt{\rho} u_t \|_{L^{2}(\Omega)}^{2}+C(\rho^{*},\eta)(\|\phi\|_{L^{\infty}(\Omega)}^{6}+\|\phi\|_{L^{\infty}(\Omega)}^{2})\\
		&\leqslant\eta\|\sqrt{\rho} u_t \|_{L^{2}(\Omega)}^{2}+C(\rho^{*},\eta)(1+\|\phi\|_{L^{2}(\Omega)}^{3}\|\phi\|_{W^{2,6}(\Omega)}^{3})\\
		&\leqslant\eta\|\sqrt{\rho} u_t \|_{L^{2}(\Omega)}^{2}+C(\rho^{*},\eta)(1+\|\nabla\mu\|_{L^{2}(\Omega)}^{3}),\\
		|I_4 |&=|\int_{\Omega}v^{\prime}(\phi)\phi_t|\mathbb{D}u |^{2} dx |   \\
		&\leqslant  C\| \phi_t \|_{L^{6}(\Omega)}\|\nabla u\|_{L^{3}(\Omega)}\|\nabla u\|_{L^{2}(\Omega)}\\
		&\leqslant C\|\phi_t\|_{L^{6}(\Omega)}\|\nabla u\|_{L^{2}(\Omega)}\|u\|_{L^{r}(\Omega)}^{\frac{r}{6+r}}\|u\|_{H^{2}(\Omega)}^{\frac{6}{6+r}} \\
		&\leqslant\eta\| \phi_t\|_{L^{6}(\Omega)}^{2}+C(\rho^{*},\eta)\|\nabla u\|_{L^{2}(\Omega)}^{2}\|u\|_{L^{r}(\Omega)}^{\frac{2r}{6+r}}\|u\|_{H^{2}(\Omega)}^{\frac{12}{6+r}}\\
		&\leqslant  \eta\| \phi_t\|_{L^{6}(\Omega)}^{2}+C(\rho^{*},\eta)\|\nabla u\|_{L^{2}(\Omega)}^{2}\|u\|_{L^{r}(\Omega)}^{\frac{2r}{6+r}}(1+\|\nabla \mu\|_{L^{2}(\Omega)}^{\frac{15}{6+r}}\|\sqrt{\rho}u_t\|_{L^{2}(\Omega)}^{\frac{12}{6+r}}+\|u\|_{L^{6}(\Omega)}^{\frac{36}{6+r}})\\
		&\leqslant \eta(\|\phi_t\|_{L^{6}(\Omega)}^{2}+\|\sqrt{\rho}u_t\|_{L^{2}(\Omega)}^{2})+C(\rho^{*},\eta)(\|\nabla u\|_{L^{2}(\Omega)}^{4}+\|u\|_{L^{r}(\Omega)}^{\frac{4r}{r-6}}+\|
		\nabla \mu\|_{L^{2}(\Omega)}^{4}+\|u\|_{L^{r}(\Omega)}^{\frac{36+2r}{6+r}}\|\nabla u\|_{L^{2}(\Omega)}^{2}) \\
		&\leqslant\eta(\|\phi_t\|_{L^{6}(\Omega)}^{2}+\|\sqrt{\rho}u_t\|_{L^{2}(\Omega)}^{2})+C(\rho^{*},\eta)\|\nabla \mu\|_{L^{2}(\Omega)}^{4}+C(\rho^{*},\eta)(\|\nabla u\|_{L^{2}(\Omega)}^{2} +\|u\|_{L^{r}(\Omega)}^{\frac{4r}{r-6}})(1+\|\nabla u\|_{L^{2}(\Omega)}^{2}),\\
		|I_5 |&=|\int_{\Omega}\rho\Psi^{\prime\prime}(\phi)\phi_t u\cdot \nabla\phi dx |  \\
		&\leqslant  C(\rho^{*})\|\nabla\phi\|_{L^{2}(\Omega)}\|u\|_{L^{r}(\Omega)}\| \phi_t \|_{L^{6}(\Omega)}\|\Psi^{\prime\prime} (\phi)\|_{L^{\frac{3r}{r-3}}(\Omega)}\\
		&\leqslant C(\rho^{*})\|u\|_{L^{r}(\Omega)}\|\phi_t\|_{L^{6}(\Omega)}(1+\|\phi\|_{L^{12}(\Omega)}^{2}) \\
		&\leqslant C(\rho^{*})\|u\|_{L^{r}(\Omega)}\|\phi_t\|_{L^{6}(\Omega)}(1+\|\phi\|_{L^{2}(\Omega)}^{\frac{7}{6}}\|\phi\|_{W^{2,6}(\Omega)}^{\frac{5}{6}}) \\
		&\leqslant\eta\| \phi_t \|^{2}_{L^{6}(\Omega)}+C(\rho^{*},\eta)\|u\|^{2}_{L^{r}(\Omega)}(1+\|\nabla\mu\|_{L^{2}(\Omega)}^{\frac{5}{3}}),\\
		|I_6 |& = |\int_{\Omega}\rho\Psi^{\prime}(\phi)u\cdot\nabla\phi_t dx|\\
		&\leqslant  C(\rho^{*})\|\Psi^{\prime}(\phi)\|_{L^{\frac{2r}{r-2}}(\Omega)}\|u\|_{L^{r}(\Omega)}\|\nabla\phi_t \|_{L^{2}(\Omega)}\\
		&\leqslant\eta\|\nabla\phi_t \|_{L^{2}(\Omega)}^{2}+C(\rho^{*},\eta)\|u\|_{L^{r}(\Omega)}^{2}(\|\phi\|_{L^{\infty}(\Omega)}^{3}+1)^{2}\\
		&\leqslant\eta\|\nabla\phi_t \|_{L^{2}(\Omega)}^{2}+C(\rho^{*},\eta)\|u\|_{L^{r}(\Omega)}^{2}(1+\|\phi\|_{L^{2}(\Omega)}^{\frac{3}{2}}\|\phi\|_{W^{2,6}(\Omega)}^{\frac{3}{2}})^{2}\\
		&\leqslant\eta\|\nabla\phi_t \|_{L^{2}(\Omega)}^{2}+C(\rho^{*},\eta)\|u\|_{L^{r}(\Omega)}^{2}(1+\|\nabla\mu\|_{L^{2}(\Omega)}^{3}),\\
		|I_7 |& = |\int_{\Omega}\rho\phi_t u\cdot \nabla\mu dx|\\
		&\leqslant  C(\rho^{*})\| \phi_t \|_{L^{\frac{2r}{r-2}}(\Omega)}\|u\|_{L^{r}(\Omega)}\|\nabla\mu\|_{L^{2}(\Omega)}\\
		&\leqslant\eta\| \phi_t \|_{L^{6}(\Omega)}^{2}+C(\rho^{*},\eta)\|u\|_{L^{r}(\Omega)}^{2}\|\nabla\mu\|_{L^{2}(\Omega)}^{2},\\
		|I_8 |& = |\int_{\Omega}\rho(u\cdot\nabla\mu)(u\cdot\nabla\phi) dx|\\
		&\leqslant  C(\rho^{*})\|u\|_{L^{6}(\Omega)}^{2}\|\nabla\mu\|_{L^{2}(\Omega)}\|\nabla\phi\|_{L^{6}(\Omega)}\\
		&\leqslant  C(\rho^{*})\|\nabla u\|_{L^{2}(\Omega)}^{2}\|\phi\|_{H^{2}(\Omega)}\|\nabla\mu\|_{L^{2}(\Omega)}\\
		&\leqslant  C(\rho^{*})\|\nabla u\|_{L^{2}(\Omega)}^{2}(1+\|\nabla\mu\|_{L^{2}(\Omega)}^{2}),\\
		|I_9 |&= |\int_{\Omega}\rho\mu u\cdot\nabla(u\cdot\nabla\phi) dx|\\
		&\leqslant  C(\rho^{*})\big(\|\mu\|_{L^{6}(\Omega)}\|u\|_{L^{6}(\Omega)}\|\nabla u\|_{L^{2}(\Omega)}\|\nabla\phi\|_{L^{6}(\Omega)}+\|\mu\|_{L^{6}(\Omega)}\|u\|_{L^{6}(\Omega)}\|u\|_{L^{2}(\Omega)}\|\nabla^{2}\phi\|_{{L^6}(\Omega)}\big)\\
		&\leqslant  C(\rho^{*})(1+\|\nabla\mu\|_{L^{2}(\Omega)}^{2})(\|\nabla u\|_{L^{2}(\Omega)}^{2}+1),\\
		|I_{10}|&= |2\int_{\Omega}\rho\mu u\cdot\nabla\phi_t dx|\\
		&\leqslant  C(\rho^{*})\|\mu\|_{L^{\frac{2r}{r-2}}(\Omega)}\|u\|_{L^{r}(\Omega)}\|\nabla\phi_t \|_{L^{2}(\Omega)}\\
		&\leqslant  \eta\|\nabla\phi_t \|_{L^{2}(\Omega)}^{2}+C(\rho^{*},\eta)\|u\|_{L^{r}(\Omega)}^{2}\|\mu\|_{H^{1}(\Omega)}^{2},\\
		&\leqslant \eta\|\nabla\phi_t \|_{L^{2}(\Omega)}^{2}+C(\rho^{*},\eta)\|u\|_{L^{r}(\Omega)}^{2}(1+\|\nabla\mu\|_{L^{2}(\Omega)}^{2}),\\
		|I_{11}|&= |2\int_{\Omega}\rho u\cdot\nabla\phi\phi_t dx|\\
		&\leqslant   C(\rho^{*})\|\sqrt{\rho} \phi_t \|_{L^{2}(\Omega)}\|\nabla\phi\|_{L^{\frac{2r}{r-2}}(\Omega)}\|u\|_{L^{r}(\Omega)}\\
		&\leqslant\eta\|\sqrt{\rho} \phi_t\|_{L^{2}(\Omega)}^{2}+C(\rho^{*},\eta)\|\phi\|_{W^{2,6}(\Omega)}^{2}\|u\|_{L^{r}(\Omega)}^{2}\\
		&\leqslant\eta\|\sqrt{\rho}
		\phi_t\|_{L^{2}(\Omega)}^{2}+C(\rho^{*},\eta)\|u\|_{L^{r}(\Omega)}^{2}(1+\|\nabla\mu\|_{L^{2}(\Omega)}^{2}),\\
		|I_{12} |&= |2\int_{\Omega}\nabla\mu\cdot\nabla\phi_t dx|\\
		&\leqslant  C\|\nabla\mu\|_{L^{2}(\Omega)}\|\nabla\phi_t \|_{L^{2}(\Omega)}\\
		&\leqslant  \eta\|\nabla\phi_t \|_{L^{2}(\Omega)}^{2}+C(\eta)\|\nabla\mu\|_{L^{2}(\Omega)}^{2}
	\end{align*}
	for any $\eta\in (0,1).$ Moreover, it is deduced from Lemma \ref{lem2-2} that
	\begin{equation}\label{3.20}
		\|\phi_t\|_{L^{6}(\Omega)}\leqslant C\big(\|\nabla\phi_t\|_{L^{2}(\Omega)}+(\int_{\Omega}\rho|\phi_t|^{2} dx)^{\frac{1}{2}}\big).
	\end{equation}
	Applying \eqref{3.20} to the estimate for $I_5,$ collecting all the estimates for $I_i$ and taking suitable small $\eta\in(0,1),$ one concludes that
	\begin{align}\label{3.21}
		\begin{split}
			&\frac{d}{dt}\left\{\int_{\Omega}\frac{v(\phi)}{2}|\mathbb{D}u |^{2} dx+\frac{1}{2}\|\nabla\mu\|_{L^{2}(\Omega)}^{2}+\int_{\Omega}\rho\mu u\cdot\nabla\phi  dx\right\}\\
			&\quad+\int_{\Omega}|\nabla\phi_t  |^{2} dx+\int_{\Omega}\rho| \phi_t  |^{2} dx+\int_{\Omega}\rho|u_t  |^{2} dx\\
			&\leqslant  C(\rho^{*})(1+\|u\|_{L^{r}(\Omega)}^{\frac{4r}{r-6}}+\|\nabla\mu\|_{L^{2}(\Omega)}^{2}+\|\nabla u\|_{L^{2}(\Omega)}^{2}) (1+\|\nabla\mu\|_{L^{2}(\Omega)}^{2}+\|\nabla u\|_{L^{2}(\Omega)}^{2}).
		\end{split}
	\end{align}
	Observe that
	\begin{align*}
		|\int_{\Omega}\rho\mu u\cdot\nabla\phi dx \rvert&\leqslant  C(\rho^{*})\|\nabla\phi\|_{L^{2}(\Omega)}\|u\|_{L^{3}(\Omega)}\|\mu\|_{L^{6}(\Omega)}\\
		&\leqslant C(\rho^{*})\|\nabla u\|_{L^{2}(\Omega)}\|\mu\|_{H^{1}(\Omega)}\\
		&\leqslant  C(\rho^{*})\|\nabla u\|_{L^{2}(\Omega)}(1+\|\nabla\mu\|_{L^{2}(\Omega)})\\
		&\leqslant  \int_{\Omega}\frac{v(\phi)}{4}|\mathbb{D}u |^{2} dx+\frac{1}{4}\|\nabla\mu\|_{L^{2}(\Omega)}^{2}+C(\rho^{*}),
	\end{align*}
	implies that
	\begin{eqnarray}\label{3.22}
		&&\int_{\Omega}\frac{v(\phi)}{2}|\mathbb{D}u |^{2} dx+\frac{1}{2}\|\nabla\mu\|_{L^{2}(\Omega)}^{2}+\int_{\Omega}\rho\mu u\cdot\nabla\phi dx\nonumber\\
		&&\geqslant\int_{\Omega}\frac{v(\phi)}{2}|\mathbb{D}u |^{2} dx+\frac{1}{2}\|\nabla\mu\|_{L^{2}(\Omega)}^{2}-\int_{\Omega}\frac{v(\phi)}{4}|\mathbb{D}u|^{2} dx-\frac{1}{4}\|\nabla\mu\|_{L^{2}(\Omega)}^{2}
		-C(\rho^{*})\nonumber\\
		&&\geqslant\int_{\Omega}\frac{v(\phi)}{4}|\mathbb{D}u |^{2} dx+\frac{1}{4}\|\nabla\mu\|_{L^{2}(\Omega)}^{2}-C(\rho^{*})\nonumber\\
		&&\geqslant C_1(\|\nabla u\|_{L^{2}(\Omega)}^{2}+\|\nabla\mu\|_{L^{2}(\Omega)}^{2})-C(\rho^{*})
	\end{eqnarray}
	with $C_1=\min\{v_*,1\}.$	Taking similar argument, one finds that
	\begin{align*}
			\int_{\Omega}\frac{v(\phi)}{2}|\mathbb{D}u |^{2} dx+\frac{1}{2}\|\nabla\mu\|_{L^{2}(\Omega)}^{2}+\int_{\Omega}\rho\mu u\cdot\nabla\phi dx\leqslant\int_{\Omega}\frac{3v(\phi)}{4}|\mathbb{D}u |^{2} dx+\frac{3}{4}\|\nabla\mu\|_{L^{2}(\Omega)}^{2}+C(\rho^{*}).
	\end{align*}
	Combining \eqref{3.21} with \eqref{3.22}, one eventually obtains that
	\begin{align}\label{3.23}
		\begin{split}
			&\frac{d}{dt}\left\{\|\nabla u\|_{L^{2}(\Omega)}^{2}+\|\nabla \mu\|_{L^{2}(\Omega)}^{2}+1\right\}+\int_{\Omega}|\nabla\phi_t  |^{2} dx+\int_{\Omega}\rho| \phi_t  |^{2} dx+\int_{\Omega}\rho|u_t  |^{2} dx\\
			&\leqslant  C(\rho^{*}) (\|u\|_{L^{r}(\Omega)}^{\frac{4r}{r-6}}+1+\|\nabla\mu\|_{L^{2}(\Omega)}^{2}+\|\nabla u\|_{L^{2}(\Omega)}^{2})(1+\|\nabla\mu\|_{L^{2}(\Omega)}^{2}+\|\nabla u\|_{L^{2}(\Omega)}^{2}).
		\end{split}
	\end{align}
	So,
	\begin{align*}
		\begin{split}
			&\frac{d}{dt}\left\{\|\nabla u\|_{L^{2}(\Omega)}^{2}+\|\nabla \mu\|_{L^{2}(\Omega)}^{2}+1\right\}\\
			&\leqslant C(\rho^*) (\|u\|_{L^{r}(\Omega)}^{\frac{4r}{r-6}}+1+\|\nabla\mu\|_{L^{2}(\Omega)}^{2}+\|\nabla u\|_{L^{2}(\Omega)}^{2})(1+\|\nabla\mu\|_{L^{2}(\Omega)}^{2}+\|\nabla u\|_{L^{2}(\Omega)}^{2}).
		\end{split}
	\end{align*}
	According to Lemma \ref{lem3-3} and Gagliardo-Nirenberg inequality, one deduces from \eqref{3.23} and \eqref{3.1} that
	\begin{align}\label{3.24}
		\begin{split}
			&\sup_{t\in [0,T]}(\|\nabla u\|_{L^{2}(\Omega)}^{2}+\|\nabla \mu\|_{L^{2}(\Omega)}^{2})\\
			&\leqslant exp\left(C(\rho^*){\int_{0}^{T}(\|u\|_{L^{r}(\Omega)}^{\frac{4r}{r-6}}+1+\|\nabla\mu\|_{L^{2}(\Omega)}^{2}+\|\nabla u\|_{L^{2}(\Omega)}^{2}) dt}\right)\cdot\\
			&\quad(\|\nabla u_0\|_{L^{2}(\Omega)}^{2}+\|\nabla \mu_0\|_{L^{2}(\Omega)}^{2}+1)\\
			&\leqslant C(\rho^*).
		\end{split}
	\end{align}
	By combining \eqref{3.23} with \eqref{3.24}, we obtain
	\begin{align*}
		\begin{split}
			\sup_{t\in [0,T]}(\|\nabla u\|_{L^{2}(\Omega)}^{2}+\|\nabla\mu\|_{L^{2}(\Omega)}^{2})+\int_{0}^{T}\int_{\Omega}(|\nabla \phi_{t} \rvert^{2}+\rho\phi_t^{2}+\rho | u_{t}\rvert^{2}) dx dt
			\leqslant C(\rho^*).
		\end{split}
	\end{align*}
	Using Lemma \ref{lem3-1} and Lemma \ref{lem3-3} again, one obtains from \eqref{3.23} that
	\begin{align*}
			&\sup_{t \in [0,T]}\big(\|\nabla u\|_{L^{2}(\Omega)}^{2}+\|\nabla \mu\|_{L^{2}(\Omega)}^{2}+\|\phi\|_{W^{2,6}(\Omega)}^{2} \big)+ \int_{0}^{T}(\| \phi_t\|_{H^{1}(\Omega)}^{2}+\| u_t \|_{L^{2}(\Omega)}^{2}+\|u\|_{H^{2}(\Omega)}^{2}) dt\leqslant C(\rho^*).
	\end{align*}
	The proof of Lemma \ref{lem3-4} is completed.
\end{proof}

\section{Strong solution: global well-posedness}\label{S4}

In this section, we prove Theorem \ref{thm1.2} on the existence of global strong solutions to the problem \eqref{1.1}-\eqref{1.4}. To extend the short time strong solution to be a global one, it is essential to establish the uniform estimates. Thus, we devote ourselves to obtain the global-in-time a priori estimates, which is analogous to the blowup criterion established in Section \ref{S3}. To make the content self-contained, a detailed proof is given as follows. The proof of existence relies on the abstract bootstrap argument and suitable {\sl a priori} energy estimates.

\begin{lemma}\label{lem4-1}
	Under the assumption of Theorem \ref{thm1.2}, let $(\rho,u,\phi,\mu)$ be a strong solution to the problem \eqref{1.1}-\eqref{1.4} on $(0,T].$ Then there exists a positive constant $C,$ depending only on $|\Omega|, \ \|\phi_0\|_{L^2(\Omega)}, \  \|\mu_0\|_{L^2(\Omega)},$ such that
	\begin{equation}\label{4.1}
		0<\frac{\rho_*}{C}\leqslant \rho(x,t)\leqslant C\epsilon_0
	\end{equation}
	and
	\begin{align}\label{4.2}
			\sup_{t\in [0,T]}\int_{\Omega}\big(\frac{\rho u^{2}}{2}+\frac{\rho(\phi^{2}-1)^{2}}{4}+\frac{|\nabla \phi|^{2}}{2}\big) dx+\int_{0}^{T}\int_{\Omega}\big(v(\phi)|{\mathbb D} u|^{2}+|\nabla \mu |^{2}\big) dx dt\leqslant  C\epsilon_0,
	\end{align}
	where $\epsilon_0$ is a small positive constant in \eqref{1.12}.
\end{lemma}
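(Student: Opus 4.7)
The plan is to repeat the derivations of Lemma~\ref{lem3-1} and Lemma~\ref{lem3-2} essentially verbatim, but now exploiting the \emph{smallness} hypothesis \eqref{1.12} everywhere the pointwise upper bound $\rho^{*}$ of the density previously entered. The additional work, compared with the blow-up section, is to show that the \emph{initial} energy is itself of order $\epsilon_0$, and this is the place where the structure of the initial chemical potential identity, together with the Landau form of $\Psi$, must be exploited.

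\textbf{Steps 1 and 2.} For \eqref{4.1}, the transport equation $\rho_t+u\cdot\nabla\rho=0$ together with $\operatorname{div} u=0$ preserves the $L^\infty$ bounds of $\rho$ along the Lagrangian flow exactly as in Lemma~\ref{lem3-1}; reading \eqref{1.12} as $\|\rho_0\|_{L^\infty(\Omega)}\leqslant\epsilon_0$ yields $0<\rho_*/C\leqslant\rho(x,t)\leqslant C\epsilon_0$ immediately. Next I would reproduce the computation of Lemma~\ref{lem3-2} (test $\eqref{1.1}_2$ with $u$, $\eqref{1.1}_4$ with $\mu$, use the Korteweg cancellation and $\eqref{1.1}_5$) to obtain
\begin{equation*}
\int_{\Omega}\Bigl(\tfrac{1}{2}\rho|u|^{2}+\tfrac{1}{2}|\nabla\phi|^{2}+\rho\Psi(\phi)\Bigr)\,dx+\int_0^t\!\int_{\Omega}\bigl(v(\phi)|\D u|^{2}+|\nabla\mu|^{2}\bigr)\,dx\,d\tau=\mathcal{E}_0,
\end{equation*}
where $\mathcal{E}_0:=\int_\Omega\bigl(\tfrac{1}{2}\rho_0|u_0|^{2}+\tfrac{1}{2}|\nabla\phi_0|^{2}+\rho_0\Psi(\phi_0)\bigr)\,dx$.

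\textbf{Step 3: proving $\mathcal{E}_0\leqslant C\epsilon_0$.} The kinetic piece is easy: $\rho_0\leqslant\epsilon_0$, Poincar\'e and \eqref{1.12} yield $\int_\Omega\rho_0|u_0|^{2}\,dx\leqslant\epsilon_0\|u_0\|_{L^{2}(\Omega)}^{2}\leqslant C\epsilon_0\|\nabla u_0\|_{L^{2}(\Omega)}^{2}\leqslant C\epsilon_0^{3}$. To control the \emph{unweighted} term $\|\nabla\phi_0\|_{L^{2}(\Omega)}^{2}$, I would test the elliptic identity $\rho_0\mu_0=-\Delta\phi_0+\rho_0\Psi'(\phi_0)$ against $\phi_0$, integrate by parts using $\partial_n\phi_0=0$, and exploit $\Psi'(s)s=s^{4}-s^{2}$ to get
\begin{equation*}
\|\nabla\phi_0\|_{L^{2}(\Omega)}^{2}+\int_{\Omega}\rho_0\phi_0^{4}\,dx=\int_{\Omega}\rho_0\mu_0\phi_0\,dx+\int_{\Omega}\rho_0\phi_0^{2}\,dx\leqslant\epsilon_0\bigl(\|\mu_0\|_{L^{2}(\Omega)}\|\phi_0\|_{L^{2}(\Omega)}+\|\phi_0\|_{L^{2}(\Omega)}^{2}\bigr)\leqslant C\epsilon_0,
\end{equation*}
with $C$ depending only on the quantities allowed in the statement. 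The Landau term $\int_\Omega\rho_0\Psi(\phi_0)\,dx=\tfrac{1}{4}\int_\Omega\rho_0(\phi_0^{4}-2\phi_0^{2}+1)\,dx$ is then bounded by the just-derived control of $\int\rho_0\phi_0^{4}$ together with $\epsilon_0\|\phi_0\|_{L^{2}(\Omega)}^{2}+\epsilon_0|\Omega|$, hence is also $\leqslant C\epsilon_0$. Summing the three contributions gives $\mathcal{E}_0\leqslant C\epsilon_0$, and inserting this into the energy identity from Step~2 yields \eqref{4.2}.

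\textbf{Main obstacle.} The one genuinely delicate point is the bound on $\|\nabla\phi_0\|_{L^{2}(\Omega)}$: because this term is \emph{not} weighted by $\rho_0$, pointwise smallness of $\rho_0$ does not transfer to it automatically. The test against $\phi_0$ succeeds precisely because the Landau choice \eqref{1.2} produces the sign-definite quartic $\int_\Omega\rho_0\phi_0^{4}\,dx$ on the left-hand side, which absorbs the otherwise uncontrolled $\rho_0\Psi'(\phi_0)\phi_0$ contribution on the right. This is exactly the place where the specific Landau form of the potential is indispensable for the global-in-time argument; once it is handled, everything else reduces to the routine estimates of Section~\ref{S3}.
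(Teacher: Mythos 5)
Your proposal is correct and follows essentially the same route as the paper: derive the basic energy identity, then use the constraint $\rho_0\mu_0=-\Delta\phi_0+\rho_0\Psi'(\phi_0)$ tested against $\phi_0$ (with $\partial_n\phi_0=0$) together with the pointwise smallness $\rho_0\leqslant\epsilon_0$ to show the initial energy is of order $\epsilon_0$. The only difference is cosmetic: you keep the sign-definite quartic $\int_\Omega\rho_0\phi_0^4\,dx$ and bound $\rho_0\Psi(\phi_0)$ term by term, whereas the paper works directly with $\int_\Omega\rho_0(\phi_0^2-1)^2\,dx$ and absorbs it via Young's inequality — both yield the same bound $\mathcal{E}_0\leqslant C\epsilon_0$ and hence \eqref{4.2}.
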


\begin{proof}
	It is easy to get \eqref{4.1}. Taking same argument for \eqref{3.8}, one arrives at
	\begin{align*}
		\begin{split}
			&\int_{\Omega}\big(\frac{1}{2}\rho|u|^{2}+\frac{1}{2}|\nabla\phi|^{2}+\rho\Psi(\phi)\big) dx
			+\int_{0}^{t}\int_{\Omega}\big(v(\phi)| \mathbb{D}u|^{2}+|\nabla\mu|^{2}\big) dxd\tau \\
			&=\int_{\Omega}\frac{1}{2}\rho_0 |u_0 |^{2} + \frac{1}{2}|\nabla\phi_0 |^{2}+\rho_0 \Psi(\phi_0) dx.
		\end{split}
	\end{align*}
	Next, one takes into account
	$$\mu_0=-\frac{\Delta \phi_0}{\rho_0}+\Psi^\prime(\phi_0)$$
	and finds that
	
	$$\int_{\Omega}|\nabla\phi_0|^2 dx+\int_{\Omega}\rho_0(\phi_{0}^{2}-1)\phi_{0}^{2} dx = \int_{\Omega}\rho_{0}\mu_{0}\phi_{0} dx.$$
	So,
	\begin{eqnarray*}
		&&\int_{\Omega}\rho_0 (\phi_{0}^{2}-1)^{2} dx+\int_{\Omega}|\nabla\phi_0 |^{2} dx=\int_{\Omega}\rho_0\mu_0\phi_0 dx-\int_{\Omega}\rho_0(\phi_{0}^{2}-1) dx\\
		&&\leqslant\epsilon_0\|\mu_0\|_{L^{2}(\Omega)}\|\phi_0\|_{L^{2}(\Omega)}
		+\frac{1}{2}\int_{\Omega}\rho_0(\phi_{0}^{2}-1)^{2} dx+\frac{1}{2}\int_{\Omega}\rho_0  dx\\
		&&\leqslant C\epsilon_0 +\frac{1}{2}\int_{\Omega}\rho_0(\phi_{0}^{2}-1)^{2} dx,
	\end{eqnarray*}
	which implies that
	\begin{eqnarray}\label{4.3}
		&&\int_{\Omega}\frac{1}{4}\rho_0 (\phi_{0}^{2}-1)^{2} dx+\int_{\Omega}\frac{1}{2}|\nabla\phi_0 |^{2} dx
		\leqslant C\epsilon_0.
	\end{eqnarray}
	Moreover, one gets that
	\begin{eqnarray}\label{4.4}
		\frac{1}{2}\int_{\Omega}\rho_0 |u_0 |^{2} dx\leqslant \epsilon_0\|u_0\|^{2}_{L^{2}(\Omega)}\leqslant \epsilon_0\|\nabla u_0\|_{L^{2}(\Omega)}^{2}\leqslant \epsilon_0^{3}.
	\end{eqnarray}
	Thus, one deduces from 	\eqref{4.3} and \eqref{4.4} that
	\begin{eqnarray}\label{4-0}
		&&\int_{\Omega}\big(\frac{1}{2}\rho_0 |u_0 |^{2} + \frac{1}{2}|\nabla\phi_0 |^{2}+\rho_0 \Psi(\phi_0)\big) dx\nonumber\\
		&&=\int_{\Omega}\frac{1}{2}\rho_0 |u_0 |^{2} dx+\int_{\Omega}\frac{1}{4}\rho_0 (\phi_{0}^{2}-1)^{2} dx+\int_{\Omega}\frac{1}{2}|\nabla\phi_0 |^{2} dx\nonumber\\
		&&\leqslant C\epsilon_0.
	\end{eqnarray}
	Therefore, one obtains that
	\begin{align*}
			\int_{\Omega}\big(\frac{1}{2}\rho|u|^{2}+\frac{1}{2}|\nabla\phi|^{2}+\rho\Psi(\phi)\big) dx+\int_{0}^{t}\int_{\Omega}\big(v(\phi)| \mathbb{D}u|^{2}+|\nabla\mu|^{2}\big) dxdt
			\leqslant C\epsilon_0.
	\end{align*}
	This completes the proof of Lemma \ref{lem4-1}.
\end{proof}

Next, we aim to prove the following key a priori estimates, which mainly concerns the uniform bound.

\begin{lemma}\label{lem4-2}
	Under the assumption of Theorem \ref{thm1.2}, let $(\rho,u,\phi,\mu)$ be a strong solution to the problem \eqref{1.1}-\eqref{1.4} on $(0,T].$
	Then there exists a positive constant $C,$ depending only on $|\Omega|,\|\phi_0\|_{L^2(\Omega)}, \|\mu_0\|_{L^2(\Omega)},$ such that
	\begin{eqnarray}
		&&\sup_{t\in [0,T]}\|\phi\|_{H^{1}(\Omega)}\leqslant  C\epsilon_0,\label{4.5}\\
		&&\|\mu\|_{H^{1}(\Omega)}\leqslant  C\|\nabla \mu\|_{L^{2}(\Omega)},\label{4.6}\\
		&&\|\phi\|_{W^{2,6}(\Omega)}\leqslant  C\epsilon_0\|\nabla\mu\|_{L^{2}(\Omega)}, \ \label{4.7} \\
		&&\|u\|_{H^{2}(\Omega)}\leqslant  C\big(\epsilon_0^2\|\nabla \mu\|_{L^{2}(\Omega)}^{\frac{5}{4}}
		+\epsilon_0^\frac{1}{2}\|\sqrt{\rho} u_t \|_{L^{2}(\Omega)}+\epsilon_0^2\|\nabla u\|_{L^{2}(\Omega)}^{3}\big), \ \  \label{4.8}
	\end{eqnarray}
	where $\epsilon_0$ is a small positive constant in \eqref{1.12}.
\end{lemma}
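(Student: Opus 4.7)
The plan is to mirror the four-step structure of Lemma \ref{lem3-3}, this time tracking how the two new small parameters supplied by the hypothesis \eqref{1.12} propagate through each estimate: the pointwise density bound $\rho\leqslant C\epsilon_0$ from \eqref{4.1}, and the $O(\epsilon_0)$-smallness of the basic energy from \eqref{4.2}. Each of the four inequalities \eqref{4.5}--\eqref{4.8} is the analogue of the corresponding inequality \eqref{3.9}--\eqref{3.12}, so the scheme of estimates is essentially identical; the work is bookkeeping the powers of $\epsilon_0$.

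For \eqref{4.5}, I would start from \eqref{4.2} to get $\int_\Omega\rho(\phi^2-1)^2\leqslant C\epsilon_0$ and $\|\nabla\phi\|_{L^2}^2\leqslant C\epsilon_0$, expand the double-well term to absorb $\int_\Omega\rho\phi^2$ into $\frac12\int_\Omega\rho\phi^4$ as in Lemma \ref{lem3-3}, and obtain $\int_\Omega\rho\phi^4\leqslant C\epsilon_0$. H\"older with the total-mass bound $\int_\Omega\rho\leqslant|\Omega|\epsilon_0$ then yields $\int_\Omega\rho|\phi|\leqslant C\epsilon_0^{1/4}\epsilon_0^{3/4}=C\epsilon_0$. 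Since the lower mass bound $\int_\Omega\rho\geqslant\rho_*|\Omega|>0$ holds (independent of $\epsilon_0$), Lemma \ref{lem2-2} applies and delivers the stated $\epsilon_0$-small bound on $\|\phi\|_{H^1}$. For \eqref{4.6}, I would test $\eqref{1.1}_5$ against $\mu$, exactly as in \eqref{3.14}, and use the already-established small bounds on $\|\nabla\phi\|_{L^2}$ and $\|\sqrt\rho(\phi^2-1)\|_{L^2}$ to get $\int_\Omega\rho\mu^2\leqslant C\|\nabla\mu\|_{L^2}^2$ plus absorbable terms; Cauchy--Schwarz then bounds $\int_\Omega\rho|\mu|$ by $(\int\rho)^{1/2}(\int\rho\mu^2)^{1/2}\leqslant C\epsilon_0^{1/2}\|\nabla\mu\|_{L^2}$, and a final application of Lemma \ref{lem2-2} gives \eqref{4.6}.

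The remaining two bounds use the elliptic and Stokes regularity theory. For \eqref{4.7}, I would apply Lemma \ref{lem2-3} to the rewritten equation $-\Delta\phi=\rho\mu-\rho\Psi'(\phi)$ in $L^6$; the factor $\rho\leqslant C\epsilon_0$ comes out in front of both terms on the right, so $\|\rho\mu\|_{L^6}\leqslant C\epsilon_0\|\mu\|_{H^1}\leqslant C\epsilon_0\|\nabla\mu\|_{L^2}$ by \eqref{4.6}, while $\|\rho\Psi'(\phi)\|_{L^6}$ is controlled via the embedding $W^{2,6}\hookrightarrow L^\infty$ and an interpolation that leaves a small multiple of $\|\phi\|_{W^{2,6}}$ to be absorbed on the left. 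For \eqref{4.8}, I would apply Lemma \ref{lem2-4} to the momentum equation $\eqref{1.1}_2$, with forcing $-\Delta\phi\,\nabla\phi-\nabla^2\phi\cdot\nabla\phi-\rho u_t-\rho(u\cdot\nabla)u$; the Korteweg term is estimated by $\|\phi\|_{W^{2,6}}\|\nabla\phi\|_{L^3}$ and a Gagliardo--Nirenberg interpolation $\|\nabla\phi\|_{L^3}\leqslant C\|\phi\|_{W^{2,6}}^{1/4}\|\nabla\phi\|_{L^2}^{3/4}$, into which I insert \eqref{4.5} and \eqref{4.7} to produce the $\epsilon_0^2\|\nabla\mu\|_{L^2}^{5/4}$ term; the $\rho u_t$ term yields $\epsilon_0^{1/2}\|\sqrt\rho u_t\|_{L^2}$ because $\sqrt\rho\leqslant C\epsilon_0^{1/2}$; and the convective term produces $\|u\|_{H^2}^{1/2}\|u\|_{L^6}^{3/2}$ multiplied by $\epsilon_0$ thanks to the density factor, whose $\|u\|_{H^2}^{1/2}$ is absorbed to the left after Young's inequality, with the remainder contributing $\epsilon_0^2\|\nabla u\|_{L^2}^3$ through $\|u\|_{L^6}\lesssim\|\nabla u\|_{L^2}$.

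The main obstacle is not in any single estimate but in the careful bookkeeping: every time one uses $\|\phi\|_{H^1}$, $\|\mu\|_{H^1}$ or $\|\phi\|_{W^{2,6}}$ inside a higher-order bound, one must insert the small factor $\epsilon_0$ (or a power of it) from the preceding inequality and check that the resulting interpolations still leave enough room to absorb the implicit terms $\eta\|\phi\|_{W^{2,6}}$ and $\eta\|u\|_{H^2}$ into the left-hand sides. Getting the powers $\epsilon_0^2$ in \eqref{4.7}--\eqref{4.8} requires using the $\epsilon_0$-sharp version of \eqref{4.5} (not merely $\sqrt{\epsilon_0}$) together with \eqref{4.6}, and it is precisely the combination of the density-weighted Poincar\'e-type inequality \ref{lem2-2} with the elliptic theory \ref{lem2-3}--\ref{lem2-4} that permits this sharpening.
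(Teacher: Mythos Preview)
Your plan for \eqref{4.5}, \eqref{4.7}, and \eqref{4.8} matches the paper's argument: same interpolations, same invocation of Lemmas~\ref{lem2-2}, \ref{lem2-3}, \ref{lem2-4}, same absorption of $\eta\|\phi\|_{W^{2,6}}$ and $\eta\|u\|_{H^2}$ to the left. The one substantive divergence is your treatment of \eqref{4.6}, and there the route you propose does not close.

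If you test $\eqref{1.1}_5$ against $\mu$ as in \eqref{3.14}, you obtain
\[
\tfrac12\int_\Omega\rho\mu^2\,dx \;\leqslant\; \|\nabla\phi\|_{L^2}\|\nabla\mu\|_{L^2} \;+\; \tfrac12\|\sqrt\rho\,\phi(\phi^2-1)\|_{L^2}^2,
\]
and inserting the small bounds from \eqref{4.2} leaves a residual $C\epsilon_0$ on the right that is an \emph{additive} constant, not something absorbable into $\int\rho\mu^2$ or $\|\nabla\mu\|_{L^2}^2$. Carrying this through Cauchy--Schwarz and Lemma~\ref{lem2-2} yields only $\|\mu\|_{H^1}\leqslant C(\|\nabla\mu\|_{L^2}+\epsilon_0)$, which is strictly weaker than the stated \eqref{4.6} and, once fed into \eqref{4.7}, produces $\|\phi\|_{W^{2,6}}\leqslant C\epsilon_0\|\nabla\mu\|_{L^2}+C\epsilon_0^2$ rather than the clean bound claimed.

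The paper bypasses \eqref{3.14} altogether for this step. It applies Lemma~\ref{lem2-2} directly to $\mu$ and uses the pointwise density bound $\rho\leqslant C\epsilon_0$ to write $\int_\Omega\rho|\mu|\,dx\leqslant\|\rho\|_{L^2}\|\mu\|_{L^2}\leqslant C\epsilon_0\|\mu\|_{L^2}$, so that
\[
\|\mu\|_{L^2}^2\;\leqslant\; C\|\nabla\mu\|_{L^2}^2 + C\epsilon_0^2\,\|\mu\|_{L^2}^2.
\]
Now the extra term \emph{is} absorbable for $\epsilon_0$ small, and the pure Poincar\'e-type inequality \eqref{4.6} follows with no additive remainder. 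This is what makes the factor in \eqref{4.7} exactly $\epsilon_0$ and, in turn, the $\epsilon_0^2$ prefactors in \eqref{4.8} come out as stated.
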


\begin{proof}
	Firstly, it is deduced from Lemma \ref{lem4-1} that
	$$\int_{\Omega}(\rho\phi^{4}+\rho) dx\leqslant C\epsilon_0+2\int_{\Omega}\rho\phi^{2} dx\leqslant C\epsilon_0+\int_{\Omega}\frac{1}{2}\rho\phi^{4} dx,$$
	and so
	$$\int_{\Omega}\rho\phi^{4} dx\leqslant C\epsilon_0.$$
	Thus, one gets that
	\begin{equation}\label{4.9}\int_{\Omega}\rho|\phi \rvert  dx\leqslant \big(\int_{\Omega}\rho|\phi |^{4} dx\big)^{\frac{1}{4}}\big(\int_{\Omega}\rho dx\big)^{\frac{3}{4}}\leqslant  C\epsilon_0.
	\end{equation}
	Combining \eqref{4.2} and \eqref{4.9}, one obtains from Lemma \ref{lem2-2} that
	$$\sup_{t\in [0,T]}\|\phi\|_{H^{1}(\Omega)}\leqslant  C\epsilon_0.$$
	
	Secondly, one deduces from Lemma \ref{lem2-2} that
	\begin{align}\label{4.10}
			\|\mu\|_{L^{2}}^{2}\leqslant C\big(\|\nabla\mu\|_{L^{2}(\Omega)}^{2}+(\int_{\Omega}\rho|\mu | dx)^{2}\big)\leqslant C\|\nabla \mu\|_{L^{2}(\Omega)}^{2}+C\epsilon_0^{2}\|\mu\|_{L^{2}(\Omega)}^{2}.
	\end{align}
	Taking $\epsilon_0$ small enough such that $C\epsilon_0^{2}<\frac12,$ one gets \eqref{4.6} immediately.
	
	Thirdly, applying Lemma \ref{lem2-3} to $\eqref{1.1}_5$,  one obtains from \eqref{4.6} that
	\begin{align*}
		\begin{split}
			\|\phi\|_{W^{2,6}(\Omega)}&\leqslant  C(\Omega)\|\rho\mu+\rho\Psi^{\prime}(\phi)\|_{L^{6}(\Omega)}\\
			&\leqslant  C(\Omega)\epsilon_0\big(\|\mu\|_{L^{6}(\Omega)}+\|\phi^{3}\|_{L^{6}(\Omega)}+\|\phi\|_{L^{6}(\Omega)}\big)\\
			&\leqslant C(\Omega)\epsilon_0(\|\mu\|_{H^{1}(\Omega)}+\|\phi^{3}\|_{L^{2}(\Omega)}+\|\nabla\phi \phi^{2}\|_{L^{2}(\Omega)}+\|\phi\|_{L^{6}(\Omega)})\\
			&\leqslant  C\epsilon_0\|\nabla \mu\|_{L^{2}(\Omega)}+C(\Omega)\epsilon_0\big(\|\phi\|_{L^{2}(\Omega)}^{2}\|\phi\|_{W^{2,6}(\Omega)}+\|\nabla\phi\|_{L^{2}(\Omega)}\|\phi\|_{L^{2}(\Omega)}\|\phi\|_{W^{2,6}(\Omega)}+\|\phi\|_{W^{2,6}(\Omega)}\big)\\
			&\leqslant  C\epsilon_0\|\nabla \mu\|_{L^{2}(\Omega)}+C\epsilon_0\|\phi\|_{W^{2,6}(\Omega)},
		\end{split}
	\end{align*}
	Taking $\epsilon_0$ small enough such that $C\epsilon_0^{2}<\frac12,$ one gets \eqref{4.7} directly.
	
	Finally,  one applies Lemma \ref{lem2-4} on $\eqref{1.1}_2$ to arrive that
	\begin{align*}
		\begin{split}
			\|u\|_{H^{2}(\Omega)}
			&\leqslant C(\Omega)\|-\Delta\phi\nabla\phi-\nabla^{2}\phi\cdot\nabla\phi-\rho u_t -\rho(u\cdot\nabla)u\|_{L^{2}(\Omega)}\\
			&\leqslant C(\Omega)(\|\Delta\phi\|_{L^{6}(\Omega)}\|\nabla\phi\|_{L^{3}(\Omega)}
			+\|\nabla^{2}\phi\|_{L^{6}(\Omega)}\|\nabla\phi\|_{L^{3}(\Omega)}+\epsilon_0^{\frac{1}{2}}\|\sqrt{\rho} u_t \|_{L^{2}(\Omega)}+\epsilon_0\|\nabla u\|_{L^{3}(\Omega)}\|u\|_{L^{6}(\Omega)})\\
			&\leqslant C(\Omega)(\|\phi\|_{W^{2,6}(\Omega)}\|\nabla\phi\|_{L^{2}(\Omega)}^{\frac{3}{4}}\|\phi\|_{W^{2,6}(\Omega)}^{\frac{1}{4}}+\epsilon_{0}^{\frac{1}{2}}\|\sqrt{\rho}u_t\|_{L^{2}(\Omega)}+\epsilon_{0}\|\nabla u\|_{L^{2}(\Omega)}^{\frac{3}{2}}\|u\|_{H^{2}(\Omega)}^{\frac{1}{2}}) \\
			&\leqslant  C\epsilon_{0}^{\frac{3}{4}}\|\phi\|_{W^{2,6}(\Omega)}^{\frac{5}{4}}
			+C\epsilon_0^{\frac{1}{2}}\|\sqrt{\rho} u_t \|_{L^{2}(\Omega)}+C\epsilon_0\|\nabla u\|_{L^{2}(\Omega)}^{3}\|u\|_{H^{2}(\Omega)}^{\frac{1}{2}}\\
			&\leqslant \eta \|u\|_{H^{2}(\Omega)}+ C (\epsilon_0^{2}\|\nabla\mu\|_{L^{2}(\Omega)}^{\frac{5}{4}}
			+\epsilon_0^{\frac{1}{2}}\|\sqrt{\rho} u_t\|_{L^{2}(\Omega)}+\epsilon_0^{2}\|\nabla u\|_{L^{2}(\Omega)}^{3}),
		\end{split}
	\end{align*}
	holds for any fixed $\eta \in (0,1).$ Selecting suitable small $\eta \in (0,1),$ one obtains that
	\begin{equation*}\|u\|_{H^{2}(\Omega)}\leqslant
		C\big(\epsilon_0^{2}\|\nabla\mu\|_{L^{2}(\Omega)}^{\frac{5}{4}}+\epsilon_0^{\frac{1}{2}}\|\sqrt{\rho} u_t\|_{L^{2}(\Omega)}+\epsilon_0^{2}\|\nabla u\|_{L^{2}(\Omega)}^{3}\big).
	\end{equation*}
	So the desired estimate \eqref{4.5} is proved.
\end{proof}

The following Lemma \ref{lem4-4} is concerned with the estimates on the higher-order derivatives of the velocity, the difference of fluids concentrations and the chemical potential.

\begin{lemma}\label{lem4-4}
	Under the assumption of Theorem \ref{thm1.2}, let $(\rho,u,\phi,\mu)$ be a strong solution to the problem \eqref{1.1}-\eqref{1.4} on $(0,T].$
	Then there exists a positive constant $C,$ depending only on $|\Omega|$, $v_*$, $v^*$, $\|\phi_0\|_{L^2(\Omega)}$, $\|\mu_0\|_{L^2(\Omega)}$, such that
	\begin{align*}
\sup_{0\leqslant  t\leqslant T}\big(\|\nabla u\|^{2}_{L^{2}(\Omega)}+\|\nabla \mu\|^{2}_{L^{2}(\Omega)}+\|\phi\|^{2}_{W^{2,6}(\Omega)} \big)+ \int_{0}^{T}\big(\| \phi_t\|_{H^{1}(\Omega)}^{2}+\|  u_t \|_{L^{2}(\Omega)}^{2}+\|u\|_{H^{2}(\Omega)}^{2}\big) dt
			\leqslant C\epsilon_0,
	\end{align*}
	where $\epsilon_0$ is a small positive constant in \eqref{1.12}.
\end{lemma}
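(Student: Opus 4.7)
The plan is to adapt the energy estimates in the proof of Lemma \ref{lem3-4} by exploiting the sharper bounds of Lemma \ref{lem4-1} (namely $\|\rho\|_{L^\infty(\Omega)}\leqslant C\epsilon_0$) and Lemma \ref{lem4-2} (namely $\|\phi\|_{W^{2,6}(\Omega)}\leqslant C\epsilon_0\|\nabla\mu\|_{L^2(\Omega)}$, $\|\mu\|_{H^1(\Omega)}\leqslant C\|\nabla\mu\|_{L^2(\Omega)}$, and the refined estimate for $\|u\|_{H^2(\Omega)}$), so that every constant $C(\rho^*)$ appearing in Section \ref{S3} is systematically replaced by $C\epsilon_0^\alpha$ for some $\alpha>0$, thereby transferring smallness to the right-hand side of the differential inequality.

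First I would reproduce the same energy identity derived in (3.19) by testing $\eqref{1.1}_2$ against $u_t$, $\eqref{1.1}_4$ against $\mu_t$ and $2\phi_t$, and summing. This yields
\begin{align*}
&\frac{d}{dt}\Big\{\int_\Omega \tfrac{v(\phi)}{2}|\mathbb{D}u|^2 dx+\tfrac{1}{2}\|\nabla\mu\|_{L^2(\Omega)}^2+\int_\Omega \rho\mu u\cdot\nabla\phi dx\Big\}\\
&\quad+\int_\Omega |\nabla\phi_t|^2 dx+\int_\Omega \rho|\phi_t|^2 dx+\int_\Omega \rho|u_t|^2 dx+3\int_\Omega \rho\phi^2|\phi_t|^2 dx=\sum_{i=1}^{12}I_i,
\end{align*}
with the $I_i$ exactly as in (3.19). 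Next, I would re-estimate each $I_i$ by the same Sobolev, H\"older and Young manipulations used in the proof of Lemma \ref{lem3-4}, but now employing the $\epsilon_0$-scaled bounds from Lemmas \ref{lem4-1}--\ref{lem4-2} at every occurrence of $\|\rho\|_{L^\infty(\Omega)}$, $\|\phi\|_{W^{2,6}(\Omega)}$, $\|\mu\|_{H^1(\Omega)}$ and $\|u\|_{H^2(\Omega)}$. Each $I_i$ thereby acquires at least one explicit factor $\epsilon_0^\alpha$ with $\alpha>0$, and the nonlinear $u_t$-terms get absorbed into the dissipation using $\|\sqrt\rho u_t\|_{L^2(\Omega)}^2 \leqslant C\epsilon_0\|u_t\|_{L^2(\Omega)}^2$ combined with the refined $\|u\|_{H^2(\Omega)}$ estimate \eqref{4.8}.

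Setting $F(t):=\|\nabla u\|_{L^2(\Omega)}^2+\|\nabla\mu\|_{L^2(\Omega)}^2$ and $D(t):=\|\nabla\phi_t\|_{L^2(\Omega)}^2+\|\sqrt\rho\phi_t\|_{L^2(\Omega)}^2+\|\sqrt\rho u_t\|_{L^2(\Omega)}^2$, one should arrive at an inequality of the schematic form
\begin{equation*}
\frac{d}{dt}F(t)+D(t)\leqslant C\epsilon_0^{\alpha}\bigl(1+F(t)\bigr)\bigl(1+\|u\|_{L^6(\Omega)}^{4}+F(t)\bigr),
\end{equation*}
and equivalence of $F(t)$ with the modulated energy (as in (3.22)) follows from the smallness of $\rho$ in place of the argument with $\rho^*$.

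The main obstacle is closing this estimate into a uniform $O(\epsilon_0)$ bound over the whole interval $[0,T]$, since $T$ is arbitrary and not small. I would set up a continuity/bootstrap argument: with $\tilde C$ chosen so that $F(0)\leqslant \tilde C\epsilon_0$ by \eqref{1.12}, define $T_\star:=\sup\{t\in[0,T]:F(s)\leqslant 2\tilde C\epsilon_0 \mbox{ for all } s\in[0,t]\}$. On $[0,T_\star]$, the assumption $F\leqslant 2\tilde C\epsilon_0$ reduces the right-hand side to a small multiple of $\epsilon_0^{1+\alpha'}\,g(t)$ with $g$ integrable in time via Lemma \ref{lem4-1}; Gronwall (Lemma \ref{lem2-5}) then yields $F(t)\leqslant \tilde C\epsilon_0$ provided $\epsilon_0$ is sufficiently small in terms of $|\Omega|,v_*,v^*,\|\phi_0\|_{L^2(\Omega)},\|\mu_0\|_{L^2(\Omega)}$. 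By continuity of $F$ this forces $T_\star=T$. Integrating the differential inequality in time then gives $\int_0^T D(t) dt\leqslant C\epsilon_0$, which in turn controls $\int_0^T(\|\phi_t\|_{H^1(\Omega)}^2+\|u_t\|_{L^2(\Omega)}^2) dt$ through Lemma \ref{lem2-2}, and combining with \eqref{4.7}--\eqref{4.8} from Lemma \ref{lem4-2} yields the desired bounds on $\sup_t\|\phi\|_{W^{2,6}(\Omega)}^2$ and $\int_0^T\|u\|_{H^2(\Omega)}^2 dt$, completing the proof.
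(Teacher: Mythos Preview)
Your overall strategy---reproducing the energy identity \eqref{3.19}, re-estimating each $I_i$ with the $\epsilon_0$-weighted bounds of Lemmas \ref{lem4-1}--\ref{lem4-2}, and closing by a continuity argument---is exactly what the paper does. The paper's bootstrap is phrased slightly differently (it bootstraps on $\int_0^T\|\nabla u\|_{L^2(\Omega)}^4\,dt\leqslant 2M$ rather than on $\sup_t F(t)$), but your version is equally viable once the differential inequality has the right structure.

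The one genuine gap is in your schematic inequality. You assert that every $I_i$ carries an $\epsilon_0^\alpha$ factor; this is false for $I_{12}=-2\int_\Omega\nabla\mu\cdot\nabla\phi_t\,dx$, which yields $C\|\nabla\mu\|_{L^2(\Omega)}^2$ with no $\epsilon_0$. More importantly, your form $C\epsilon_0^\alpha(1+F)(1+\|u\|_{L^6(\Omega)}^4+F)$ contains a constant-in-time contribution $C\epsilon_0^\alpha\cdot 1\cdot 1$, and under the bootstrap hypothesis $F\leqslant 2\tilde C\epsilon_0$ the whole right-hand side degenerates to $O(\epsilon_0^\alpha)$; integrating then gives $F(t)\leqslant F(0)+C\epsilon_0^\alpha T$, which cannot be kept $\leqslant\tilde C\epsilon_0$ for arbitrary $T$. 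The paper's careful bookkeeping produces instead (cf.\ \eqref{4.12}--\eqref{4.14})
\[
\frac{d}{dt}\bigl(\epsilon_0+F\bigr)+D\leqslant C\bigl(\epsilon_0+F\bigr)\bigl(\epsilon_0\|\nabla u\|_{L^2(\Omega)}^4+\|\nabla\mu\|_{L^2(\Omega)}^2+\|\nabla u\|_{L^2(\Omega)}^2\bigr)+C\|\nabla\mu\|_{L^2(\Omega)}^2,
\]
where every term in the second factor and the inhomogeneity is time-integrable uniformly in $T$ by Lemma \ref{lem4-1} (and, for $\epsilon_0\|\nabla u\|^4$, by the bootstrap hypothesis combined with $\int_0^T\|\nabla u\|_{L^2(\Omega)}^2\,dt\leqslant C\epsilon_0$). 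With this corrected structure, Gronwall gives $F(t)\leqslant C\epsilon_0\exp(C\epsilon_0)$ and your continuity argument on $\sup_t F$ closes for $\epsilon_0$ small. So your plan is right, but you must drop the ``$1+$'' summands and account separately for the $I_{12}$ contribution.
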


\begin{proof}
	Recalling the equation \eqref{3.19}, we write it down again here.
	\begin{align}\label{4.11}
		&\frac{d}{dt}\left\{\int_{\Omega}\frac{v(\phi)}{2}|\mathbb{D}u |^{2} dx+\frac{1}{2}\|\nabla\mu\|_{L^{2}(\Omega)}^{2}
		+\int_{\Omega}\rho\mu u\cdot\nabla\phi dx\right\}\nonumber\\
		&\quad+\int_{\Omega}|\nabla\phi_t  |^{2} dx+\int_{\Omega}\rho| \phi_t  |^{2} dx+\int_{\Omega}\rho| u_t  |^{2} dx
		+3\int_{\Omega}\rho\phi^{2}|\phi_t \rvert^{2} dx\nonumber\\
		&=\underbrace{-\int_{\Omega}\rho(u\cdot\nabla)u \cdot u_t   dx}_{I_1}+\underbrace{2\int_{\Omega}\rho\mu\nabla\phi\cdot u_t  dx}_{I_2}
		+\underbrace{\int_{\Omega}-\rho\nabla\Psi(\phi)\cdot u_t dx}_{I_3}\nonumber\\
		&\quad+\underbrace{\int_{\Omega}v^{\prime}(\phi) \phi_t |\mathbb{D}u |^{2}dx}_{I_4}
		+\underbrace{\int_{\Omega}-\rho\Psi^{\prime\prime} (\phi) \phi_t  u\cdot\nabla\phi  dx}_{I_5}
		+\underbrace{\int_{\Omega}-\rho\Psi^{\prime}(\phi)u\cdot\nabla\phi_t dx}_{I_6}\nonumber\\
		&\quad+\underbrace{\int_{\Omega}\rho \phi_t  u\cdot\nabla\mu  dx}_{I_7}
		+\underbrace{\int_{\Omega}\rho(u\cdot\nabla\mu)(u\cdot\nabla\phi) dx}_{I_8}+\underbrace{\int_{\Omega}\rho\mu u\cdot\nabla(u\cdot\nabla\phi) dx}_{I_9}\nonumber\\
		&\quad+\underbrace{2\int_{\Omega}\rho\mu u\cdot\nabla\phi_t   dx}_{I_{10}}
		+\underbrace{2\int_{\Omega}-\rho u\cdot\nabla\phi \phi_t   dx}_{I_{11}}+\underbrace{2\int_{\Omega}-\nabla\mu\cdot\nabla\phi_t dx}_{I_{12}}.
	\end{align}
	Now, we exploit Lemma \ref{lem4-2}, Sobolev inequality, H$\ddot{\text{o}}$lder inequality and Young's inequality to estimate each term on the right hand of \eqref{4.11} as follows:
	\begin{align*}
		|I_1|&=|\int_{\Omega}\rho (u\cdot \nabla)u\cdot u_t  dx |\\
		&\leqslant  C\epsilon_{0}^{\frac{1}{2}}\|\sqrt{\rho} u_t \|_{L^{2}(\Omega)}\|u\|_{L^{6}(\Omega)}\|\nabla u\|_{L^{3}(\Omega)}\\
		&\leqslant C\epsilon_{0}^{\frac{1}{2}}\|\sqrt{\rho}u_t\|_{L^{2}(\Omega)}\|\nabla u\|_{L^{2}(\Omega)}^{\frac{3}{2}}\|u\|_{H^{2}(\Omega)}^{\frac{1}{2}} \\
		&\leqslant\eta\|\sqrt{\rho} u_t \|_{L^{2}(\Omega)}^{2}+C\epsilon_0\|\nabla u\|_{L^{2}(\Omega)}^{3}\| u\|_{H^{2}(\Omega)}\\
		&\leqslant\eta \|\sqrt{\rho} u_t\|_{L^{2}(\Omega)}^{2}+C\epsilon_0\|\nabla u\|_{L^{2}(\Omega)}^{3}(\epsilon_{0}^{2}\|\nabla\mu\|_{L^{2}(\Omega)}^{\frac{5}{4}}
		+\epsilon_{0}^{\frac{1}{2}}\|\sqrt{\rho}u_t\|_{L^{2}(\Omega)}+\epsilon_{0}^{2}\|\nabla u\|_{L^{2}(\Omega)}^{3}) \\
		&\leqslant\eta\|\sqrt{\rho} u_t \|_{L^{2}(\Omega)}^{2}+C\epsilon_0^{3}\|\nabla u\|_{L^{2}(\Omega)}^{6}+C\epsilon_0^{3}\|\nabla \mu\|_{L^{2}(\Omega)}^{\frac{5}{2}},\\
		|I_2|&=|2\int_{\Omega}\rho\mu\nabla\phi\cdot u_t dx| \\
		&\leqslant  C\epsilon_{0}^{\frac{1}{2}}\|\sqrt{\rho} u_t \|_{L^{2}(\Omega)}\|\mu\|_{L^{6}(\Omega)}\|\nabla\phi\|_{L^{3}(\Omega)}\\
		&\leqslant C\epsilon_{0}^{\frac{1}{2}}\|\sqrt{\rho}u_t\|_{L^{2}(\Omega)}\|\mu\|_{H^{1}(\Omega)}
		\|\nabla\phi\|_{L^{2}(\Omega)}^{\frac{3}{4}}\|\phi\|_{W^{2,6}(\Omega)}^{\frac{1}{4}} \\
		&\leqslant  C\epsilon_{0}^{\frac{3}{2}}\|\sqrt{\rho} u_t \|_{L^{2}(\Omega)}\|\nabla\mu\|_{L^{2}(\Omega)}^{\frac{5}{4}}\\
		&\leqslant\eta\|\sqrt{\rho} u_t \|_{L^{2}(\Omega)}^{2}+C\epsilon_0^{3}\|\nabla\mu\|_{L^{2}(\Omega)}^{\frac{5}{2}},\\
		|I_3|&=|\int_{\Omega}\rho\nabla\Psi(\phi)\cdot u_t dx | \\
		&\leqslant  C\epsilon_0^{\frac{1}{2}}\|\Psi^{\prime}(\phi)\|_{L^{\infty}(\Omega)}\|\nabla\phi\|_{L^{2}(\Omega)}\|\sqrt{\rho} u_t \|_{L^{2}(\Omega)}\\
		&\leqslant C\epsilon_{0}\|\sqrt{\rho}u_t\|_{L^{2}(\Omega)}(\|\phi\|_{L^{\infty}(\Omega)}^{3}+\|\phi\|_{L^{\infty}(\Omega)})\|\phi\|_{W^{2,6}(\Omega)}^{\frac{1}{2}}\\
		&\leqslant C\epsilon_0\|\sqrt{\rho} u_t\|_{L^{2}(\Omega)}\|\phi\|_{L^{2}(\Omega)}^{\frac{3}{2}}\|\phi\|_{W^{2,6}(\Omega)}^{2}
		+C\epsilon_{0}^{\frac{3}{2}}\|\sqrt{\rho}u_t\|_{L^{2}(\Omega)}\|\phi\|_{W^{2,6}(\Omega)} \\
		&\leqslant\eta\|\sqrt{\rho}u_t\|_{L^{2}(\Omega)}^{2}+C\epsilon_0^{5}\|\phi\|_{W^{2,6}(\Omega)}^{4}+C\epsilon_0^{3}\|\phi\|_{W^{2,6}(\Omega)}^{2}\\
		&\leqslant\eta\|\sqrt{\rho}u_t\|_{L^{2}(\Omega)}^{2}+C\epsilon_0^{5}(1+\epsilon_0^{4}\|\nabla\mu\|_{L^{2}(\Omega)}^{2})\|\nabla\mu\|_{L^{2}(\Omega)}^{2},\\
		|I_4 |&=|\int_{\Omega}v^{\prime}(\phi)\phi_t|\mathbb{D}u |^{2} dx |   \\
		&\leqslant  C\| \phi_t \|_{L^{6}(\Omega)}\|\nabla u\|_{L^{3}(\Omega)}\|\nabla u\|_{L^{2}(\Omega)}\\
		&\leqslant C(\|\nabla\phi_t\|_{L^{2}(\Omega)}+\epsilon_{0}^{\frac{1}{2}}\|\sqrt{\rho}\phi_t\|_{L^{2}(\Omega)})
		\|\nabla u\|_{L^{2}(\Omega)}^{\frac{3}{2}}\|u\|_{H^{2}(\Omega)}^{\frac{1}{2}} \\
		&\leqslant\eta(\|\nabla \phi_t\|_{L^{2}(\Omega)}^{2}+\|\sqrt{\rho}\phi_t\|_{L^{2}(\Omega)}^{2})+C\|\nabla u\|_{L^{2}(\Omega)}^{3}\|u\|_{H^{2}(\Omega)}\\
		&\leqslant\eta(\|\nabla \phi_t\|_{L^{2}(\Omega)}^{2}+\|\sqrt{\rho}\phi_t\|_{L^{2}(\Omega)}^{2}+\|\sqrt{\rho}u_t\|_{L^{2}(\Omega)}^{2})
		+C\epsilon_0\|\nabla u\|_{L^{2}(\Omega)}^{6}+C\epsilon_0^{3}\|\nabla\mu\|_{L^{2}(\Omega)}^{\frac{5}{2}},\\
		|I_5 |&=|\int_{\Omega}\rho\Psi^{\prime\prime}(\phi)\phi_t u\cdot \nabla\phi dx |  \\
		&\leqslant  C\epsilon_0\|\nabla\phi\|_{L^{2}(\Omega)}\|u\|_{L^{6}(\Omega)}\| \phi_t \|_{L^{6}(\Omega)}\|\Psi^{\prime\prime} (\phi)\|_{L^{6}}\\
		&\leqslant C\epsilon_{0}^{2}(\|\nabla\phi_t\|_{L^{2}(\Omega)}
		+\epsilon_{0}^{\frac{1}{2}}\|\sqrt{\rho}\phi_t\|_{L^{2}(\Omega)})\|\nabla u\|_{L^{2}(\Omega)}(1+\|\phi\|_{L^{12}(\Omega)}^{2}) \\
		&\leqslant  C\epsilon_0^{2}(\|\nabla\phi_t\|_{L^{2}(\Omega)}
		+\epsilon_0^{\frac{1}{2}}\|\sqrt{\rho}\phi_t\|_{L^{2}(\Omega)})\|\nabla u\|_{L^{2}(\Omega)}
		(1+\|\phi\|_{L^{2}(\Omega)}^{\frac{7}{6}}\|\phi\|_{W^{2,6}(\Omega)}^{\frac{5}{6}})\\
		&\leqslant\eta(\|\nabla\phi_t\|_{L^{2}(\Omega)}^{2}+\|\sqrt{\rho}\phi_t\|_{L^{2}(\Omega)}^{2})
		+C\epsilon_0^{4}\|\nabla u\|_{L^{2}(\Omega)}^{2}(1+\epsilon_0^{4}\|\nabla \mu\|_{L^{2}(\Omega)}^{\frac{5}{3}}),\\
		|I_6 |& = |\int_{\Omega}\rho\Psi^{\prime}(\phi)u\cdot\nabla\phi_t dx|\\
		&\leqslant  C\epsilon_0\|\Psi^{\prime}(\phi)\|_{L^{3}(\Omega)}\|u\|_{L^{6}(\Omega)}\|\nabla\phi_t \|_{L^{2}(\Omega)}\\
		&\leqslant \eta\|\nabla\phi_t\|_{L^{2}(\Omega)}^{2}+C\epsilon_{0}^{2}\|\nabla u\|_{L^{2}(\Omega)}^{2}\|\phi^{3}-\phi\|_{L^{3}(\Omega)}^{2} \\
		&\leqslant\eta\|\nabla\phi_t\|_{L^{2}(\Omega)}^{2}+C\epsilon_0^{2}\|\nabla u\|_{L^{2}(\Omega)}^{2}(\|\phi\|_{L^{9}(\Omega)}^{3}+\|\phi\|_{L^{3}(\Omega)})^{2}\\
		&\leqslant\eta\|\nabla\phi_t\|_{L^{2}(\Omega)}^{2}
		+C\epsilon_0^{2}\|\nabla u\|_{L^{2}(\Omega)}^{2}(\|\phi\|_{L^{6}(\Omega)}^{\frac{11}{2}}\|\phi\|_{W^{2,6}(\Omega)}^{\frac{1}{2}}+\|\phi\|_{H^{1}(\Omega)}^{2})\\
		&\leqslant\eta\|\nabla\phi_t\|_{L^{2}(\Omega)}^{2}+C\epsilon_0^{4}\|\nabla u\|_{L^{2}(\Omega)}^{2}(1+\|\nabla\mu\|_{L^{2}(\Omega)}^{2}),\\
		|I_7 |&= |\int_{\Omega}\rho\phi_t u\cdot \nabla\mu dx|\\
		&\leqslant  C\epsilon_0\| \phi_t \|_{L^{6}(\Omega)}\|u\|_{L^{3}(\Omega)}\|\nabla\mu\|_{L^{2}(\Omega)}\\
		&\leqslant C\epsilon_0(\|\nabla \phi_t\|_{L^{2}(\Omega)}
		+\epsilon_0^{\frac{1}{2}}\|\sqrt{\rho}\phi_t\|_{L^{2}(\Omega)})\|\nabla u\|_{L^{2}(\Omega)}\|\nabla \mu\|_{L^{2}(\Omega)}\\
		&\leqslant\eta(\|\nabla \phi_t\|^{2}_{L^{2}(\Omega)}+\|\sqrt{\rho}\phi_t\|_{L^{2}(\Omega)}^{2})
		+C\epsilon_0^{2}\|\nabla u\|_{L^{2}(\Omega)}^{2}\|\nabla \mu\|_{L^{2}(\Omega)}^{2},\\
		|I_8 |& = |\int_{\Omega}\rho(u\cdot\nabla\mu)(u\cdot\nabla\phi) dx|\\
		&\leqslant  C\epsilon_0\|u\|_{L^{6}(\Omega)}^{2}\|\nabla\mu\|_{L^{2}(\Omega)}\|\nabla\phi\|_{L^{6}(\Omega)}\\
		&\leqslant  C\epsilon_0\|\nabla u\|_{L^{2}(\Omega)}^{2}\|\nabla \mu\|_{L^{2}(\Omega)}
		\|\nabla \phi\|_{L^{2}(\Omega)}^{\frac{1}{2}}\|\phi\|_{W^{2,6}(\Omega)}^{\frac{1}{2}}\\
		&\leqslant  C\epsilon_0^{2}\|\nabla  u\|_{L^{2}(\Omega)}^{2}(1+\|\nabla \mu\|_{L^{2}(\Omega)}^{2}),\\
		|I_9 |&= |\int_{\Omega}\rho\mu u\cdot\nabla(u\cdot\nabla\phi) dx|\\
		&\leqslant  C\epsilon_0\|\mu\|_{L^{6}(\Omega)}\| u\|_{L^{6}(\Omega)}(\|\nabla u\|_{L^{2}(\Omega)}\|\nabla \phi\|_{L^{6}(\Omega)}
		+\|u\|_{L^{2}(\Omega)}\|\nabla^{2}\phi\|_{L^{6}(\Omega)})\\
		&\leqslant  C\epsilon_0\|\mu\|_{H^{1}(\Omega)}\|\nabla u\|_{L^{2}(\Omega)}^{2}
		(\|\nabla \phi\|_{L^{2}(\Omega)}^{\frac{1}{2}}\|\phi\|_{W^{2,6}(\Omega)}^{\frac{1}{2}}+\|\phi\|_{W^{2,6}(\Omega)})\\
		&\leqslant C\epsilon_0^{2}\|\nabla u\|_{L^{2}(\Omega)}^{2}(1+\|\nabla \mu\|_{L^{2}(\Omega)}^{2}),\\
		|I_{10}|&= |2\int_{\Omega}\rho\mu u\cdot\nabla\phi_t dx|\\
		&\leqslant  C\epsilon_0\|\mu\|_{L^{6}(\Omega)}\|u\|_{L^{3}(\Omega)}\|\nabla\phi_t \|_{L^{2}(\Omega)}\\
		&\leqslant  C\epsilon_0\|\nabla \phi_t\|_{L^{2}(\Omega)}\|\nabla u\|_{L^{2}(\Omega)}\| \mu\|_{H^{1}(\Omega)}\\
		&\leqslant \eta\|\nabla \phi_t\|_{L^{2}(\Omega)}^{2}+C\epsilon_0^{2}\|\nabla  u\|_{L^{2}(\Omega)}^{2}\|\nabla \mu\|_{L^{2}(\Omega)}^{2},\\
		|I_{11}|&= |2\int_{\Omega}\rho u\cdot\nabla\phi\phi_t dx|\\
		&\leqslant   C\epsilon_0\| \phi_t \|_{L^{6}(\Omega)}\|\nabla\phi\|_{L^{3}(\Omega)}\|u\|_{L^{2}(\Omega)}\\
		&\leqslant C\epsilon_0(\|\nabla\phi_t\|_{L^{2}(\Omega)}+\epsilon_0^{\frac{1}{2}}\|\sqrt{\rho}\phi_t\|_{L^{2}(\Omega)})
		\|\nabla\phi\|_{L^{2}(\Omega)}^{\frac{3}{4}}\|\phi\|_{W^{2,6}(\Omega)}^{\frac{1}{4}}\|\nabla u\|_{L^{2}(\Omega)}\\
		&\leqslant\eta(\|\nabla \phi_t\|_{L^{2}(\Omega)}^{2}+\|\sqrt{\rho}\phi_t\|_{L^{2}(\Omega)}^{2})
		+C\epsilon_0^{4}\|\nabla u\|_{L^{2}(\Omega)}^{2}(1+\|\nabla\mu\|_{L^{2}(\Omega)}^{2}),\\
		|I_{12} |&= |2\int_{\Omega}\nabla\mu\cdot\nabla\phi_t dx|\\
		&\leqslant  C\|\nabla\mu\|_{L^{2}(\Omega)}\|\nabla\phi_t \|_{L^{2}(\Omega)}\\
		&\leqslant  \eta\|\nabla\phi_t \|_{L^{2}(\Omega)}^{2}+C\|\nabla\mu\|_{L^{2}(\Omega)}^{2}
	\end{align*}
	for any $\eta\in (0,1).$ Collecting all the above estimate and taking suitable small $\eta\in (0,1)$, one obtains that
	\begin{align}\label{4.12}
		\begin{split}
			&\frac{d}{dt}\left\{\int_{\Omega}\frac{v(\phi)}{2}|\mathbb{D}u |^{2} dx+\frac{1}{2}\|\nabla\mu\|_{L^{2}(\Omega)}^{2}+\int_{\Omega}\rho\mu u\cdot\nabla\phi  dx\right\}\\
			&\quad+\int_{\Omega}|\nabla\phi_t  |^{2} dx+\int_{\Omega}\rho| \phi_t  |^{2} dx+\int_{\Omega}\rho|u_t  |^{2}dx\\
			&\leqslant C (\epsilon_0+\|\nabla u\|_{L^{2}(\Omega)}^{2}+\|\nabla \mu\|_{L^{2}(\Omega)}^{2})(\epsilon_0\|\nabla u\|_{L^{2}(\Omega)}^{4}
			+\|\nabla\mu\|_{L^{2}(\Omega)}^{2}+\|\nabla u\|_{L^{2}(\Omega)}^{2})+C\|\nabla \mu\|_{L^{2}(\Omega)}^{2}.
		\end{split}
	\end{align}
	Thanks to Lemma \ref{lem4-2} and Gagliardo-Nirenberg inequality, one observes that
	\begin{eqnarray*}
		&|\int_{\Omega}\rho\mu u\cdot\nabla\phi dx \rvert&\leqslant  C\epsilon_0^{\frac{1}{2}}\|\nabla\phi\|_{L^{3}(\Omega)}\|\sqrt{\rho}u\|_{L^{2}(\Omega)}\|\mu\|_{L^{6}(\Omega)}\\
		&&\leqslant  C\epsilon_0\|\mu\|_{H^{1}(\Omega)}\|\nabla\phi\|_{L^{2}(\Omega)}^{\frac{3}{4}}\|\phi\|_{W^{2,6}(\Omega)}^{\frac{1}{4}}\\
		&&\leqslant C\epsilon_0^{2}\|\nabla\mu\|_{L^{2}(\Omega)}^{\frac{5}{4}}.
	\end{eqnarray*}
	In turn, this implies that
	\begin{align}\label{4.13}
		\begin{split}
			&\int_{\Omega}\frac{v(\phi)}{2}|\mathbb{D}u |^{2} dx+\frac{1}{2}\|\nabla\mu\|_{L^{2}(\Omega)}^{2}+\int_{\Omega}\rho\mu u\cdot\nabla\phi dx\\
			&\geqslant\int_{\Omega}\frac{v(\phi)}{2}|\mathbb{D}u |^{2} dx
			+\frac{1}{2}\|\nabla\mu\|_{L^{2}(\Omega)}^{2}-C\epsilon_0^{2}\|\nabla\mu\|_{L^{2}(\Omega)}^{\frac{5}{4}}\\
			&\geqslant \int_{\Omega}\frac{v(\phi)}{2}|\mathbb{D}u |^{2} dx+(\frac{1}{2}-C\epsilon_0^{2})\|\nabla\mu\|_{L^{2}(\Omega)}^{2}-C\epsilon_0^{2}\\
			&\geqslant \frac{\sqrt{2}v_*}{4}\|\nabla u\|_{L^{2}(\Omega)}^{2}+  (\frac{1}{2}-C\epsilon_0^{2})\|\nabla\mu\|_{L^{2}(\Omega)}^{2}-C\epsilon_0^{2} .
		\end{split}
	\end{align}
	Combining \eqref{4.12} with \eqref{4.13} and taking $\epsilon_0$ small enough such that
	$$C\epsilon_0^{2}<\frac{1}{2},$$
	we eventually obtain that
	\begin{align}\label{4.14}
		\begin{split}
			&\frac{d}{dt}\left\{\epsilon_0+\|\nabla u\|_{L^{2}(\Omega)}^{2}+\|\nabla \mu\|_{L^{2}(\Omega)}^{2}\right\}+\int_{\Omega}\left(|\nabla\phi_t  |^{2}+\rho| \phi_t  |^{2}+\rho|u_t  |^{2}\right)dx\\
			&\leqslant C (\epsilon_0+\|\nabla u\|_{L^{2}(\Omega)}^{2}+\|\nabla \mu\|_{L^{2}(\Omega)}^{2})
			(\epsilon_0\|\nabla u\|_{L^{2}(\Omega)}^{4}+\|\nabla\mu\|_{L^{2}(\Omega)}^{2}+\|\nabla u\|_{L^{2}(\Omega)}^{2})+C\|\nabla \mu\|_{L^{2}(\Omega)}^{2}.
		\end{split}
	\end{align}
	According to Lemma \ref{lem2-5}, one finds that
	\begin{align}\label{4.15}
		\begin{split}
			&\sup_{t\in[0,T]}(\|\nabla u\|_{L^{2}(\Omega)}^{2}+\|\nabla\mu\|_{L^{2}(\Omega)}^{2})\\
			&\leqslant C exp\left({\int_{0}^{T}C (\epsilon_0\|\nabla u\|_{L^{2}(\Omega)}^{4}+\|\nabla \mu\|_{L^{2}(\Omega)}^{2}+\|\nabla u\|_{L^{2}(\Omega)}^{2}) dt}\right)\cdot\\
			&\quad\left(\|\nabla u_0\|_{L^{2}(\Omega)}^{2}+\|\nabla\mu_0\|_{L^{2}(\Omega)}^{2}+\epsilon_0+\int_{0}^{T}\|\nabla\mu\|_{L^{2}(\Omega)}^{2} dt\right)\\
			&\leqslant C\epsilon_0exp\left({\int_{0}^{T}C\epsilon_0\|\nabla u\|_{L^{2}(\Omega)}^{4} dt}\right).
		\end{split}
	\end{align}
	
	Moreover, the existence of local strong solutions to the problem \eqref{1.1}-\eqref{1.4} in Lemma \ref{Existence}, implies that there exists $T_{*}>0$ such that the problem \eqref{1.1}-\eqref{1.4} admits a unique local strong solution $(\rho,u,\phi,\mu)$ on $(0,T_{*}).$ So, there exists a positive constant $M>0$ such that
	\begin{equation}\label{4.16}
		\int_{0}^{T}\|\nabla u\|_{L^{2}(\Omega)}^{4} dt\leqslant 2M,
	\end{equation}
	holds for any $T\in (0,T^*).$ Using \eqref{4.2} and \eqref{4.15}, one obtains that
	\begin{eqnarray*}
		&\int_{0}^{T}\|\nabla u\|_{L^{2}(\Omega)}^{4} dt&\leqslant \sup_{t\in [0,T]}\|\nabla u\|^{2}_{L^{2}(\Omega)}\int_{0}^{T}\|\nabla u\|_{L^{2}(\Omega)}^{2} dt\\
		&&\leqslant C\epsilon_0^2 exp\left({\int_{0}^{T}C\epsilon_0\|\nabla u\|_{L^{2}(\Omega)}^{4} dt}\right)\\
		&&\leqslant C\epsilon_{0}^{2}exp\left({2C\epsilon_0M}\right).
	\end{eqnarray*}
	The fact
	$$\lim\limits_{x\rightarrow 0+}x^2exp\left({ax}\right)=0$$
	for any $a>0,$ implies that there exists small enough $\epsilon_0$ such that
	$$a\epsilon_{0}^{2}exp\left({2a\epsilon_0M}\right)<M.$$ So, we have
	\begin{equation}\label{4.17}
		\int_{0}^{T}\|\nabla u\|_{L^{2}(\Omega)}^{4} dt\leqslant M.
	\end{equation}
	Thus, it is deduced from \eqref{4.16} and \eqref{4.17} that
	$$\sup_{t\in [0,T]}(\|\nabla u\|_{L^{2}(\Omega)}^{2}+\|\nabla \mu\|_{L^{2}(\Omega)}^{2})\leqslant C\epsilon_0.$$
	Therefore, with the help of Lemma \ref{lem4-1} and Lemma \ref{lem4-2}, one gets that
	\begin{align}\label{4.18}
		\begin{split}
			&\sup_{t\in [0,T]}\big(\|\nabla u\|_{L^{2}(\Omega)}^{2}+\|\nabla \mu\|_{L^{2}(\Omega)}^{2}+\|\phi\|_{W^{2,6}(\Omega)}^{2} \big)+ \int_{0}^{T}(\| \phi_t\|_{H^{1}(\Omega)}^{2}+\| u_t \|_{L^{2}(\Omega)}^{2}+\|u\|_{H^{2}(\Omega)}^{2}) dt\\
			&\leqslant C\epsilon_0.
		\end{split}
	\end{align}
	This completes the proof of Lemma \ref{lem4-4}.
\end{proof}
With all the a priori estimates in Lemma \ref{lem4-1}-Lemma \ref{lem4-4}, we are now in a position to prove Theorem \ref{thm1.2}.

\noindent{\bf Proof of Theorem \ref{thm1.2}.} According to Lemma \ref{Existence}, there exists a $T_*>0$ such that the problem \eqref{1.1}-\eqref{1.4} has a unique local strong solution $(\rho,u,\phi,\mu)$ on $\Omega\times(0,T^*).$ It follows from
\begin{equation}\label{4.19}
	T^{*} \triangleq \sup\{T \ |\ (\rho,u,\phi,\mu) \ \text{is a solution on} \ \Omega\times(0,T]\ \text{and} \ \eqref{4.18} \ \text{holds} \}.
\end{equation}
Hence, it follows from \eqref{4.18} and Lemma \ref{Existence} that \eqref{existence} holds for any $T\in (0,T^{*})$ with $T$ finite.
Now, we claim that
\begin{equation}\label{4.20}
	T^{*} = \infty.
\end{equation}

\noindent Otherwise $T^{*}<\infty$. Then Lemma \ref{lem4-1} and Lemma \ref{lem4-4} hold at $T=T^{*}$. So,
$$(\rho^{T},u^{T},\phi^{T},\mu^{T})\triangleq (\rho,u,\phi,\mu)(x,T^{*})=\lim_{t\to T^{*}}(\rho,u,\phi,\mu)(x,t)$$
satisfies
\begin{eqnarray*}
	\begin{cases}
		0<\frac{1}{C}\leqslant \rho^{T}\leqslant C\epsilon_0,\ u^{T}\in V_{\sigma}(\Omega),\phi^{T}\in H^{2}(\Omega), \partial_n\phi^T = 0,\\
		\mu^{T}=-\frac{\Delta\phi^T}{\rho^T}+\Psi^{\prime}(\phi^T)\in H^{1}(\Omega).
	\end{cases}
\end{eqnarray*}
Therefore, one takes $ (\rho^{T},u^{T},\phi^{T},\mu^{T})$ as the initial data and apply Lemma \ref{Existence} to extend the local strong solution beyond $T^{*}$. This contradicts the assumption of $T^{*}$ in \eqref{4.19}. Hence, \eqref{4.20} holds, which means that the global existence of strong solution to the problem \eqref{1.1}-\eqref{1.4} has been proven.

Moreover, one gets that
\begin{align}\label{4.21}
		\frac{d}{dt}\int_{\Omega}\big(\frac{\rho u^{2}}{2}+\frac{\rho(\phi^{2}-1)^{2}}{4}+\frac{|\nabla \phi|^{2}}{2}\big) dx+\int_{\Omega}\big(v(\phi)|\mathbb{D}u |^{2}+|\nabla\mu |^{2}\big) dx=0.
\end{align}
Using Lemma \ref{lem4-2}, one finds that
\begin{eqnarray}\label{4.22}
	\int_{\Omega}\frac{1}{2}\rho|u |^{2} dx\leqslant\frac{\epsilon_0}{2}\|u\|_{L^{2}(\Omega)}^{2}\leqslant\frac{\sqrt{2}\epsilon_0}{2v_*}\int_{\Omega}v(\phi)|\mathbb{D}u |^{2} dx.
\end{eqnarray}
Taking into account of \eqref{1.1}$_5$ and Lemma \ref{lem4-2}, one gets that
\begin{eqnarray*}
	&&\int_{\Omega}\rho (\phi^{2}-1)^{2} dx+\int_{\Omega}|\nabla\phi |^{2} dx
	=\int_{\Omega}\rho\mu\phi dx-\int_{\Omega}\rho(\phi^{2}-1) dx\\
	&&\leqslant\epsilon_0\|\mu\|_{L^{2}(\Omega)}\|\phi\|_{L^{2}(\Omega)}
	+\frac{1}{2}\int_{\Omega}\rho(\phi^{2}-1)^{2} dx+\frac{1}{2}\int_{\Omega}\rho  dx\\
	&&\leqslant C\epsilon_0^2\|\nabla \mu\|_{L^{2}(\Omega)} +\frac{1}{2}\int_{\Omega}\rho(\phi^{2}-1)^{2} dx+\frac{1}{2}\int_{\Omega}\rho  dx,
\end{eqnarray*}
which implies that
\begin{align}\label{4.23}
	\int_{\Omega}\frac{1}{4}\rho (\phi^{2}-1)^{2} dx+\int_{\Omega}\frac{1}{2}|\nabla\phi |^{2} dx
	\leqslant C\epsilon_0^2\|\nabla \mu\|_{L^{2}(\Omega)}+\frac{1}{4}\int_{\Omega}\rho  dx.
\end{align}
Adding \eqref{4.22} and \eqref{4.23}, one obtains that
\begin{eqnarray}\label{4.24}
	&&\int_{\Omega}\left(\frac{1}{2}\rho|u |^{2} dx+\frac{1}{4}\rho (\phi^{2}-1)^{2} dx+\frac{1}{2}|\nabla\phi |^{2}\right) dx\nonumber\\
	&&\leqslant \frac{\sqrt{2}\epsilon_0}{2v_*}\int_{\Omega}v(\phi)|\mathbb{D}u |^{2} dx+ C\epsilon_0^2\|\nabla \mu\|_{L^{2}(\Omega)}+\frac{1}{4}\int_{\Omega}\rho  dx\\
	&& \leqslant \max\{\frac{\sqrt{2}}{2v_*}, C\epsilon_0\}\epsilon_0\big(\int_{\Omega}v(\phi)|\mathbb{D}u |^{2} dx+\|\nabla \mu\|_{L^{2}(\Omega)}\big)
	+\frac{1}{4}\int_{\Omega}\rho  dx.\nonumber
\end{eqnarray}
Combining \eqref{4.21} and \eqref{4.24}, it is easy to get that
\begin{align}
	\begin{split}
		&\frac{d}{dt}\int_{\Omega}\big(\frac{\rho u^{2}}{2}+\frac{\rho(\phi^{2}-1)^{2}}{4}+\frac{|\nabla \phi|^{2}}{2}\big) dx+a_0\int_{\Omega}\big(\frac{1}{2}\rho|u |^{2} dx+\frac{1}{4}\rho (\phi^{2}-1)^{2} dx+\frac{1}{2}|\nabla\phi |^{2}\big) dx\nonumber\\
		&\leqslant\frac{1}{4}a_0\int_{\Omega}\rho  dx
		=\frac{1}{4}a_0\int_{\Omega}\rho_0  dx,
	\end{split}
\end{align}
where $a_0=\left(\max\{\frac{\sqrt{2}}{2v_*}, C\epsilon_0\}\epsilon_0\right)^{-1}.$  Hence, it follows from \eqref{4-0} that
\begin{align*}
	\begin{split}
		&\int_{\Omega}\big(\frac{\rho u^{2}}{2}+\frac{\rho(\phi^{2}-1)^{2}}{4}+\frac{|\nabla \phi|^{2}}{2}\big) dx\\
		&\leqslant \left(\int_{\Omega}\big(\frac{\rho_0 u_0^{2}}{2}+\frac{\rho_0(\phi_0^{2}-1)^{2}}{4}+\frac{\lvert\nabla \phi_0\lvert^{2}}{2}\big) dx -\frac{a_0}{4}\int_{\Omega}\rho_0  dx\right)exp\big(-a_0t\big)+\frac{a_0}{4}\int_{\Omega}\rho_0  dx\\
		&\leqslant C\epsilon_0exp\big(-a_0t\big)+\frac{1}{4}a_0\int_{\Omega}\rho_0  dx.
	\end{split}
\end{align*}
Therefore, the proof of Theorem \ref{thm1.2} is completed.



\begin{thebibliography}{00}
	
	\bibitem{2009-AH}
	Abels, Helmut, \textit{Existence of weak solutions for a diffuse interface model for viscous, incompressible fluids with general densities},
	Comm. Math. Phys. 289, no.1, 45-73, 2009.
	
	
	\bibitem{Abels-2009}
	Abels, Helmut, \textit{On a diffuse interface model for tow-phase flows of viscous, incompressible fluids with matched densities}, Arch. Ration. Mech. Anal. 194, no.2, 463-506, 2009.
	
	
	\bibitem{2012-AH}
	Abels, Helmut, \textit{Strong well-posedness of a diffuse interface model for a viscous, quasi-incompressible two-phase flow}, SIAM J. Math. Anal. 44, no.1, 316-340, 2012.
	
	
	\bibitem{2013-AH15}
	Abels, Helmut; Depner, Daniel; Garcke, Harald, \textit{Existence of weak solutions for a diffuse interface model for two-phase flows of incompressible fluids with different densities}, J. Math. Fluid Mech. 15, no.3, 453-480, 2013.
	
	\bibitem{2008-FE}
	Abels, Helmut; Feireisl, Eduard, \textit{On a diffuse interface model for two-phase flow of compressible viscous fluids}, Indiana Univ. Math. J. 57, no.2, 659-698, 2008.
	
	
	\bibitem{1998-AMW}
	Anderson, D. M.; McFadden, G. B.; Wheeler, A. A., \textit{Diffuse-interface methods in fluid mechanics}, Annual review of fluid mechanics, Vol. 30, 139-165, 1998.
	
	
	\bibitem{Boyer-1999}
	Boyer, Franck, \textit{Mathematical study of multi-phase flow under shear through order parameter formulation}, Asymptot. Anal. 20, no.2, 175-212, 1999.
	
	\bibitem{Beal-1984}
	Beale, J. T.; Kato, T.; Majda, A., \textit{Remarks on the breakdown of smooth solutions for the 3D Euler equations}, Comm. Math. Phys. 94, no.1, 61-66, 1984.
	
	\bibitem{Cho-2004}
	Cho, Yonggeun; Choe, Hi Jun; Kim, Hyunseok, \textit{Unique solvability of the initial boundary value problems for compressible viscous fluid}, J. Math. Pures Appl. 83(9), no.2, 243-275, 2004.
	
	
	\bibitem{2019-CL}
	Cherfils, Laurence; Feireisl, Eduard; Mich\'{a}lek, Martin; Miranville, Alain; Petcu, Madalina; Pra\v{z}\'{a}k, Dalibor, \textit{The compressible Navier-Stokes-Cahn-Hilliard
		equations with dynamic boundary conditions}, Math. Models Methods Appl. Sci. 29, no.14, 2557-2584, 2019.
	
	
	
	\bibitem{Cao-2012}
	Cao, Chongsheng; Gal, Ciprian G., \textit{Global solutions for the 2D NS-CH model for a two-phase flow of viscous, incompressible fluids with mixed partial viscosity and mobility}, Nonlinearity 25, no.11, 3211-3234, 2012.		
	
	
	\bibitem{1958-Cahn}
	Cahn J. W.; Hilliard J. E., \textit{Free energy of a nonuniform system. I. Interfacial free energy}, The Journal of chemical physics, 28(2), 258-267, 1958.
	
	
	
	\bibitem{Chen-He-Mei-Shi-2018}
	Chen, Yazhou; He, Qiaolin; Mei, Ming; Shi, Xiaoding, \textit{Asymptotic stability of solutions for 1-D compressible Navier-Stokes-Cahn-Hilliard system}, J. Math. Anal. Appl. 467, no.1, 185-206, 2018.
	
	
	\bibitem{Choe-2003}
	Choe, Hi Jun; Jin, Bum Ja, \textit{Regularity of weak solutions of the compressible Navier-Stokes equations}, J. Korean Math. Soc. 40, no.6, 1031-1050, 2003.
	
	\bibitem{Chella-1996}
	Chella R.; Vinals J., \textit{Mixing of a two-phase fluid by a cavity flow}, Physical Review E 53(4), 3832-3840, 1996.
	
	
	\bibitem{Doi-1997}
	Doi M., \textit{Dynamics of domains and textures, Theoretical Challenges in the Dynamics of Complex Fluids}, NATO ASI Series E Applied Sciences-Advanced Study Institute 339, 293-314, 1997.
	
	\bibitem{Gunton-1983}
	Domb, C.; Lebowitz, J. L., \textit{Phase transitions and critical phenomena. Vol. 8}, Academic Press, Inc., London, 1983.
	
	
	\bibitem{2010PDE}
	Evans, Lawrence C., \textit{Partial differential equations}, Grad. Stud. Math. 19, 2nd edition, 2010.
	
	
	\bibitem{2004Dynamics}
	Feireisl, Eduard, \textit{Dynamics of viscous compressible fluids}, Oxford Lecture Ser. Math. Appl. 26, 2004.
	
	
	\bibitem{Fan-2008}
	Fan, Jishan; Jiang, Song, \textit{Blow-up criteria for the Navier-Stokes equations of compressible fluids}, J. Hyperbolic Differ. Equ. 5, no.1, 167-185, 2008.
	
	
	
	
	\bibitem{Gal-2010}
	Gal, Ciprian G.; Grasselli, Maurizio, \textit{Asymptotic behavior of a Cahn-Hilliard-Navier-Stokes in 2D}, Ann. Inst. Henri Poincar\'{e}, Anal. Non Lin\'{e}aire 27, no.1, 401-436, 2010.
	
	
	\bibitem{Gal-2016}
	Gal, C. G.; Grasselli, M.; Miranville, A., \textit{Cahn-Hilliard-Navier-Stokes systems with moving contact lines}, Calc. Var. Partial Differential Equations 55, no.3, Art. 50, 47 pp, 2016.
	
	
	\bibitem{Gui-2018}
	Gui, Guilong; Li, Zhenbang, \textit{Global well-posedness of the 2-D incompressible Navier-Stokes-Cahn-Hilliard system with a singular free energy density}, 	arXiv:1810.12705, 2018.
	
	\bibitem{Giorgini-2019}
	Giorgini, Andrea; Miranville, Alain; Temam, Roger, \textit{Uniqueness and regularity for the Navier-Stokes-Cahn-Hilliard system}, SIAM J. Math. Anal. 51, no.3, 2535-2574, 2019.
	
	
	
	\bibitem{Gurtin-1996}
	Gurtin, Morton E.; Polignone, Debra; Vi\~{n}als, Jorge, \textit{Two-phase binary fluids and immiscible fluids described by an
		order parameter}, Math. Models Methods Appl. Sci. 6, no.6, 815-831, 1996.
	
	\bibitem{Giorgini-2020}
	Giorgini, Andrea; Temam, Roger, \textit{Weak and strong solutions to the nonhomogeneous incompressible Navier-Stokes-Cahn-Hilliard system}, J. Math. Pures Appl. 144(9), 194-249, 2020.
	
	
	\bibitem{Huang-2009}
	Huang, Xiangdi, \textit{Some results on blowup of solutions to the compressible Navier-Stokes equations}, Ph.D Thesis. The Chinese University of Hong Kong, ProQuest LLC, Ann Arbor, MI, 98 pp, 2009.
	
	\bibitem{Heida-2013}
	Heida, Martin, \textit{On the derivation of thermodynamically consistent boundary conditions for the Cahn-Hilliard-Navier-Stokes system}, Internat. J. Engrg. Sci. 62, 126-156, 2013.
	
	\bibitem{He-2021}
	He, Jingning, \textit{Global weak solutions to a Navier-Stokes-Cahn-Hilliard system with chemotaxis and
		singular potential}, Nonlinearity34, no.4, 2155-2190, 2021.
	
	\bibitem{Huang-2009-1}
	Huang, Xiangdi; Li, Jing, \textit{On breakdown of solutions to the full compressible Navier-Stokes equations}, Methods Appl. Anal. 16, no.4, 479-490, 2009.
	
	\bibitem{Huang-2011}
	Huang, Xiangdi; Li, Jing; Xin, Zhouping, \textit{Blowup criterion for the compressible flows with vacuum states}, Comm. Math. Phys. 301, no.1, 23-35, 2011.
	
	
	
	\bibitem{Heida-2012}
	Heida, Martin; M\'{a}lek, Josef; Rajagopal, K. R., \textit{On the development and generalizations of Cahn-Hilliard equations
		within a thermodynamic framework}, Z. Angew. Math. Phys. 63, no.1, 145-169, 2012.
	
	
	
	\bibitem{He-Wu-2021}
	He, Jingning; Wu, Hao, \textit{Global well-posedness of a Navier-Stokes-Cahn-Hilliard system with chemotaxis
		and singular potential in 2D}, J. Differential Equations 297, 47-80, 2021.
	
	
	
	
	
	
	\bibitem{Huang-2010}
	Huang, XiangDi; Xin, ZhouPing, \textit{A blow-up criterion for classical solutions to the compressible Navier-Stokes equations}, Sci. China Math. 53, no.3, 671-686, 2010.
	
	
	
	\bibitem{1968-Landau}
	Landau, L. D.; Lifshitz, E. M., \textit{Course of Theoretical Physics, Statistical Physics, Vol. 5: Statistical physics}, Pergamon Press, Oxford-Edinburgh-New York, 1968.
	
	
	\bibitem{1968Linear}
	Lady\v{z}enskaja, O. A.; Solonnikov, V. A.; Ura$\text{l}^{\prime}$ceva, N. N., \textit{Linear and Quasi-linear Equations of Parabolic Type}, American Mathematical Society, Providence, RI, 1968.
	
	
	\bibitem{Lam-Wu-2018}
	Lam, Kei Fong; Wu, Hao, \textit{Thermodynamically consistent Navier-Stokes-Cahn-Hilliard models with mass
		transfer and chemotaxis}, European J. Appl. Math. 29, no.4, 595-644, 2018.
	
	\bibitem{Qiu-Tang-Wang-2023}
	Qiu, Zhaoyang; Tang, Yanbin; Wang, Huaqiao, \textit{Well-posedness for the Cahn-Hilliard-Navier-Stokes equaiton with random initial data}, Potential Anal. 59, no.2, 753-770, 2023.
	
	
	\bibitem{Sun-Zhang-2018}
	Sun, Yongzhong; Zhang, Zhifei, \textit{Blow-up criteria of strong solutions and conditional regularity of weak solutions for the compressible Navier-Stokes equations}. Handbook of mathematical analysis in mechanics of viscous fluids, 2263-2324, 2018.
	
	
	
\end{thebibliography}

\section*{Acknowledgement}
The work of Li Fang was supported in part by the National Natural Science Foundation of China (Grant no. 11501445). The work of Zhenhua Guo was supported in part by the National Natural Science Foundation of China (Grant no. 11931013).

\end{document}